\title{Aspects of Predicative Algebraic Set Theory I: Exact Completion (DRAFT)}
\author{Benno van den Berg \& Ieke Moerdijk}
\date{September 24, 2007}
\begin{document}
\maketitle

%Warning for the referee: the text is still subject to minor changes.

\section{Introduction}

This is the first in a series of three papers on Algebraic Set Theory.
Its main purpose is to lay the necessary groundwork for the next two
parts, one on Realisability \cite{bergmoerdijk07c} and the other on Sheaf Models in Algebraic Set Theory \cite{bergmoerdijk07d}.

Sheaf theory and realisability have been effective methods for constructing
models of various constructive and intuitionistic type theories \cite{hyland82, lambekscott86,sheaves753}. In particular, toposes constructed using sheaves or realisability provide models for intuitionistic higher order logic ({\bf HAH}), and it was shown by Freyd, Fourman, Friedman respectively by McCarthy in the 1980s that from these toposes one can construct models of intuitionistic Zermelo-Fraenkel set theory {\bf IZF} \cite{freyd80, fourman80, friedman73, mccarty84}. These constructions were non-elementary, in the technical sense that they used the class of all ordinal numbers external to the topos, i.e., ordinals in an ambient classical metatheory. The original purpose of Algebraic Set Theory \cite{joyalmoerdijk95} was to provide an elementary, categorical framework making such constructions of models of {\bf IZF} possible. More precisely, in \emph{loc.~cit.}~the authors proposed a notion of ``category with small maps'', which is a pair consisting of a category \ct{E} which behaves to some extent like a topos, and a class \smallmap{S} of arrows in \ct{E}, the ``small maps'', to be thought of as maps whose fibres are small in some \emph{a priori} given sense. It was proved that such a pair $(\ct{E}, \smallmap{S})$ always contains a special object $V$ (an initial ZF-algebra in the terminology of \cite{joyalmoerdijk95}), which is a model of {\bf IZF}. Although this was never proved in detail, the idea behind the definition of such pairs $(\ct{E}, \smallmap{S})$ was that they would be closed under sheaves and realisability. For example, for sheaves, this means that for any internal small site \ct{C} in $(\ct{E}, \smallmap{S})$, the category $\shct{\ct{E}}{\ct{C}}$ of internal sheaves is equipped with a natural class of maps $\smallmap{S}[\ct{C}]$,
for which the pair $(\shct{\ct{E}}{\ct{C}}, \smallmap{S}[\ct{C}])$ again satisfies the axioms for a ``category with small maps''. As a consequence, one would be able to apply and iterate sheaf and/or realisability constructions to obtain new categories with small maps from old ones, each of which contains a model of set theory $V$. The original constructions of Freyd, Fourman and McCarthy \cite{freyd80, fourman80, friedman73, mccarty84, kouwenhovenvanoosten05} form a special case of this. An immediate result would be that known independence proofs for {\bf HAH}, proved using topos-theoretic
techniques, can be transferred to {\bf IZF} (for example, \cite{fourmanhyland79,blassscedrov89,fourmanscedrov82}).

Subsequently, various alternative axiomatisations of categories with
small maps have been proposed, notably the one by Awodey, Butz, Simpson and Streicher \cite{awodeyetal04}. In particular, Simpson in \cite{simpson99} proves that {\bf IZF} is complete with respect to models in his axiomatisation of a category with small maps.

The main goal of this series of three papers is to investigate how these
techniques apply in the context of predicative type theories in the
style of Martin-L\"of \cite{martinlof84} and related predicative set theories such as Aczel's {\bf CZF} \cite{aczel78,aczelrathjen01}. A distinguishing feature of these type theories is that they do not allow power object constructions, but do contain inductive types (so-called ``W-types'') instead.  In analogy with the non-predicative case, we aim to find axioms for a suitable notion of ``category with a class of small maps'' $(\ct{E},\smallmap{S})$ where the category \ct{E} is some sort of predicative analogue of a topos, having equally good closure properties as in the impredicative case. In particular, the following should hold:
\begin{itemize}
\item[(i)] Any such pair $(\ct{E},\smallmap{S})$ contains an object $V$ which models {\bf CZF}.
\item[(ii)] The notion is closed under taking sheaves; i.e., for a internal site
\ct{C} (possibly satisfying some smallness conditions), the category of
internal sheaves in \ct{E} contains a class of small maps, so that we obtain a
similar such pair $(\shct{\ct{E}}{\ct{C}}, \smallmap{S}[\ct{C}])$.
\item[(iii)] The notion is closed under realisability: i.e., for any partial
combinatorial algebra \pca{A} in \ct{E}, one can construct a category $\Eff_{\ct{E}}[\pca{A}]$ of \pca{A}-effective objects (analogous to the effective topos \cite{hyland82}), and a corresponding class of small maps $\smallmap{S}[\pca{A}]$, so that the pair $(\Eff_{\ct{E}}[\pca{A}],\smallmap{S}[\pca{A}])$ again satisfies the axioms.
\item[(iv)] The notion admits a completeness theorem for {\bf CZF}, analogous to the one for {\bf IZF} mentioned above.
\end{itemize}
This list describes our goals for this series of papers, but is not
exhaustive. There are other constructions that are known to have useful
applications in the impredicative context of topos theory, {\bf HAH} and {\bf IZF}, which one might ask our predicative notion of categories with small maps
to be closed under, such as glueing and the construction of the category of
coalgebras for a (suitable) comonad \cite{wraith74,freydfriedmanscedrov87,lambekscott86}.

To reach these goals, one needs the category \ct{E} to have some exactness
properties, in particular to be closed under quotients of certain
equivalence relations. Indeed, some particular such quotients are needed
in (i) above to construct the model $V$ as a quotient of a certain universal
W-type, and in (ii) to construct the associated sheaf functor. On the
other hand, the known methods of proof to achieve the goals (iii) and (iv)
naturally give rise to pairs $(\ct{E}, \smallmap{S})$ for which \ct{E} is not sufficiently exact. In order to overcome this difficulty, we identify the precise degree of exactness which is needed, and prove that for the kinds of categories with a class of small maps $(\ct{E}, \smallmap{S})$ which arise in (iii) and (iv), one can construct a good ``exact completion''  $(\overline{\ct{E}}, \overline{\smallmap{S}})$.  The first of these three
papers is mainly concerned with analysing this exact completion.

To illustrate the work involved, let us consider the axiom of Subset
collection of {\bf CZF}, which can be formulated in the tradional form
\begin{description}
\item[Subset collection:] $\exists c \, \forall z \, ( \forall x \epsilon a \, \exists y \epsilon b \, \phi(x,y,z) \rightarrow \exists d \epsilon c \, \mbox{B}(x \epsilon a, y \epsilon d) \, \phi(x, y, z)) $.
\end{description}
or in terms of multi-valued functions as what is called the Fullness axiom (see Section 3.7 below)
\begin{description}
\item[Fullness:] $\exists z\;\! (z \;\!\subseteq \;\!{\bf mvf}(a,b) \land \forall x \;\!\epsilon \;\!{\bf mvf}(a,b) \;\! \exists c \;\!\epsilon \;\!z \;\! (c \;\!\subseteq \;\!x))$,
\end{description}
where we have used the abbreviation ${\bf mvf}(a, b)$ for the class of multi-valued functions from $a$ to $b$, i.e., sets $r \subseteq a \times b$ such that $\forall x \epsilon a \, \exists y \epsilon b \, (x, y) \epsilon r$.
This Fullness axiom  has a categorical counterpart {\bf (F)}. This latter axiom is one of the axioms for our pairs $(\ct{E},\smallmap{S})$, for which we prove the following:
\begin{itemize}
\item[(a)] If $(\ct{E},\smallmap{S})$ satisfies {\bf (F)}, then the model $V$ constructed as in (i) satisfies Subset collection (see \refcoro{complforCZF} below).
\item[(b)] If $(\ct{E},\smallmap{S})$ satisfies {\bf (F)}, then so does its exact completion $(\overline{\ct{E}}, \overline{\smallmap{S}})$
(see \refprop{presofFullness} below).
\item[(c)] If $(\ct{E},\smallmap{S})$ satisfies {\bf (F)}, then so does the associated pair $(\Eff_{\ct{E}}[\pca{A}],\smallmap{S}[\pca{A}])$ defined by realisability (this willed be proved in \cite{bergmoerdijk07c}).
\item[(d)] If $(\ct{E},\smallmap{S})$ satisfies {\bf (F)}, then so does the associated pair $(\shct{\ct{E}}{\ct{C}}, \smallmap{S}[\ct{C}])$ defined by the sheaves (this willed be proved in \cite{bergmoerdijk07d}).
\end{itemize}
Of these, statement (a) is easy to prove, but the proofs of the other
three statements are non-trivial and technically rather involved, as
we will see.

This series of papers is not the first to make an attempt at satisfying
these goals. In particular, the authors of \cite{moerdijkpalmgren00} provided a suitable categorical treatment of inductive types, and used these in \cite{moerdijkpalmgren02} in an attempt to find a notion of ``predicative topos equipped with a class of small maps'' for which (i) and (ii) could be proved. The answer they gave, in terms of stratified pseudo-toposes,  was somewhat unsatisfactory in various ways: it used the categorical analogue of an infinite sequence of ``universes'', and involved a strengthening of {\bf CZF} by the axiom {\bf AMC} of ``multiple choice''. This was later improved upon by \cite{berg05a}, who established results along the lines of aim (ii) without using {\bf AMC}, but still involved universes. Awodey and Warren, in
\cite{awodeywarren05}, gave a much weaker axiomatisation of a ``predicative topos equipped with a class of small maps'', which didn't involve W-types, but for which they proved a completeness result along the lines of (iv). Gambino in \cite{gambino05} also proved a completeness theorem, and showed that unpublished work of Scott on presheaf models for set theory could be recovered in the context of Algebraic Set Theory. Later in \cite{gambino07}, he took a first step towards (ii) by showing the possibility of constructing the associated sheaf functor in a weak metatheory. In \cite{warren07}, Warren shows the stability of various axioms under coalgebras for a cartesian comonad.

To conclude this introduction, we will describe in more detail the
contents of this paper and its two sequels.

We begin this paper by making explicit the notion of ``category \ct{E} with a
class \smallmap{S} of small maps''.  Our axiomatisation, presented in Section 2, is based on various earlier such notions in the literature, in particular
the one in \cite{joyalmoerdijk95}, but is different from all of them. In particular, like the one in \cite{moerdijkpalmgren02}, our axiomatisation is meant to apply in the predicative context as well, but has a rather different flavour: unlike \cite{moerdijkpalmgren02}, we assume all diagonals
to be small, work with a weaker version of the representability
axiom, assume the Quotients axiom and work with Fullness instead of {\bf AMC}. In the same section 2, we will also introduce the somewhat weaker notion of a class \smallmap{S} of ``display maps'', and prove that any such class can be completed to a class $\smallmap{S}^{\rm cov}$ which satisfies all our axioms for small maps. In Section 3, we will consider various additional axioms which a class of small maps might satisfy. These additional requirements are all motivated by the axioms of set theories such as {\bf IZF} and {\bf CZF} (cf.~Appendix A for the axioms of {\bf IZF} and {\bf CZF}). Examples are the categorical Fullness axiom {\bf (F)} already mentioned above, and the axioms {\bf (WE)} and {\bf (WS)} which express that certain inductive W-types exist, respectively exist and are small. The core of the paper is formed by Sections 4--6, where we discuss exact completion. In Section 4, we will introduce a notion of exactness for categories with small maps $(\ct{E},\smallmap{S})$, essentially expressing that \ct{E} is closed under quotients by ``small'' equivalence relations. In Section 5, we use the familiar exact completion of regular categories \cite{menni00} to prove that any such pair $(\ct{E},\smallmap{S})$ possesses an exact completion $(\overline{\ct{E}}, \overline{\smallmap{S}})$. In Section  6, we then prove that the additional axioms for classes of small maps, such as Fullness and the existence of W-types, are preserved by exact completion. Some of the proofs in this section are quite
involved, and probably constitute the main new technical contribution
to Algebraic Set Theory contained in this paper. In Sections 7 and 8, we
return to the constructive set theories {\bf IZF} and {\bf CZF}, and show that our theory of exact pair $(\ct{E}, \smallmap{S)}$ of categories with small
maps provides a sound and complete semantics for these set theories. In
particular, in these two sections we achieve goals (i) and (iv) listed
above.

All the notions and results discussed in the present paper will be used
in the second and third papers in this series \cite{bergmoerdijk07c,bergmoerdijk07d}, where we will address realisability and sheaves. In the second paper, we will construct for any category with small maps $(\ct{E},\smallmap{S})$ a new category $\Asm_{\ct{E}}[\pca{A}]$ of assemblies equipped with a class of display maps $\smallmap{D}[\pca{A}]$. For this pair, we will show that its exact completion again satifies all our axioms for small maps. The model of set theory contained in this exact completion is a realisability model for constructive set theory {\bf CZF}, which coincides with the one by Rathjen in \cite{rathjen06}. We also plan to explain how a model construction by Streicher \cite{streicher05} and Lubarsky \cite{lubarsky06} fits into our framework.

The third paper will then address presheaf and sheaf models. First of all, we extend the work by Gambino in \cite{gambino05} to cover presheaf models for {\bf CZF}. Furthermore, for any category with small maps $(\ct{E}, \smallmap{S})$ and internal site \ct{C}, satisfying appropriate smallness conditions, we will define a class of small maps $\smallmap{S}[\ct{C}]$ in the category of internal sheaves in \ct{E}, resulting in a pair $(\shct{\ct{E}}{\ct{C}}, \smallmap{S}[\ct{C}])$. The validity of additional axioms for small maps is preserved through the construction, and, as a consequence, we obtain a theory of sheaf models for {\bf CZF} (extending the work in \cite{gambino06} on Heyting-valued models).

Acknowledgements: Throughout our work on the subject of this paper and its two sequels, we have been helped by discussions with many colleagues. In particular, we would like to mention Steve Awodey, Nicola Gambino, Per Martin-L\"of, Jaap van Oosten, Erik Palmgren, Michael Rathjen and Thomas Streicher. Last but not least, we would like to thank the editors for their patience.

\part*{The categorical setting}

The contents of this part of the paper are as follows. We first present the basic categorical framework for studying models of set theory in Section 2: a category with small maps. We give the axioms for a class of small maps, and also present the weaker notion of a class of display maps, and show how it generates a class of small maps. This will become relevant in our subsequent work on realisability. In Section 3 we will present additional axioms for a class of small maps, allowing use to model the set theories {\bf IZF} and {\bf CZF}. 

Throughout the entire paper, we will work in a positive Heyting category \ct{E}. For the definition of a positive Heyting category, and that of other categorical terminology, the reader is referred to Appendix B.

\section{Categories with small maps}

The categories we use to construct models of set theory we will call \emph{categories with small maps}. These are positive Heyting categories \ct{E} equipped with a class of maps \smallmap{S} satisfying certain axioms. The intuitive idea is that the objects in the positive Heyting category \ct{E} are \emph{classes}, and the maps \func{f}{B}{A} in \smallmap{S} are those class maps all whose fibres $B_a = f^{-1}(a)$ for $a \in A$ are ``small'', i.e., \emph{sets} in some (possibly rather weak) set theory . For this reason, we call the class \smallmap{S} a \emph{class of small maps}. So a map \func{f}{B}{A} belonging to such a class \smallmap{S} is an $A$-indexed family $(B_a)_{a \in A}$ of small subobjects of $B$.

\subsection{Classes of small maps}

We introduce the notion of a class of small maps.

\begin{defi}{localfullsubc}
A class of morphisms \smallmap{S} in a positive Heyting category \ct{E} will be called a \emph{locally full subcategory}, when it satisfies the following axioms:
\begin{description}
\item[(L1)] (Pullback stability) In any pullback square
\diag{ D \ar[d]_g \ar[r] & B \ar[d]^f \\
C \ar[r] & A }
where $f \in \smallmap{S}$, also $g \in \smallmap{S}$.
\item[(L2)] (Sums) Whenever $X \rTo Y$ and $X'\rTo Y'$ belong to \smallmap{S}, so does $X + X' \rTo Y + Y'$.
\item[(L3)] (Local Fullness) For a commuting triangle
\diag{ Z \ar[dr]_h \ar[rr]^f & & Y \ar[dl]^g \\
& X &  }
where $g \in \smallmap{S}$, one has $f \in \smallmap{S}$ iff $h \in \smallmap{S}$.
\end{description}
\end{defi}

When a locally full subcategory \smallmap{S} has been fixed together with an object $X \in \ct{E}$, we write $\smallmap{S}_X$ for the full subcategory of $\ct{E}/X$ whose objects are morphisms $A \rTo X \in \smallmap{S}$.

\begin{defi}{localHeytingcat}
A locally full subcategory \smallmap{S} will be called a \emph{locally full positive Heyting subcategory}, when every $\smallmap{S}_X$ is a positive Heyting category and the inclusion $\smallmap{S}_X \rTo \ct{E}/X$ preserves this structure.
\end{defi}

To complete the definition a class of small maps, we introduce the notion of a covering square.

\begin{defi}{coveringsquare}
A diagram
\diag{ A \ar[d]_f \ar[r] &  B \ar[d]^g \\
C \ar[r]_p & D}
is called a \emph{quasi-pullback}, when the canonical map $A \rTo B \times_D C$ is a cover. If $p$ is also a cover, the diagram will be called a \emph{covering square}. When $f$ and $g$ fit into a covering square as shown, we say that $f$ \emph{covers} $g$, or that $g$ \emph{is covered by} $f$.
\end{defi}

\begin{lemm}{covsqpasting} In a positive Heyting category \ct{E},
\begin{enumerate}
\item covering squares are stable under pullback. More explicitly, pulling back a covering square of the form
\diag{ A \ar[d] \ar@{->>}[r] &  B \ar[d] \\
C \ar@{->>}[r] & D}
along a map \func{p}{E}{D} results in a covering square of the form
\diag{ p^*A \ar[d] \ar@{->>}[r] &  p^*B \ar[d] \\
p^*C \ar@{->>}[r] & E.}
\item the juxtaposition of two covering squares as in the diagram below is again a covering square. 
\diag{ A \ar[d]_f \ar@{->>}[r] &  B \ar[d]_g \ar@{->>}[r] & C \ar[d]^h \\
X \ar@{->>}[r] & Y \ar@{->>}[r] & Z}
So, when $f$ covers $g$ and $g$ covers $h$, $f$ covers $h$. 
\item the sum of two covering squares is a covering square. More explicitly, when both
\begin{displaymath}
\begin{array}{ccc}
\xymatrix{ A_0 \ar[d]_{f_0} \ar@{->>}[r] &  B_0 \ar[d]^{g_0} \\
C_0 \ar@{->>}[r] & D_0}
& \mbox{and} &
\xymatrix{ A_1 \ar[d]_{f_1} \ar@{->>}[r] &  B_1 \ar[d]^{g_1} \\
C_1 \ar@{->>}[r] & D_1}
\end{array}
\end{displaymath}
are covering squares, then so is
\diag{ A_0 + A_1 \ar[d]_{f_0 + f_1} \ar@{->>}[r] &  B_0 + B_1 \ar[d]^{g_0 + g_1} \\
C_0 + C_1 \ar@{->>}[r] & D_0 + D_1.}
Therefore, if $f_0$ covers $g_0$ and $f_1$ covers $g_1$, then $f_0 + f_1$ covers $g_0 + g_1$.
\end{enumerate}
\end{lemm}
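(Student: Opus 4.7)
The plan is to reduce all three statements to a handful of standard facts about positive Heyting categories: covers are stable under pullback and closed under composition, and finite coproducts are stable (pullback distributes over coproduct) and disjoint, so that in particular the coproduct of two covers is again a cover. Each of (1), (2), (3) then amounts to identifying the canonical map appearing in the quasi-pullback condition with a composite or pullback of arrows already known to be covers.

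For (1), the bottom arrow $p^*C \to E$ is a cover as the pullback of the cover $C \to D$. For the top edge, an application of the pullback pasting lemma identifies $p^*B \times_E p^*C$ with the pullback of $B \times_D C$ along $p$; under this identification the canonical arrow $p^*A \to p^*B \times_E p^*C$ is precisely the pullback along $p$ of the original canonical arrow $A \to B \times_D C$, and hence a cover.

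For (2), the bottom composite $X \to Y \to Z$ is a cover as a composite of covers. For the top edge, I would factor $A \to X \times_Z C$ through $X \times_Y B$. The first arrow $A \to X \times_Y B$ is a cover by the hypothesis on the left square. The second arrow $X \times_Y B \to X \times_Z C$ arises by pulling back the cover $B \to Y \times_Z C$ (from the right square) along $X \to Y$; since $X \times_Y (Y \times_Z C) \cong X \times_Z C$ by pullback pasting, this produces a cover $X \times_Y B \to X \times_Z C$. The composite is then a cover.

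For (3), I would use stability and disjointness of finite sums to obtain a canonical isomorphism $(B_0+B_1) \times_{D_0+D_1} (C_0+C_1) \cong (B_0 \times_{D_0} C_0) + (B_1 \times_{D_1} C_1)$. Under this identification, the canonical map out of $A_0 + A_1$ becomes the sum of the two canonical arrows $A_i \to B_i \times_{D_i} C_i$, both of which are covers by hypothesis, and the sum of two covers is a cover. Similarly, the bottom arrow $C_0+C_1 \to D_0+D_1$ is a sum of covers and hence a cover. The only really delicate step is justifying the distributivity isomorphism, which rests squarely on the hypothesis that \ct{E} is positive; once this is in place, no substantial obstacle remains.
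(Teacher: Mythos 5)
Your proof is correct and matches the paper's (unstated) intent: the paper simply declares all three parts to be straightforward consequences of regularity (and positivity), and your argument supplies exactly the standard details one would expect -- stability and composability of covers, pasting of pullbacks, and extensivity for the coproduct decomposition in part (3). No discrepancy to report.
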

\begin{proof}
All straightforward consequences of the regularity of \ct{E}.
\end{proof}

\begin{defi}{classofsmallmaps}
A locally full positive Heyting subcategory \smallmap{S} is a \emph{class of small maps} when it satisfies the following two axioms:
\begin{description}
\item (Collection) Any two arrows \func{p}{Y}{X} and \func{f}{X}{A} where $p$ is a cover and $f$ belongs to \smallmap{S} fit into a covering square
\diag{ Z \ar[d]_g \ar[r] & Y \ar@{->>}[r]^p & X \ar[d]^f \\
B \ar@{->>}[rr]_h & & A}
where $g$ belongs to \smallmap{S}.
\item (Covered maps) When an arbitrary map $g$ is covered by a map $f \in \smallmap{S}$, also $g \in \smallmap{S}$.
\end{description}
\end{defi}

\begin{defi}{catwsmallmaps}
A pair $(\ct{E}, \smallmap{S})$, in which \ct{E} is a positive Heyting category and \smallmap{S} a class of small maps, will be called a \emph{category with small maps}. A \emph{morphism of categories with small maps} \func{F}{(\ct{E}, \smallmap{S})}{(\ct{F}, \smallmap{T})} is a functor $F$ that preserves the positive Heyting structure and sends maps in \smallmap{S} to maps in \smallmap{T}.
\end{defi}

\begin{rema}{infexample}
There is one informal example of a category with small maps that the reader should try to keep in mind. Let \ct{E} be the category of classes and let \smallmap{S} consist of those class morphisms all whose fibres are sets. The notions of class and set here can be understood in some intuitive sense, or can be made precise by a formal set theory like {\bf IZF} or {\bf CZF}. It is not too hard to see that this is indeed an example. We will flesh out this informal example in two different ways in Section 8.
\end{rema}

\begin{rema}{stabunderslicing}
An essential fact about categories with small maps is their stability under slicing. By this we mean that for any category with small maps $(\ct{E}, \smallmap{S})$ and object $X$ in \ct{E}, the pair $(\ct{E}/X, \smallmap{S}/X)$, with $\smallmap{S}/X$ being defined by
\[ f \in \smallmap{S}/X \Leftrightarrow \Sigma_X f \in \smallmap{S}, \] 
is again a category with small maps. The verification of this claim is straightforward and omitted. 

Strengthened versions of a category with small maps obtained by imposing more requirements on the class of small maps should also be stable under slicing in this sense. Therefore, when we introduce additional axioms for a class of small maps \smallmap{S} in a category \ct{E}, their validity should be inherited by the classes of small maps $\smallmap{S}/X$ in  $\ct{E}/X$. This will indeed be the case, but we will not point this out explicitly everytime we introduce an axiom, and a proof of its stability under slicing will typically be left to the reader.
\end{rema}

When a class of small maps \smallmap{S} in a positive Heyting category \ct{E} has been fixed, we refer to the morphisms in \smallmap{S} as the \emph{small maps}. Objects $X$ for which the unique map $X \rTo 1$ is small, will be called \emph{small}. Furthermore, a subobject $A \subseteq X$ represented by a monomorphism $A \rTo X$ belonging to $\smallmap{S}$ will be called \emph{bounded}. 

\begin{rema}{boundedsepforlocHeytcat} Throughout the paper, we will make use of the following internal form of ``bounded separation''. If $\phi(x)$ is a formula in the internal logic of \ct{E} with free variable $x \in X$, all whose basic predicates are bounded, and contains existential and universal quantifications $\exists_f$ and $\forall_f$ along small maps $f$ only, then
\[ A = \{ x \in X \, : \, \phi(x) \} \subseteq X \]
defines a bounded subobject of $X$. In particular, smallness of $X$ implies smallness of $A$. This is an immediate consequence of the fact that a class of small maps is a locally full positive Heyting subcategory.
\end{rema}

It will be convenient to also have a less comprehensive and more elementary axiomatisation of the notion of a class of small maps available, as provided by the next proposition. It will also facilitate the comparison with other definitions of a class of small maps to be found in the literature (cf.~\refrema{compwithothsettings} below).

\begin{prop}{altcharsmallmaps}
A class of maps \smallmap{S} in a positive Heyting category \ct{E} is a class of small maps iff it satisfies the following axioms:
\begin{description}
\item[(A1)] (Pullback stability) In any pullback square
\diag{ D \ar[d]_g \ar[r] & B \ar[d]^f \\
C \ar[r]_p & A }
where $f \in \smallmap{S}$, also $g \in \smallmap{S}$.
\item[(A2)] (Descent) If in a pullback square as above $p$ is a cover and $g \in \smallmap{S}$, then also $f \in \smallmap{S}$.
\item[(A3)] (Sums) Whenever $X \rTo Y$ and $X'\rTo Y'$ belong to \smallmap{S}, so does $X + X' \rTo Y + Y'$.
\item[(A4)] (Finiteness) The maps $0 \rTo 1, 1 \rTo 1$ and $1+1 \rTo 1$ belong to \smallmap{S}.
\item[(A5)] (Composition) $\smallmap{S}$ is closed under composition.
\item[(A6)] (Quotients) In a commuting triangle
\diag{ Z \ar[dr]_h \ar@{->>}[rr]^f & & Y \ar[dl]^g \\
& X, &  }
if $f$ is a cover and $h$ belongs to \smallmap{S}, then so does $g$.
\item[(A7)] (Collection) Any two arrows \func{p}{Y}{X} and \func{f}{X}{A} where $p$ is a cover and $f$ belongs to \smallmap{S} fit into a covering square
\diag{ Z \ar[d]_g \ar[r] & Y \ar@{->>}[r]^p & X \ar[d]^f \\
B \ar@{->>}[rr]_h & & A,}
where $g$ belongs to \smallmap{S}.
\item[(A8)] (Heyting) For any morphism \func{f}{Y}{X} belonging to \smallmap{S}, the right adjoint
\[ \func{\forall_f}{{\rm Sub}(Y)}{{\rm Sub}(X)} \]
sends bounded subobjects to bounded subobjects.
\item[(A9)] (Diagonals) All diagonals \func{\Delta_X}{X}{X \times X} belong to \smallmap{S}.
\end{description}
\end{prop}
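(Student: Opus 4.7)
The proof is a translation between the two axiomatisations: one verifies (A1)--(A9) from Definitions \ref{localfullsubc}--\ref{classofsmallmaps} in one direction, and reconstructs local fullness, the positive Heyting subcategory structure, Collection, and Covered maps from (A1)--(A9) in the other.

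For the forward direction most correspondences are immediate: (A1) is (L1), (A3) is (L2), and (A7) is Collection. (A2) follows from Covered maps, because a pullback square whose bottom edge is a cover is automatically a covering square, the comparison into the pullback being an isomorphism. (A4) is read off from the terminal, initial, and binary coproduct of the positive Heyting subcategory $\smallmap{S}_1$. (A5) is the instance of (L3) with both legs of the triangle already small. (A6) is obtained by extending the given triangle $Z \twoheadrightarrow Y \to X$ to a covering square with $\mathrm{id}_X$ as bottom edge, and then invoking Covered maps. (A8) follows because the inclusion $\smallmap{S}_X \hookrightarrow \ct{E}/X$ preserves the Heyting structure, so $\forall_f$ computed inside $\smallmap{S}_X$ automatically remains there. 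For (A9) one uses closure of $\smallmap{S}_X$ under equalizers (preserved by the inclusion) to identify the diagonal as an equalizer living in a suitable slice.

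For the reverse direction, (L1), (L2), and Collection are respectively (A1), (A3), and (A7). Covered maps is obtained as follows: given a covering square with $f : A \to C$ on the left small, the pullback $B \times_D C \to C$ of $g$ along the bottom cover is small by (A6) (applied to the factorization of $f$ through the cover $A \to B \times_D C$), and then $g$ is small by (A2). The technical heart is (L3). Given a triangle $Z \to Y \to X$ with $g : Y \to X$ small and $h : Z \to X$ small, I factor $f : Z \to Y$ through its graph $\langle \mathrm{id}_Z, f\rangle : Z \to Z \times_X Y$ followed by $\pi_2 : Z \times_X Y \to Y$: the projection $\pi_2$ is a pullback of $h$ along $g$ and hence small by (A1), while the graph is a pullback of $\Delta_Y : Y \to Y \times Y$ along $(f\pi_1, \pi_2) : Z \times_X Y \to Y \times Y$ and so small by (A1) together with (A9). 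Composition via (A5) then yields $f$ small. Finally, the positive Heyting structure of $\smallmap{S}_X$ is assembled stepwise: the terminal $\mathrm{id}_X$ and initial $0 \to X$ from pullbacks of (A4) via (A1); binary sums from (A3) composed with the codiagonal (itself a pullback of $1+1 \to 1$); products and pullbacks from (A1) and (A5); equalizers from (A9) and (A1); images from (A6); and the Heyting quantifier from (A8). Preservation by the inclusion is automatic.

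The main obstacle is the interlocking role of (A9). On the reverse side it is structurally essential: without it the graph factorization for (L3) collapses and the equalizers needed to give $\smallmap{S}_X$ its Heyting structure cannot be produced. On the forward side, extracting (A9) from (L1)--(L3) and the Heyting subcategory condition is itself the deepest step, since the diagonal $\Delta_X$ of a non-small $X$ does not lie within direct reach of (L3); the argument must exploit the preservation of the equality predicate by the Heyting structure of the slice $\smallmap{S}_{X \times X}$.
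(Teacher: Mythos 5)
Your proposal follows the paper's own argument very closely. The forward direction matches item by item: (A2) and (A6) by recognising the relevant squares as covering squares and invoking closure under covered maps, (A4) from the lextensive structure of $\smallmap{S}_1$, (A5) as one direction of (L3), (A8) from preservation of the Heyting structure of $\smallmap{S}_X$. The reverse direction is also the paper's: your graph factorisation $Z \to Z\times_X Y \to Y$, with the graph exhibited as a pullback of $\Delta_Y$ so that (A9) and (A1) apply, is precisely \reflemm{composlemma}, and your assembly of the positive Heyting structure of $\smallmap{S}_X$ (terminal and initial object from (A4) and (A1), sums via the codiagonal, pullbacks from (A1) and (A5), images from (A6), quantifiers from (A8)) is the same as in the paper.

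The one step that does not go through as you describe it is the forward direction of (A9). The equalizers which the finite-limit structure of a slice $\smallmap{S}_W$ forces into $\smallmap{S}$ are equalizers of parallel pairs between objects that are \emph{already} small over $W$; correspondingly, the only ``equality predicate'' available in $\smallmap{S}_W$ is the relative diagonal $A \to A\times_W A$ of a map $a : A \to W$ already in $\smallmap{S}$. Specialising to the absolute diagonal $\Delta_X$ (the relative diagonal of $X \to 1$) therefore requires $X$ itself to be small, and in the slice $\smallmap{S}_{X\times X}$ that you point to, the equality predicate of the terminal object $(X\times X, \id)$ degenerates to an identity, not to $\Delta_X$. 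So there is no ``suitable slice'' of the kind you invoke when $X$ is not small. Note that the paper does not derive (A9) either: it simply lists it among the axioms that hold ``by definition'', and in the parallel notion of a class of display maps (\refdefi{displaymaps}) the Diagonal axiom is imposed explicitly as an axiom. The smallness of all diagonals is thus best read as part of the definitional package rather than as a consequence of the slice structure; your attempt to extract it from (L1)--(L3) and the Heyting condition is the one place where the proposal, as written, breaks down.
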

\begin{proof}
Axioms {\bf (A1, 3, 5, 7, 9)} hold for any class of small maps by definition. Axioms {\bf (A2)} and {\bf (A6)} are equivalent to saying that \smallmap{S} is closed under covered maps. {\bf (A4)} holds because $\smallmap{S}_1$ is a lextensive category, and the inclusion in \ct{E} preserves this, while {\bf (A8)} holds because every $\smallmap{S}_X$ is Heyting, and the inclusion in $\ct{E}/X$ preserves this.

Conversely, let \smallmap{S} is a class of maps satisfying {\bf (A1-9)}. It will follow from the lemma below that \smallmap{S} is a locally full subcategory. Because \smallmap{S} satisfies Collection and is closed under covered maps by assumption, it remains to show that it is a locally full positive Heyting category. So let $X \in \ct{E}$ be arbitrary: $\smallmap{S}_X$ inherits the terminal object (by $1 \rTo 1 \in \smallmap{S}$ and pullback stability), pullbacks (by {\bf (A1)} and {\bf (A5)}) and the finite sums (by {\bf (A4)}, pullback stability and {\bf (A3)}) from $\ct{E}/X$. Finally, the regular structure it inherits by {\bf (A6)} and the Heyting structure by {\bf (A8)}.
\end{proof}
\begin{lemm}{composlemma}
Let \smallmap{S} be a class of maps satisfying the axioms {\bf (A1), (A5)} and {\bf (A9)}. If in a commuting triangle
\diag{ Z \ar[dr]_h \ar[rr]^f & & Y \ar[dl]^g \\
& X, &  }
$h$ belongs to \smallmap{S}, then so does $f$.
\end{lemm}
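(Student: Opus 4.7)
The plan is to factor $f$ as a composite of two maps that are manifestly in $\smallmap{S}$ by using the graph construction together with the Diagonals axiom.

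First, form the pullback
\[ P = Z \times_X Y \]
of $h \colon Z \rTo X$ along $g \colon Y \rTo X$, with projections $\pi_1 \colon P \rTo Z$ and $\pi_2 \colon P \rTo Y$. Since $g \circ f = h$, the pair $\langle 1_Z, f \rangle$ induces a map $\gamma \colon Z \rTo P$ (the graph of $f$), and by construction $f = \pi_2 \circ \gamma$. The projection $\pi_2$ is the pullback of $h$ along $g$, and since $h \in \smallmap{S}$, axiom \textbf{(A1)} gives $\pi_2 \in \smallmap{S}$. So it suffices to show that $\gamma \in \smallmap{S}$, for then $f \in \smallmap{S}$ by composition \textbf{(A5)}.

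To see that $\gamma \in \smallmap{S}$, I would exhibit it as a pullback of the diagonal $\Delta_Y \colon Y \rTo Y \times Y$. Consider the map $\langle f \pi_1, \pi_2 \rangle \colon P \rTo Y \times Y$. The pullback of $\Delta_Y$ along this map is exactly the subobject of $P$ on which $f \pi_1 = \pi_2$, which is precisely the image of $\gamma$; one checks that the resulting square
\diag{ Z \ar[d]_\gamma \ar[r]^f & Y \ar[d]^{\Delta_Y} \\
P \ar[r]_-{\langle f\pi_1, \pi_2\rangle} & Y \times Y }
is a pullback. Now $\Delta_Y \in \smallmap{S}$ by \textbf{(A9)}, so $\gamma \in \smallmap{S}$ by \textbf{(A1)}.

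The two steps together give $f = \pi_2 \circ \gamma \in \smallmap{S}$. There is no real obstacle here: the only thing to verify carefully is that the square above is indeed a pullback, which is a direct diagram chase using the universal property of $P = Z \times_X Y$ and the fact that $\Delta_Y$ is a mono.
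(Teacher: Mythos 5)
Your proof is correct and is essentially the paper's own argument: both factor $f$ through its graph into the pullback of $h$ along $g$, observe that the projection to $Y$ is small by \textbf{(A1)} since $h$ is, realize the graph map as a pullback of the small diagonal of $Y$ using \textbf{(A9)} and \textbf{(A1)}, and conclude by \textbf{(A5)}. The only cosmetic difference is that the paper exhibits the graph map as a pullback of the fibered diagonal $Y \to Y \times_X Y$ and then of $\Delta_Y$ via two juxtaposed squares, whereas you pull $\Delta_Y$ back directly along $\langle f\pi_1, \pi_2\rangle$.
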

\begin{proof}
By the universal property of the pullback $Y \times_X Z$ we obtain a map $\rho = \langle f, \id \rangle$ making the diagram
\diag{Z \ar@/_/[ddr]_f \ar@/^/[drr]^{\id} \ar[dr]^{\rho} \\
& Y \times_X Z \ar[r]^{p_2} \ar[d]_{p_1} & Z \ar[d]^h \\
& Y \ar[r]_g & X}
commute. It suffices to show that $\rho$ belongs to \smallmap{S}, because $p_1$ belongs to \smallmap{S} by pullback stability and \smallmap{S} is closed under composition. But this follows by pullback stability as both squares in the diagram
\diag{Z \ar[d]_{\rho} \ar[rr]^f & & Y \ar[d]^{\Delta_g} \ar[r] & Y \ar[d]^{\Delta} \\
Y \times_X Z \ar[rr]_{\id \times_X f} & & Y \times_X Y \ar[r] & Y \times Y}
are readily seen to be pullbacks.
\end{proof}

\subsection{Classes of display maps}

In our subsequent work on realisability \cite{bergmoerdijk07c}, classes of small maps are obtained from something we will call \emph{classes of display maps}.

\begin{defi}{displaymaps}
A locally full Heyting subcategory \smallmap{S} will be called a \emph{class of display maps}, when it satisfies the Collection axiom {\bf (A7)} and the Diagonal axiom {\bf (A9)}.
\end{defi}
\begin{prop}{chardisplmaps}
A class of maps \smallmap{S} in a positive Heyting category \ct{E} is a class of display maps iff it satisfies the axioms {\bf (A1), (A3-5), (A7-9)}, and
\begin{description}
\item[(A10)] (Images) If in a commuting triangle
\diag{ Z \ar[dr]_f \ar@{->>}[rr]^e & & Y \ar@{ >->}[dl]^m \\
& X, &  }
$e$ is a cover, $m$ is monic, and $f$ belongs to \smallmap{S}, then also $m$ belongs to \smallmap{S}.
\end{description}
\end{prop}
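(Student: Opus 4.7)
The plan is to prove the biconditional by handling each direction separately. Both hinge on the interplay between the abstract structure of $\smallmap{S}_X$ (in particular, preservation of the positive Heyting structure by the inclusion into $\ct{E}/X$) and the concrete axioms \textbf{(A1)}--\textbf{(A10)}. Throughout, the preceding lemma, which yields $f \in \smallmap{S}$ whenever $gf \in \smallmap{S}$ with no hypothesis on $g$, will be the main workhorse, applied on both sides.

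For the forward direction, assume $\smallmap{S}$ is a class of display maps. The axioms \textbf{(A1)}, \textbf{(A3)}, \textbf{(A7)}, \textbf{(A9)} are immediate from the definition. \textbf{(A4)} follows because $\smallmap{S}_1$ is positive Heyting and the inclusion into $\ct{E}$ preserves this structure, so the objects $1 \to 1$, $0 \to 1$ and $1+1 \to 1$ of $\smallmap{S}_1$ all lie in $\smallmap{S}$; \textbf{(A5)} is an instance of Local Fullness \textbf{(L3)}, and \textbf{(A8)} is just the Heyting clause. For \textbf{(A10)}, given $f = me$ with $e$ a cover and $m$ a mono, both $(Z, f)$ and $(X, \id_X)$ lie in $\smallmap{S}_X$ (the latter since $\id_X$ is the pullback of $1 \to 1$ along $X \to 1$). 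Let $(C, c)$ be the image of $f$ computed in $\smallmap{S}_X$; since the inclusion preserves images, $(C, c)$ is canonically isomorphic in $\ct{E}/X$ to $(Y, m)$ via some $\phi: C \to Y$ with $m\phi = c$. Since $c \in \smallmap{S}$, the preceding lemma applied to $m\phi = c$ and then to $\phi\phi^{-1} = \id_Y$ gives $\phi, \phi^{-1} \in \smallmap{S}$, whence $m = c \phi^{-1} \in \smallmap{S}$ by composition.

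For the reverse direction, assume \textbf{(A1, A3--5, A7--9, A10)}. The preceding lemma together with \textbf{(A5)} yields \textbf{(L3)}, so $\smallmap{S}$ is a locally full subcategory. Each $\smallmap{S}_X$ inherits the finite limits and coproducts of $\ct{E}/X$: the terminal $\id_X$ is the pullback of $1 \to 1$; pullbacks use \textbf{(A1)} and \textbf{(A5)}; the initial $0 \to X$ is the pullback of $0 \to 1$; and binary sums combine \textbf{(A3)} with the codiagonal $X+X \to X$, itself the pullback of $1+1 \to 1$. For the regular structure on $\smallmap{S}_X$, take $\phi: (A,\alpha) \to (B,\beta)$ in $\smallmap{S}_X$ with image factorization $\phi = me$ in $\ct{E}/X$: the preceding lemma applied to $\alpha = \beta\phi$ shows $\phi \in \smallmap{S}$, then \textbf{(A10)} gives $m \in \smallmap{S}$, and \textbf{(A5)} places the structure map of the image object in $\smallmap{S}$. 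Since subobjects in $\smallmap{S}_X$ correspond exactly to bounded subobjects (yet another application of the preceding lemma), \textbf{(A8)} yields the Heyting structure on $\smallmap{S}_X$, while \textbf{(A7)} and \textbf{(A9)} are assumed.

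The main technical obstacle is the forward direction of \textbf{(A10)}: one must bridge the two image factorizations (in $\smallmap{S}_X$ and in $\ct{E}/X$) and show that the resulting isomorphism, combined with the derivable cancellation lemma, transfers membership in $\smallmap{S}$ from $c$ to the specific mono $m$. The reverse direction's regularity argument uses the same ingredients in reverse order; everything else is routine bookkeeping through the axioms.
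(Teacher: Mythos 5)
Your proof is correct and follows essentially the same route as the paper's: the forward direction reads each axiom off the preservation of the positive Heyting structure by the inclusions $\smallmap{S}_X \rTo \ct{E}/X$ (with \textbf{(A10)} extracted from regularity via uniqueness of cover--mono factorisations), and the converse uses \reflemm{composlemma} to get local fullness and then rebuilds the positive Heyting structure on each $\smallmap{S}_X$, with \textbf{(A10)} supplying regularity. The paper merely compresses all of this into ``as in \refprop{altcharsmallmaps}''; your write-up supplies the same details it leaves implicit.
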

\begin{proof}
As in \refprop{altcharsmallmaps}. Like for small maps, axioms {\bf (A1, 3, 5, 7, 9)} hold for any class of display maps by definition. Axiom {\bf (A4)} holds because $\smallmap{S}_1$ is a lextensive category, and the inclusion in \ct{E} preserves this, {\bf (A10)} holds because every $\smallmap{S}_X$ is regular, and the inclusion in $\ct{E}/X$ preserves this and {\bf (A8)} holds because every $\smallmap{S}_X$ is Heyting, and the inclusion in $\ct{E}/X$ preserves this.

Conversely, let \smallmap{S} be a class of maps satisfying {\bf (A1), (A3-5), (A7-10)}. As \smallmap{S} is a locally full subcategory by \reflemm{composlemma}, and satisfies Collection and contains all diagonals by assumption, all that has to be shown is that \smallmap{S} is a locally full positive Heyting category. But that follows in the manner we have seen, using {\bf (A10)} to show that all $\smallmap{S}_X$ are regular.
\end{proof}

The proposition we just proved explains that a class of display maps is like a class of small maps, except that it need not be closed under covered maps. More precisely, it need not satisfy the Descent axiom {\bf (A3)}, and it may satisfy the Quotients axiom {\bf (A6)} only in the weaker form of {\bf (A10)}. It should be pointed out that notions that we have defined for a class of small maps, like boundedness of subobjects, can also be defined for a class of display maps. And observe that \refrema{boundedsepforlocHeytcat} applies to classes of display maps as well.

The following proposition makes clear how a class of display maps generates a class of small maps.
\begin{prop}{displtosmallmaps}
Let \ct{E} be a category with a class of display maps \smallmap{S}. Then there is a smallest class of small maps $\smallmap{S}^{\rm cov}$ containing \smallmap{S}, where the maps that belong to $\smallmap{S}^{\rm cov}$ are precisely those that are covered by morphisms in \smallmap{S}.
\end{prop}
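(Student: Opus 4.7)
The plan is to define $\smallmap{S}^{\rm cov}$ explicitly as the class of morphisms covered by some map in $\smallmap{S}$, and then verify two things: that $\smallmap{S}^{\rm cov}$ is a class of small maps containing $\smallmap{S}$, and that it is minimal among such classes. The containment $\smallmap{S} \subseteq \smallmap{S}^{\rm cov}$ is immediate, since every $f \in \smallmap{S}$ is covered by itself via the identity covering square. Minimality follows from the Covered maps axiom: any class of small maps $\smallmap{T}$ containing $\smallmap{S}$ contains every map covered by a member of $\smallmap{S} \subseteq \smallmap{T}$, so $\smallmap{T} \supseteq \smallmap{S}^{\rm cov}$.

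To show $\smallmap{S}^{\rm cov}$ is a class of small maps, I would verify the alternative axiomatisation \textbf{(A1)}--\textbf{(A9)} from \refprop{altcharsmallmaps}. Four of these are immediate: \textbf{(A1)} Pullback stability, \textbf{(A3)} Sums, and the combination \textbf{(A2)}+\textbf{(A6)} (which together express closure under covered maps) are direct consequences of parts (1), (3), and (2) respectively of \reflemm{covsqpasting}. The axioms \textbf{(A4)} Finiteness and \textbf{(A9)} Diagonals hold because the maps in question already belong to $\smallmap{S}$, since $\smallmap{S}$ is a class of display maps.

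The remaining three axioms---\textbf{(A5)} Composition, \textbf{(A7)} Collection, and \textbf{(A8)} Heyting---share a common strategy: given data in $\smallmap{S}^{\rm cov}$ witnessed by covering squares over maps in $\smallmap{S}$, one uses Collection for $\smallmap{S}$ to produce a simultaneous refinement on which the relevant maps lift into $\smallmap{S}$. For Collection \textbf{(A7)}, given a cover $p \colon Y \twoheadrightarrow X$ and $f \colon X \to A$ in $\smallmap{S}^{\rm cov}$ covered by $f_0 \colon X_0 \to A_0 \in \smallmap{S}$, one pulls $p$ back along $X_0 \twoheadrightarrow X$ and applies Collection of $\smallmap{S}$ to the resulting cover and $f_0$. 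For Composition \textbf{(A5)}, given $f \colon Z \to Y$ and $g \colon Y \to X$ in $\smallmap{S}^{\rm cov}$ covered by $f_0 \in \smallmap{S}$ and $g_0 \colon Y_1 \to X_0 \in \smallmap{S}$ respectively, one pulls $f$ back along the cover $Y_1 \twoheadrightarrow Y$, refines the resulting $\smallmap{S}^{\rm cov}$-map to a map in $\smallmap{S}$ via its own covering, and then applies Collection of $\smallmap{S}$ to splice this together with $g_0$ into a single $\smallmap{S}$-map covering $gf$; verifying the covering-square condition requires careful tracking of several pullbacks.

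The main obstacle is the Heyting axiom \textbf{(A8)}. For $f \in \smallmap{S}^{\rm cov}$ and a bounded monomorphism $m \colon B \hookrightarrow Y$, observe first that since $m$ is covered by some map in $\smallmap{S}$, axiom \textbf{(A10)} for display maps (applied to the triangle factoring this covering through the pullback $B \times_Y Y_0' \hookrightarrow Y_0'$) shows that the pullback of $m$ along a suitable cover $Y_0' \twoheadrightarrow Y$ is a monomorphism in $\smallmap{S}$. Combining this cover with the covering data for $f$ via Collection of $\smallmap{S}$ yields a cover $p \colon X_1 \twoheadrightarrow X$, a map $f_1 \colon Y_2 \to X_1 \in \smallmap{S}$, a monomorphism $m_* \hookrightarrow Y_2$ in $\smallmap{S}$ arising as a pullback of $m$, and a cover $q \colon Y_2 \twoheadrightarrow Y \times_X X_1$ such that $f_1$ factors as the pullback of $f$ along $p$ postcomposed with $q$. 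Axiom \textbf{(A8)} for $\smallmap{S}$ gives $\forall_{f_1} m_* \in \smallmap{S}$; Beck--Chevalley for the pullback square, combined with the identity $\forall_q \circ q^* = \id$ valid for the cover $q$, identifies this subobject with $p^*(\forall_f m)$. Descent (axiom \textbf{(A2)} for $\smallmap{S}^{\rm cov}$, already verified) then yields $\forall_f m \in \smallmap{S}^{\rm cov}$.
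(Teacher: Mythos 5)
Your proposal is correct and follows essentially the same route as the paper's proof: the same definition of $\smallmap{S}^{\rm cov}$, the easy axioms from \reflemm{covsqpasting} and the inclusion $\smallmap{S} \subseteq \smallmap{S}^{\rm cov}$, and the harder axioms \textbf{(A5)}, \textbf{(A7)}, \textbf{(A8)} via Collection for \smallmap{S} (the paper packages the simultaneous refinement as \reflemm{1stuseofcoll}), with \textbf{(A10)} used to arrange a monic \smallmap{S}-cover in the Heyting case. Your treatment of \textbf{(A8)} via Beck--Chevalley and $\forall_q q^* = \id$ followed by Descent is just an equivalent reformulation of the paper's formula $\forall_f(i) = \exists_p \forall_{f'}(i')$, and your explicit minimality argument is a welcome addition the paper leaves implicit.
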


The proof relies on the following lemma, which makes use of the Collection axiom {\bf (A7)}.

\begin{lemm}{1stuseofcoll}
Any two maps \func{f}{Y}{X} and \func{g}{Z}{Y} belonging to $\smallmap{S}^{\rm cov}$ fit into a diagram of the form
\diag{ Z' \ar@{->>}[r] \ar[d]_{g'} & Z \ar[d]^g \\
Y' \ar@{->>}[r] \ar[d]_{f'} & Y \ar[d]^{f} \\
X' \ar@{->>}[r] & X, }
where both squares are covering squares and $g'$ and $f'$ belong to \smallmap{S}.
\end{lemm}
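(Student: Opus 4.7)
The plan is to combine the two covering-square witnesses for $f$ and $g$ by pulling back the witness for $g$ along the cover provided by the witness for $f$, and then to apply Collection (A7) once to obtain a single joint cover $Y' \twoheadrightarrow Y$ over which both refinements become small.

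More concretely, write the witness for $f \in \smallmap{S}^{\rm cov}$ as a small map $\bar f \colon A_f \to B_f$ sitting in a covering square with covers $\alpha \colon A_f \twoheadrightarrow Y$ and $\beta \colon B_f \twoheadrightarrow X$, and the witness for $g$ as $\bar g \colon A_g \to B_g$ with covers $\gamma \colon A_g \twoheadrightarrow Z$ and $\delta \colon B_g \twoheadrightarrow Y$. Pulling back the covering square for $g$ along $\alpha$ (\reflemm{covsqpasting}(1)) gives a covering square over $A_f$ whose bottom is the cover $\pi \colon B_g \times_Y A_f \twoheadrightarrow A_f$ and whose left vertical is the small map $\bar g \times_Y A_f$. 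Apply Collection to $\pi$ and $\bar f$: this produces a covering square with a small $f' \colon Y' \to X'$, a cover $X' \twoheadrightarrow B_f$, and a factorization $Y' \to B_g \times_Y A_f$; the quasi-pullback property moreover forces $Y' \twoheadrightarrow A_f$ to be a cover. Finally define $Z' := Y' \times_{B_g} A_g$ using the composite $Y' \to B_g \times_Y A_f \to B_g$, and let $g' \colon Z' \to Y'$ be the projection, which is small as a pullback of $\bar g$.

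The bottom covering square $(Y', Y, X', X)$ then follows by horizontally pasting (\reflemm{covsqpasting}(2)) the covering square produced by Collection with the original covering square for $f$. The main work is in verifying the top covering square $(Z', Z, Y', Y)$: its bottom edge $Y' \twoheadrightarrow Y$ is clear, and what remains is to show that the canonical map $Z' \to Y' \times_Y Z$ is a cover. The expected obstacle here is that $Y' \to B_g$ is \emph{not} a cover — Collection only provides a quasi-pullback for the outer square, not one relative to $B_g$, and iterating Collection does not remedy this. The resolution is to sidestep the question of whether $Y' \to B_g$ is a cover: since $Y' \to Y$ factors through $\delta$, one has a canonical identification $Y' \times_Y Z \cong Y' \times_{B_g}(B_g \times_Y Z)$, and the desired cover $Z' \twoheadrightarrow Y' \times_Y Z$ is obtained by pulling back the cover $A_g \twoheadrightarrow B_g \times_Y Z$ (which is exactly the quasi-pullback part of the given covering square for $g$) along $Y' \to B_g$ in the slice $\ct{E}/B_g$.
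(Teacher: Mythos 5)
Your proof is correct and follows essentially the same route as the paper's: form the pullback $B_g \times_Y A_f$ of the two given covers over $Y$, apply Collection to its projection onto $A_f$ over the small map $\bar f$, paste covering squares for the bottom, and pull back $\bar g$ along $Y' \to B_g$ for the top. Your explicit verification of the top covering square — noting that $Y' \to B_g$ need not be a cover and instead obtaining $Z' \twoheadrightarrow Y' \times_Y Z$ by pulling back the quasi-pullback part of the covering square for $g$ along $Y' \to B_g$ — correctly fills in a step the paper leaves implicit.
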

\begin{proof}
By definition of $\smallmap{S}^{\rm cov}$, $g$ and $f$ fit the diagram
\begin{displaymath}
\xymatrix@!0{ & D \ar[dl]_{g_0} \ar@{->>}[dr] & \\
C \ar@{->>}[dr] & & Z \ar[dl]^g \\
& Y \ar[dr]^f & \\
B \ar@{->>}[ur] \ar[dr]_{f_0} & & X \\
& A, \ar@{->>}[ur] & }
\end{displaymath}
with $f_0, g_0 \in \smallmap{S}$ and the squares covering. We compute the pullback $B \times_Y C$, and then apply Collection to obtain a map $f' \in \smallmap{S}$ fitting into the diagram
\diag{ Z' \ar[dd]_{g'} \ar@{->>}[rrr]  & & & D \ar[dl]_{g_0} \ar@{->>}[dr] & \\
 & & C \ar@{->>}[dr] & & Z \ar[dl]^g \\
Y' \ar[r] \ar[dd]_{f'} & B \times_Y C \ar@{->>}[dr] \ar@{->>}[ur] & & Y \ar[dr]^f & \\
&  & B \ar@{->>}[ur] \ar[dr]_{f_0} & & X \\
X' \ar@{->>}[rrr] &  & & A. \ar@{->>}[ur] & }
In this picture, the map $g'$ is obtained by pulling back $g_0$, so also this map belongs to \smallmap{S}. This finishes the proof.
\end{proof}

\begin{proof} (Of \refprop{displtosmallmaps}.)
The class of maps $\smallmap{S}^{\rm cov}$ is closed under covered maps by \reflemm{covsqpasting}, so {\bf (A2)} and {\bf (A6)} follow immediately. The validity of the axiom {\bf (A3)} for $\smallmap{S}^{\rm cov}$ follows from \reflemm{covsqpasting} as well. Validity of {\bf (A4)} and {\bf (A9)} follows simply because $\smallmap{S} \subseteq \smallmap{S}^{\rm cov}$, while that of {\bf (A5)} follows from the previous lemma. The other axioms present more difficulties.

{\bf (A1)}: Assume $f$ can be obtained by pullback from a map $g \in \smallmap{S}^{\rm cov}$. We will construct a cube involving $f$ and $g$ of the form
\begin{displaymath}
\xymatrix@!0{
 & D  \ar@{->>}[rr] \ar'[d]^{g'}[dd] & & W \ar[dd]^g \\
B \ar@{->>}[rr] \ar[dd]_{f'} \ar[ur] &  & Y \ar[dd]^(.3)f \ar[ur]  \\
 & C \ar@{->>}'[r][rr] & & V. \\
A \ar@{->>}[rr] \ar[ur] &  & X \ar[ur] }
\end{displaymath}
We begin by choosing a covering square at the back with $g' \in \smallmap{S}$. Next, the front is obtained by pulling back the square at the back along the map $X \rTo V$. This makes the front a covering square as well (by \reflemm{covsqpasting}), and all the other faces pullbacks. Therefore $f' \in \smallmap{S}$,  by pullback stability of \smallmap{S}, so that $f \in \smallmap{S}^{\rm cov}$.

{\bf (A7)}: Let $\func{f}{Y}{X} \in \smallmap{S}^{\rm cov}$ and a cover $Z \rTo Y$ be given. We obtain a diagram 
\begin{displaymath}
\xymatrix@!0{
  & & & Z  \ar@{->>}[rr]  & & Y \ar[dd]^f \\
D \ar[dd]_{g'} \ar[rr]  & & P \ar[ur]\ar@{->>}[rr] & & B \ar[dd]^(.4){f'} \ar[ur]  \\
 & & & & & X. \\
C \ar@{->>}[rrrr]   & &  & & A \ar[ur] }
\end{displaymath}
The map $f' \in \smallmap{S}$ covering $f$ exists by definition of $\smallmap{S}^{\rm cov}$. Next, we apply Collection to $f'$ and the cover $P \rTo B$ obtained by pullback. This results in a map $g' \in \smallmap{S}$ covering $f'$, and hence also $f$.

{\bf (A8)}: Let $\func{f}{Y}{X}$ be a map belonging to $\smallmap{S}^{\rm cov}$, and let $A$ be an $\smallmap{S}^{\rm cov}$-bounded subobject of $Y$. Using the previous lemma, we obtain a diagram 
\diag{  A' \ar@{->>}[r] \ar[d]_{i'} & A \ar@{ >->}[d]^i \\
 Y' \ar@{->>}[r]^q \ar[d]_{ f'} & Y \ar[d]^{f} \\
 X' \ar@{->>}[r]_p & X, }
with $i', f' \in \smallmap{S}$ and both squares covering. We may actually assume that the top square is a pullback and $i'$ is monic (replace $i'$ by its image and use {\bf (A10)} if necessary). We can now use the following formula for $\forall_f (i)$ to see that it is $\smallmap{S}^{\rm cov}$-bounded:
\[ \forall_f (i) = \exists_p \forall_{f'} (i'). \]
For $\forall_{f'} (i')$ is an \smallmap{S}-bounded subobject of $X'$, since {\bf (A8)} holds for \smallmap{S}, and hence $\exists_p \forall_{f'} (i')$ is a $\smallmap{S}^{\rm cov}$-bounded subobject of $X$ by the Descent axiom {\bf (A2)} for $\smallmap{S}^{\rm cov}$.
\end{proof}

\begin{rema}{origindisplay}
A result closely related to \refprop{displtosmallmaps} can already be found in \cite{joyalmoerdijk94}. We have borrowed the term ``display map'' from sources such as \cite{hylandpitts89}, where classes of maps with similar properties were used to provide a categorical semantics for type theory.
\end{rema}

Like for small maps, a pair $(\ct{E}, \smallmap{S})$, where \ct{E} is a positive Heyting category and \smallmap{S} is a class of display maps, will be called \emph{a category with display maps}.

What does not seem to be true in general is that additional axioms on \smallmap{S}, such as those explained in the next section, are automatically inherited by $\smallmap{S}^{\rm cov}$. The question which additional properties are inherited is explored in Section 6, and it will be seen that the answer may depend on the exactness properties of \ct{E}.

\section{Axioms for classes of small maps}

For the purpose of modelling the set theories {\bf IZF} and {\bf CZF}, our notion of a category with small maps is too weak (the reader will find the axioms for these set theories in Appendix A below). Therefore we consider in this section various possible strengthenings, obtained by imposing further requirements on the class of small maps.  

For later use it is important to observe that the axioms make sense for a class of display maps as well. For this reason, our standing assumption throughout this section is that $(\ct{E}, \smallmap{S})$ is a category with display maps.

\subsection{Representability}

\begin{defi}{representable}
A \emph{representation} for a class of display maps \smallmap{S} is a morphism $\func{\pi}{E}{U} \in \smallmap{S}$ such that any morphism $f \in S$ is covered by a pullback of $\pi$. More explicitly: any $\func{f}{Y}{X} \in \smallmap{S}$ fits into a diagram of the form
\diag{Y \ar[d]_f & A \ar[d] \ar[r] \ar@{->>}[l] & E \ar[d]^{\pi} \\
X & B \ar[r] \ar@{->>}[l] & U, }
where the left hand square is covering and the right hand square is a pullback. The class \smallmap{S} will be called \emph{representable}, if it has a representation.
\end{defi}
\begin{rema}{onrepr}
In \cite{joyalmoerdijk95}, the authors take as basic a different notion of representability. Even when these notions can be shown to be equivalent (as in \refprop{reprinJMsense}), it is the above notion we find easier to work with.
%One might argue that for display maps a stronger notion of representation is %more natural: a representation for a class of display maps \smallmap{S} is a %map $\pi$ such that any map in \smallmap{S} can be obtained as a pullback of %it. But for our purposes our weaker notion works equally well, so we have %decided to stick with it.
\end{rema}

\subsection{Separation}
 
For the purpose of modelling the Full separation axiom of {\bf IZF}, one may impose the following axiom:
\begin{description}
\item[(M)] All monomorphisms belong to \smallmap{S}.
\end{description}

\subsection{Power types}

Before we introduce an axiom corresponding to the Power set axiom of {\bf IZF}, we first formulate an axiom which imposes the existence of a power class object. Intuitively, the elements of the power class $\spower X$ of a class $X$ are the subsets of the class $X$. For our purposes it is important to realise that an axiom requiring the existence of a power \emph{class} is rather weak: it holds in \emph{every} set theory, even predicative ones like {\bf CZF}, and it is therefore not to be confused with the Power set axiom. 

%In order to formulate power classes categorically, we define a notion that should intuitively be thought of as a $D$-indexed family of small subsets of $C$. 

\begin{defi}{powerobj}
By a \emph{$D$-indexed family of subobjects} of $C$, we mean a subobject $R \subseteq C \times D$. A $D$-indexed family of subobjects $R \subseteq C \times D$ will be called \emph{\smallmap{S}-displayed} (or simply \emph{displayed}), whenever the composite
\[ R \subseteq C \times D \rTo D \]
belongs to \smallmap{S}. If it exists, the \emph{power class object} $\spower X$ is the classifying object for the displayed families of subobjects of $X$. This means that it comes equipped with a displayed $\spower X$-indexed family of subobjects of $X$, denoted by $\in_X \subseteq X \times \spower X$ (or simply $\in$, whenever $X$ is understood), with the property that for any displayed $Y$-indexed family of subobjects of $X$, $R \subseteq X \times Y$ say, there exists a unique map \func{\rho}{Y}{\spower X} such that the square
\diag{ R \ar@{ >->}[d] \ar[r] & \in_X \ar@{ >->}[d] \\
X \times Y \ar[r]_{\id \times \rho} & X \times \spower X}
is a pullback.
\end{defi}

This leads to the following axiom for a class of display maps \smallmap{S}:
\begin{description}
\item[(PE)] For any object $X$ the power class object $\spower X$ exists.
\end{description}
For once, we will briefly indicate why this axiom is stable under slicing:
\begin{lemm}{PEstableunderslicing}
If $(\ct{E}, \smallmap{S})$ is a category with of a class of display maps satisfying {\bf (PE)} and $X$ is any object in \ct{E}, then $\smallmap{S}/X$ also satisfies {\bf (PE)} in $\ct{E}/X$. Moreover, $\spower$ is an indexed endofunctor.
\end{lemm}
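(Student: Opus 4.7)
The plan is to construct the slice power class explicitly as a bounded subobject of $\spower Y \times X$ in \ct{E}, and to deduce both the universal property and indexed functoriality from the corresponding data for $\spower Y$ in \ct{E}.

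Given an object \func{f}{Y}{X} of $\ct{E}/X$, let $P_f$ be the subobject of $\spower Y \times X$ picked out internally by
\[ P_f \; = \; \{ \, (A,x) \in \spower Y \times X \, : \, \forall a \in A \, . \, f(a) = x \, \}, \]
equipped with the second projection $P_f \rTo X$ as its structure map. The quantifier $\forall a \in A$ ranges along the small map $\in_Y \rTo \spower Y$, and the inner predicate $f(a) = x$ is bounded because the diagonal $X \rTo X \times X$ is small (axiom {\bf (A9)}), so by \refrema{boundedsepforlocHeytcat} this indeed cuts out a (bounded) subobject of $\spower Y \times X$. I then take the universal $\in$-relation $\in_{P_f}$ to be the pullback of $\in_Y \subseteq Y \times \spower Y$ along $P_f \rTo \spower Y$; the defining condition of $P_f$ ensures this pullback sits inside $Y \times_X P_f$, and pullback stability of \smallmap{S} makes its projection to $P_f$ small, as required for a displayed family in $\ct{E}/X$.

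The universal property follows by reduction to that of $\spower Y$. Given a displayed family $R \subseteq Y \times_X Z$ over \func{g}{Z}{X} in $\ct{E}/X$ (so $R \rTo Z$ lies in \smallmap{S}), view $R \subseteq Y \times Z$ as a displayed $Z$-indexed family of subobjects of $Y$ in \ct{E}; axiom {\bf (PE)} in \ct{E} furnishes a unique classifying \func{\rho_0}{Z}{\spower Y}, and the composite $\langle \rho_0, g \rangle \colon Z \rTo \spower Y \times X$ factors through $P_f$ precisely because $R$ was assumed to land inside $Y \times_X Z$ rather than merely $Y \times Z$. The resulting map \func{\rho}{Z}{P_f} over $X$ is the required classifier; its uniqueness and the pullback property for $\in_{P_f}$ descend directly from the corresponding facts about $\rho_0$.

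For the indexed functoriality, the action of $\spower_{\ct{E}/X}$ on vertical morphisms is inherited from that of $\spower$ on \ct{E}. For naturality under change of base along \func{h}{X'}{X}, setting $f' := h^*f$, one checks that $h^*P_f$ (pairs $(A,x')$ with $A \subseteq f^{-1}(h(x'))$) is canonically isomorphic to $P_{f'}$ (pairs $(A',x')$ with $A' \subseteq (f')^{-1}(x')$) via the fibrewise isomorphism $(f')^{-1}(x') \cong f^{-1}(h(x'))$, and that this isomorphism is compatible with the universal families. The only genuinely delicate point in the whole argument is the boundedness check for the predicate defining $P_f$, which implicitly uses both {\bf (A8)} and {\bf (A9)}; once that is in hand, the remainder is a routine transport of the universal property of $\spower$ through the slice construction.
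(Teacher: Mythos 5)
Your construction of $\slspower{X}(f)$ as the subobject $\{(x,\alpha) \in X \times \spower(Y) : \forall y \in \alpha \, f(y) = x\}$ with the projection to $X$ is exactly the one the paper gives; the paper's proof consists of this construction alone and omits all verifications. Your boundedness check via {\bf (A8)} and {\bf (A9)}, the reduction of the universal property to that of $\spower Y$ in $\ct{E}$, and the base-change compatibility are precisely the routine details the paper leaves to the reader, and they are carried out correctly.
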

\begin{proof}
If \func{f}{Y}{X} is an object of $\ct{E}/X$, then $\slspower{X}(f) \rTo X$ is
\[ \{ (x \in X, \alpha \in \spower(Y)) \, : \, \forall y \in \alpha \,  f(y) = x \}, \]
together with the projection on the first component.
\end{proof}

As discussed already in \cite{joyalmoerdijk95}, the assignment $X \mapsto \spower X$ is functorial for a class of small maps for which {\bf (PE)} holds (we doubt whether the same is true for a class of display maps). In fact, in this case $\spower$ is the functor part of a monad, with a unit \func{\eta_X}{X}{\spower X} and a multiplication \func{\mu_X}{\spower \spower X}{\spower X} which can be understood intuitively as singleton and union. We refer to \cite{joyalmoerdijk95} for a discussion of these points. We also borrow from \cite{joyalmoerdijk95} the following proposition, which we will have to invoke later.
\begin{prop}{collandPE} \cite[Proposition I.3.7]{joyalmoerdijk95}
When \smallmap{S} is a class of small maps satisfying {\bf (PE)}, then $\spower$ preserves covers.
\end{prop}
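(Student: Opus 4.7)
The plan is to show that the universal membership family $\in_Y \subseteq Y \times \spower Y$ is realised, after pulling back along a suitable cover of $\spower Y$, as the fibrewise direct $f$-image of some displayed family. Concretely, I will produce a cover $h: T' \twoheadrightarrow \spower Y$ together with a map $\rho: T' \to \spower X$ satisfying $\spower(f) \circ \rho = h$; since $h$ is then a cover factoring through $\spower(f)$, this forces $\spower(f)$ itself to be a cover.

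To build these, I would first form the pullback $f^*{\in_Y} \subseteq X \times \spower Y$ of $\in_Y$ along $f \times \id$. Its projection to $\spower Y$ factors as the cover $f^*{\in_Y} \twoheadrightarrow \in_Y$ (a pullback of $f$) followed by the small map $\in_Y \to \spower Y$, so it is only a cover-over-a-small-map rather than being itself small. This is exactly the situation the Collection axiom {\bf (A7)} is designed to remedy: applying it produces a cover $h: T' \twoheadrightarrow \spower Y$ together with a small map $g: Z \to T'$ and a compatible $Z \to f^*{\in_Y}$ such that the canonical map $Z \to T' \times_{\spower Y} \in_Y$ is a cover. I would then define $A' \subseteq X \times T'$ to be the image of the map $Z \to X \times T'$ whose first component is $Z \to f^*{\in_Y} \hookrightarrow X \times \spower Y \to X$ and whose second is $g$. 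Since $Z \twoheadrightarrow A'$ is a cover and $Z \to T'$ is small, the Quotients axiom {\bf (A6)} makes $A' \to T'$ small, so $A'$ is displayed and yields its classifying map $\rho: T' \to \spower X$.

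It then remains to verify $\spower(f) \circ \rho = h$, which by the universal property of $\spower Y$ reduces to checking that the image of $A'$ under $f \times \id_{T'}$ in $Y \times T'$ coincides with $\in_Y \times_{\spower Y} T'$. That image equals the image of $Z \to Y \times T'$ (as $Z \twoheadrightarrow A'$), and the latter factors through $\in_Y \times_{\spower Y} T'$ as precisely the canonical cover $Z \to T' \times_{\spower Y} \in_Y$ supplied by the covering square, so the two subobjects agree. The main obstacle will be the middle step: the fibrewise $f$-image of $\in_Y$ is not displayed in general, and without some form of collection there is no way to replace it by a displayed family even after a cover on the base. Invoking {\bf (A7)} to trade the non-small composite $f^*{\in_Y} \to \spower Y$ for the small $g: Z \to T'$ at the cost of a cover on $\spower Y$ is the crux of the proof; everything else is bookkeeping about images and covers in the regular category \ct{E}.
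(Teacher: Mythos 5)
Your argument is correct. Note that the paper does not actually prove this proposition: it is imported verbatim from \cite{joyalmoerdijk95} (Proposition I.3.7), so there is no internal proof to compare against; your Collection-based argument is essentially the standard one given there. The structure is right: pull $\in_Y$ back along $f \times \id$ to obtain a cover $f^*{\in_Y}$ of $\in_Y$ sitting over the small map $\in_Y \rTo \spower Y$, apply {\bf (A7)} to produce the cover $h$ of $\spower Y$ together with the displayed family $A' \subseteq X \times T'$, and conclude from $\spower(f)\circ\rho = h$ by right-cancellation of covers in a regular category. The two points that genuinely need checking are both handled correctly: smallness of $A' \rTo T'$ via the Quotients axiom {\bf (A6)} (which is available because \smallmap{S} is a class of small maps and not merely of display maps), and the identification of the family classified by $\spower(f)\circ\rho$ with the fibrewise $f$-image of $A'$, which rests on the stability of image factorisations under pullback in the regular category \ct{E} together with the fact that the canonical map $Z \rTo T' \times_{\spower Y} \in_Y$ supplied by the covering square is a cover.
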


%The literature is divided on the status of the axiom {\bf (PE)}. Awodey and co take this axiom as a basic property of a class of small maps, while in Joyal and Moerdijk it is a derived feature using function types and representability (see \refprop{PiEimpliesPE} below). Here we make yet another choice: we regard it as an additional feature for a class of small maps. Our reasons are pragmatic: we wish to discuss it separated from the axioms {\bf (A1-9)}.

\begin{rema}{boundedsubobjclass} For a class of small maps \smallmap{S}, the object $\Omega_b = \spower 1$ could be called the object of bounded truth-values, or the bounded subobject classifier, as the subobject $\in$ of  $1 \times \spower 1 \cong \spower 1$ classifies bounded subobjects: for any mono $\func{m}{A}{X}$ in $\smallmap{S}$ there is a unique map $\func{c_m}{X}{\spower 1}$ such that
\diag{ A \ar@{ >->}[d]_m \ar[r] & \in \ar@{ >->}[d] \\
X \ar[r]_{c_m} & \spower 1 }
is a pullback. Actually, as for the ordinary subobject classifier in a topos, it can be shown that the domain of the map $\in \rTo \spower 1$ is isomorphic to the terminal object 1. Moreover, internally, $\spower 1$ has the structure of a poset with small infima and suprema, implication, and top and bottom. This is a consequence of the fact that the maximal and minimal subobject are bounded, and bounded subobjects are closed under implication, union, intersection, existential and universal quantification. Another way of expressing this would be to say that bounded truth-values are closed under truth and falsity, implication, conjunction and disjunction, and existential and universal quantification over small sets. The classifying bounded mono $1 \rTo \spower 1$ will therefore be written $\top$ (for ``true'' or ``top''), as it points to the top element of the poset $\spower 1$.

A formula $\phi$ in the internal language will be said to have a bounded truth-value, when
\[ \exists p \in \spower 1 \, ( \, \phi \leftrightarrow p = \top \, ), \]
or, equivalently,
\[ \exists p \in \spower 1 \, ( \, \phi \leftrightarrow * \in p \, ), \]
if $*$ is the unique element of $1$. Notice that in both cases a $p \in \spower 1$ having the required property is automatically unique. Note also that for a subobject $A \subseteq X$, saying that $x \in A$ has a bounded truth-value for all $x \in X$ is the same as saying that $A$ is a bounded subobject of $X$.
\end{rema}

For a class of display maps \smallmap{S} satisfying {\bf (PE)} we can now state the axiom we need to model the Power set axiom of {\bf IZF}.
\begin{description}
\item[(PS)] For any map $\func{f}{Y}{X} \in \smallmap{S}$, the power class object $\slspower{X} (f) \rTo X$ in $\ct{E}/X$ belongs to \smallmap{S}.
\end{description}

\subsection{Function types}

We will now introduce the axiom {\bf ($\Pi$S)} reminiscent of the Exponentiation axiom in set theory. Before we do so, we first note an important consequence of the axiom {\bf (PE)}. 

Call a map \func{f}{Y}{X} in \ct{E} \emph{exponentiable}, if the functor \func{(-) \times f}{\ct{C}/X}{\ct{C}/X} has a right adjoint $(-)^f$, or, equivalently, if the functor \func{f^*}{\ct{C}/X}{\ct{C}/Y} has a right adjoint $\Pi_f$.
\begin{lemm}{PEimpliesPiE} \cite{awodeywarren05}
When a class of display maps satisfies {\bf (PE)}, then all display maps are exponentiable.
\end{lemm}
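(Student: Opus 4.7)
The plan has two steps: reduce to the absolute case by slicing, and then construct $Z^Y$ for small $Y$ explicitly as a bounded subobject of $\spower(Y \times Z)$.

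For the reduction, by \reflemm{PEstableunderslicing} the sliced pair $(\ct{E}/X, \smallmap{S}/X)$ is again a category with display maps satisfying {\bf (PE)}, in which a display map $f : Y \to X$ corresponds to a small object, and exponentiability of $f$ in $\ct{E}$ is the same as exponentiability of that small object in $\ct{E}/X$. It therefore suffices to show that every small object $Y$ in $\ct{E}$ is exponentiable. Given such a $Y$ and an arbitrary $Z$, I define $Z^Y$ to be the subobject of $\spower(Y \times Z)$ consisting of those ``small'' subobjects of $Y \times Z$ that are graphs of functions from $Y$ to $Z$. Explicitly, $Z^Y \hookrightarrow \spower(Y \times Z)$ is carved out by the internal formula
\[ \phi(R) \ \equiv \ \bigl(\forall y \in Y\ \exists\,(y', z) \in R\ \ y' = y\bigr)\ \wedge\ \bigl(\forall\,(y, z), (y', z') \in R\ (y = y' \to z = z')\bigr). \]
All its quantifiers are along small maps: $\forall y \in Y$ because $Y$ is small, while the two quantifiers bounded by $R$ are along (reindexings of) $\in_{Y \times Z} \to \spower(Y \times Z)$, which is small by the very definition of the power class. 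Equalities in $Y$ and $Z$ are bounded by {\bf (A9)}, and the basic membership predicate $(y,z) \in R$ is bounded because the monomorphism $\in_{Y \times Z} \hookrightarrow (Y \times Z) \times \spower(Y \times Z)$ is small by \reflemm{composlemma} (its composite to $\spower(Y \times Z)$ is small, and diagonals are small by {\bf (A9)}). Hence $\phi$ defines a bounded subobject by \refrema{boundedsepforlocHeytcat}.

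To verify the universal property, observe that a map $W \to Z^Y$ transposes to a displayed family $R \hookrightarrow (Y \times Z) \times W$ whose fibres are all functional relations; the projection $R \to W \times Y$ is then epic (by totality) and monic (by single-valuedness), hence an isomorphism in the regular category $\ct{E}$, and composing its inverse with the projection to $Z$ yields the desired $W \times Y \to Z$. Conversely, any map $\phi : W \times Y \to Z$ has graph $R \cong W \times Y$ displayed over $W$ via the small pullback of $Y \to 1$, and its classifying map $W \to \spower(Y \times Z)$ factors through $Z^Y$ by construction; these assignments are mutually inverse and natural in $W$. The main obstacle is the boundedness of $\phi(R)$: the naive formulation of functionality uses quantifiers over the possibly non-small object $Z$, and the fix is to bound all such quantifiers through the variable $R$ itself, whose associated membership quantifier is small by fiat and whose basic membership predicate is bounded by \reflemm{composlemma}.
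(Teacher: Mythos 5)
Your proof is correct and follows essentially the same route as the paper's: reduce by slicing (via \reflemm{PEstableunderslicing}) to exponentiating a small object $Y$, then carve $Z^Y$ out of $\spower(Y \times Z)$ as the subobject of functional relations; you have simply written out the verifications that the paper leaves to the reader. One small remark: the boundedness of your formula $\phi(R)$ is not actually needed for this lemma, since the exponential only has to \emph{exist} as an object of $\ct{E}$ and any first-order formula already defines a subobject in a Heyting category, so your careful rebounding of the quantifiers through $R$ is harmless but dispensable.
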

\begin{proof}
Since the axiom {\bf (PE)} is stable under slicing, it suffices to show that the object $X^A$ exists, when $A$ is small. But this can be constructed as:
\[ X^A := \{ \alpha \in \spower (A \times X) \, : \, \forall a \in A \, \exists ! x \in X \, (a, x) \in \alpha \}. \]
The required verifications are left to the reader.
\end{proof}

In certain circumstances, the converse holds as well (see \refcoro{PiEimpliesPE}).

One can formulate the conclusion of the preceding lemma as an axiom:
\begin{description}
\item[($\Pi$E)] All morphisms $f \in \smallmap{S}$ are exponentiable.
\end{description}
This axiom should not be associated with the Exponentiation axiom in set theory, which is more closely related to its strengthening {\bf ($\Pi$S)} below.
\begin{description}
\item[($\Pi$S)] For any map $\func{f}{Y}{X} \in \smallmap{S}$, the functor
\[ \func{\Pi_f}{\ct{E}/Y}{\ct{E}/X} \]
exists and preserves morphisms in \smallmap{S}.
\end{description}
Note that:
\begin{lemm}{PSimpliesPiS}
For a class of display maps \smallmap{S}, {\bf (PS)} implies {\bf ($\Pi$S)}.
\end{lemm}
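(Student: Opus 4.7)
The plan is to reduce {\bf ($\Pi$S)} to an application of bounded separation inside a suitable power-class object, using the previous lemma to get the existence of $\Pi_f$ for free.

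First I would observe that since {\bf (PS)} presupposes {\bf (PE)}, \reflemm{PEimpliesPiE} already supplies the existence of $\Pi_f : \ct{E}/Y \to \ct{E}/X$ for every $f \in \smallmap{S}$; it therefore only remains to show that $\Pi_f$ preserves smallness. So let $g : A \to Y$ lie in $\smallmap{S}$. Composition {\bf (A5)} gives $f \circ g : A \to X$ in $\smallmap{S}$, and {\bf (PS)}, which is stable under slicing by the same argument as in \reflemm{PEstableunderslicing}, makes the power-class map $\slspower{X}(f \circ g) \to X$ small in $\ct{E}/X$. I would then realise $\Pi_f(g)$ as the subobject
$$\Pi_f(g) \;=\; \{\, \alpha \in \slspower{X}(f \circ g) \,:\, \forall y \in Y \; \exists! a \in \alpha \; g(a) = y \,\}$$
of $\slspower{X}(f \circ g)$, where $Y$ refers to the small object $(Y,f)$ of $\ct{E}/X$. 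All quantifiers appearing here are bounded: $\forall y$ ranges along the small map $f$; $\exists! a$ ranges along the displayed membership projection $\in \;\to\; \slspower{X}(f \circ g)$ (displayed by definition of the power class), with the uniqueness clause being bounded because the diagonal of $A$ is small by {\bf (A9)}; and the basic predicate $g(a) = y$ is bounded for the same reason. By bounded separation (\refrema{boundedsepforlocHeytcat}) the inclusion $\Pi_f(g) \hookrightarrow \slspower{X}(f \circ g)$ is a bounded mono, so composing with the small map $\slspower{X}(f \circ g) \to X$ yields $\Pi_f(g) \to X$ small, as required.

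The hard part is checking that the object just written down is genuinely (canonically isomorphic to) the value of the right adjoint $\Pi_f$ produced by \reflemm{PEimpliesPiE}. Concretely, one must construct an evaluation map $f^{*}(\Pi_f(g)) \to g$ in $\ct{E}/Y$ and verify its universal property among maps into $g$. This is the parametric, sliced analogue of the description of the exponential $X^{A}$ inside $\spower(A \times X)$ given in the proof of \reflemm{PEimpliesPiE}: the graph of a section of $g$ along the fibre $Y_x$ is exactly a bounded subobject $\alpha$ of $A$ over $x$ on which $g$ restricts to a bijection with $Y_x$, which is what the displayed formula expresses. The verification is essentially routine but notationally involved, and it is this identification of $\Pi_f(g)$, rather than the bounded-separation step, where I would expect to have to be careful.
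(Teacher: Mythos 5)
Your proposal is correct and follows essentially the same route as the paper, whose proof of \reflemm{PSimpliesPiS} is simply ``as in \reflemm{PEimpliesPiE}'': carve $\Pi_f(g)$ out of a power class object (small by {\bf (PS)}) via bounded separation, with the universal-property verification left as routine. The only cosmetic difference is that you describe a section by its image inside $\slspower{X}(f\circ g)$ rather than by its graph inside a power class of a product, which changes nothing of substance.
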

\begin{proof}
As in \reflemm{PEimpliesPiE}.
\end{proof}

The converse is certainly false: the Exponentiation axiom is a consequence of {\bf CZF}, but the Power set axiom is not. (For a countermodel, see \cite{streicher05} and \cite{lubarsky06}. We will study this model further in the second paper of this series.)

\subsection{Inductive types}

In this section we want to discuss axioms concerning the existence and smallness of certain inductively defined structures. Our paradigmatic example of an inductively defined object is the W-type in Martin-L\"of's type theory \cite{martinlof84}. We will not give a review of the theory of W-types, but we do wish to give a complete explanation of how they are modelled categorically, following \cite{moerdijkpalmgren00}.

W-types are examples of initial algebras, and as we will meet other initial algebras as well, we will give the general definition.

\begin{defi}{initalg}
Let \func{T}{\ct{C}}{\ct{C}} be an endofunctor on a category \ct{C}. The category $\alg{T}$ of \emph{$T$-algebras} has as objects pairs $(A, \func{\alpha}{TA}{A})$, and as morphisms $(A, \alpha) \rTo (B, \beta)$ arrows \func{m}{A}{B} making the diagram
\diag{ TA \ar[r]^{Tm} \ar[d]_{\alpha} & TB \ar[d]^{\beta} \\
A \ar[r]_m & B}
commute. The initial object in this category (whenever it exists) is called the \emph{initial $T$-algebra}. In case $T$ is indexed endofunctor, the category of $\alg{T}$ of $T$-algebras is an indexed category, and the initial $T$-algebra will be called the \emph{indexed initial $T$-algebra} if all its reindexings are also initial in the appropriate fibres.
\end{defi}

An essential fact about initial algebras is that they are fixed points. A \emph{fixed point} for an endofunctor $T$ is an object $A$ together with an isomorphism $TA \cong A$. A lemma by Lambek \cite{lambek70} tells us that the structure map $\alpha$ of the initial algebra, assuming it exists, is an isomorphism, so that initial algebras are fixed points.

Another property of initial algebras is that they have no proper subalgebras: \func{m}{(A, \alpha)} {(B, \beta)} is a subalgebra of $(B, \beta)$, when $m$ is a monomorphism in \ct{C}. The subalgebra is called proper, in case $m$ is not an isomorphism in \ct{C}. That initial algebras have no proper subalgebras is usually related to an induction principle that they satisfy, while their initiality expresses that they allow definitions by recursion.

When a map \func{f}{B}{A} is exponentiable in a cartesian category \ct{E}, it induces an endofunctor on \ct{C}, which will be called the \emph{polynomial functor} $P_f$ associated to $f$. The quickest way to define it is as the following composition:
\diag{ \ct{C} \cong \ct{C}/1 \ar[r]^{B^*} & \ct{C}/B \ar[r]^{\Pi_f} & \ct{C}/A \ar[r]^{\Sigma_A} & \ct{C}/1 \cong \ct{C}. }
In more set-theoretic terms it could be defined as:
\[ P_f(X) = \sum_{a \in A} X^{B_a}. \]
Whenever it exists, the initial algebra for the polynomial functor $P_f$ will be called the W-type associated to $f$.

Intuitively, elements of a W-type are well-founded trees. In the category of sets, all W-types exist, and the W-types have as elements well-founded trees, with an appropriate labelling of its edges and nodes. What is an appropriate labelling is determined by the branching type \func{f}{B}{A}: nodes should be labelled by elements $a \in A$, edges by elements $b \in B$, in such a way that the edges into a node labelled by $a$ are enumerated by $f^{-1}(a)$. The following picture hopefully conveys the idea:
\begin{displaymath}
\xymatrix@C=.75pc@R=.5pc{ & & \ldots & & & \ldots & \ldots & \ldots \\
           & & {\bullet} \ar@{-}[dr]_u & & a \ar@{-}[dl]^v & {\bullet} \ar@{-}[d]_x & {\bullet} \ar@{-}[dl]_y & {\bullet} \ar@{-}[dll]^z \\
*{\begin{array}{rcl}
f^{-1}(a) & = & \emptyset \\
f^{-1}(b) & = & \{ u, v \} \\
f^{-1}(c) & = & \{ x, y, z \} \\
& \ldots & 
\end{array}}       & a \ar@{-}[drr]_x & & b \ar@{-}[d]_y & & c \ar@{-}[dll]^z & & \\
           & & & c & & & & } 
\end{displaymath}
This set has the structure of a $P_f$-algebra: when an element $a \in A$ is given, together with a map \func{t}{B_a}{W_f}, one can build a new element ${\rm sup}_a t \in W_f$, as follows. First take a fresh node, label it by $a$ and draw edges into this node, one for every $b \in B_a$, labelling them accordingly. Then on the edge labelled by $b \in B_a$, stick the tree $tb$. Clearly, this sup operation is a bijective map. Moreover, since every tree in the W-type is well-founded, it can be thought of as having been generated by a possibly transfinite number of iterations of this sup operation. That is precisely what makes this algebra initial. The trees that can be thought of as having been used in the generation of a certain element $w \in W_f$ are called its subtrees. One could call the trees $tb \in W_f$ the \emph{immediate subtrees} of ${\rm sup}_a t$, and $w' \in W_f$ a \emph{subtree} of $w \in W_f$ if it is an immediate subtree, or an immediate subtree of an immediate subtree, or\ldots, etc. Note that with this use of the word subtree, a tree is never a subtree of itself (so proper subtree might have been a better terminology).

This concludes our introduction to W-types.

In the presence of a class of display maps \smallmap{S} satisfying {\bf ($\Pi$E)}, we will consider the following two axioms for W-types:
\begin{description}
\item[(WE)] For all $\func{f}{X}{Y} \in \smallmap{S}$, $f$ has an indexed W-type $W_f$.
\item[(WS)] Moreover, if $Y$ is small, also $W_f$ is small.
\end{description}

\subsection{Infinity}

The following two axioms, which make sense for any class of display maps \smallmap{S}, are needed to model the Infinity axiom in {\bf IZF} and {\bf CZF}:
\begin{description}
\item[(NE)] \ct{E} has a natural numbers object $\NN$.
\item[(NS)] Moreover, $\NN \rTo 1 \in \smallmap{S}$.
\end{description}
In fact, this is a special case of the previous example, for the natural numbers object is the W-type associated to the left sum inclusion \func{i}{1}{1 + 1} (which is always exponentiable). So {\bf (WE)} implies {\bf (NE)} and {\bf (WS)} implies {\bf (NS)}.

\subsection{Fullness}

We have almost completed our tour of the different axioms for a class of small maps we want to consider. There is one axiom that is left, the Fullness axiom, which allows us to model the Subset collection axiom of {\bf CZF}. It should be considered as a strengthened version of the axiom {\bf ($\Pi$S)}.

Over the other axioms of {\bf CZF} the Subset collection axiom is equivalent to an axiom called Fullness (see \cite{aczelrathjen01}):
\begin{description}
\item[Fullness:] $\exists z\;\! (z \;\!\subseteq \;\!{\bf mvf}(a,b) \land \forall x \;\!\epsilon \;\!{\bf mvf}(a,b) \;\! \exists c \;\!\epsilon \;\!z \;\! (c \;\!\subseteq \;\!x))$,
\end{description}
where we have used the abbreviation ${\bf mvf}(a, b)$ for the class of multi-valued functions from $a$ to $b$, i.e., sets $r \subseteq a \times b$ such that $\forall x \epsilon a \, \exists y \epsilon b \, (x, y) \epsilon r$. In words, this axiom states that for any pair of sets $a$ and $b$, there is a set of multi-valued functions from $a$ to $b$ such than any multi-valued function from $a$ to $b$ contains one in this set. We find it more convenient to consider a slight reformulation of Fullness, which concerns multi-valued \emph{sections}, rather than multi-valued \emph{functions}. A multi-valued section (or \emph{mvs}) of a function \func{\phi}{b}{a} is a multi-valued function $s$ from $a$ to $b$ such that $\phi s = \id_a$ (as relations). Identifying $s$ with its image, this is the same as a subset $p$ of $b$ such that $p \subseteq b \rTo a$ is surjective. Our reformulation of Fullness states that for any such $\phi$ there is a small family of small \emph{mvs}s such that any \emph{mvs} contains one in this family. Written out formally:
\begin{description}
\item[Fullness':] $\exists z\;\! (z \;\!\subseteq \;\!{\bf mvs}(f) \land \forall x \;\!\epsilon \;\!{\bf mvs}(f) \;\! \exists c \;\!\epsilon \;\!z \;\! (c \;\!\subseteq \;\!x))$.
\end{description}
Here, ${\bf mvs}(f)$ is an abbreviation for the class of all multi-valued sections of a function \func{f}{b}{a}, i.e., subsets $p$ of $b$ such that $\forall x \epsilon a \, \exists y \epsilon p \, f(y) = x$. The two formulations of Fullness are clearly equivalent. (Proof: observe that multi-valued sections of $\phi$ are multi-valued functions from $a$ to $b$ with a particular $\Delta_0$-definable property, and multi-valued functions from $a$ to $b$ coincide with the multi-valued sections of the projection $a \times b \rTo a$.)

We now translate our formulation of Fullness in categorical terms. A multi-valued section (\emph{mvs}) for a map \func{\phi}{B}{A}, over some object $X$, is a subobject $P \subseteq B$ such that the composite $P \rTo A$ is a cover. We write
\[ {\rm mvs}_X(\phi) \]
for the set of all \emph{mvs}s of a map $\phi$. This set obviously inherits the structure of a partial order from Sub($B$).

Multi-valued sections have a number of stability properties. First of all, any morphism \func{f}{Y}{X} induces an order-preserving map
\[ {\rm mvs}_X(\phi) \rTo {\rm mvs}_Y (f^*\phi), \]
obtained by pulling back along $f$. To avoid overburdening the notation, we will frequently talk about the map $\phi$ over $Y$, when we actually mean the map $f^* \phi$ over $Y$, the map $f$ always being understood. 

Furthermore, in a covering square
\diag{ B_0 \ar[d]_{\phi_0} \ar[r]^{\beta} & B \ar[d]^{\phi} \\
A_0 \ar[r]_{\alpha} & A, }
the sets mvs($\phi_0$) and mvs($\phi$) are connected by a pair of adjoint functors. The right adjoint \func{\beta^*}{{\rm mvs}(\phi)}{{\rm mvs}(\phi_0)} is given by pulling back along $\beta$, and the left adjoint $\beta_*$ by taking the image along $\beta$. 

Suppose we have fixed a class of display maps \smallmap{S}. We will call a \emph{mvs} $P \subseteq B$ of \func{\phi}{B}{A} \emph{displayed}, when the composite $P \rTo A$ belongs to \smallmap{S}. In case $\phi$ belongs to \smallmap{S}, this is equivalent to saying that $P$ is a bounded subobject of $B$. 

If we assume that in a covering square as above $\phi$ and $\phi_0$ belong to \smallmap{S}, the pullback functor $\beta^*$ will map displayed \emph{mvs}s to displayed \emph{mvs}s. If we assume moreover that $\beta$, or $\alpha$, belongs to \smallmap{S}, also $\beta_*$ will preserve displayed \emph{mvs}s.

We can now state a categorical version of the Fullness axiom:\footnote{A version in terms of multi-valued functions was contained in \cite{bergdemarchi06}.} 
\begin{description}
\item[(F)] For any $\func{\phi}{B}{A} \in \smallmap{S}$ over some $X$ with $A \rTo X \in \smallmap{S}$, there is a cover \func{q}{X'}{X} and a map \func{y}{Y}{X'} belonging to $\smallmap{S}$, together with a displayed \emph{mvs} $P$ of $\phi$ over $Y$, with the following ``generic'' property: if \func{z}{Z}{X'} is any map and $Q$ any displayed \emph{mvs} of $\phi$ over $Z$, then there is a map \func{k}{U}{Y} and a cover \func{l}{U}{Z} with $yk = zl$, such that $k^* P \leq l^* Q$ as (displayed) \emph{mvs}s of $\phi$ over $U$.
\end{description}

\begin{rema}{FvsPiS}
For classes of small maps satisfying {\bf (PE)}, the axiom {\bf (F)} implies {\bf ($\Pi$S)}. For showing this implication for classes of display maps not necessarily satisfying {\bf (PE)}, some form of exactness seems to be required.
\end{rema}

\part*{Exact completion}

We now come to the technical heart of the paper. We present a further strengthening of the notion of a category with small maps in the form of exactness. In Section 4 we will argue both that it is a very desirable property for a category with small maps to have, and that we cannot expect every category with small maps to be exact. This motivates our work in Sections 5 and 6, where we show how every category with small maps can ``conservatively'' be embedded in an exact one. In Section 5 we show this for the basic structure, and in Section 6 for the extensions based on the presence of additional axioms for a class of small maps.

\section{Exactness and its applications}

Let us first recall the notion of exactness for ordinary categories.
\begin{defi}{exactness}
A subobject \diag{R \ar@{ >->}[r]^i & X \times X} in a cartesian category \ct{C} is called an \emph{equivalence relation} when for any object $A$ in \ct{C} the image of the injective function
\[ \mbox{Hom}(A,R) \rTo \mbox{Hom}(A, X \times X) \rTo \mbox{Hom}(A, X)^2 \]
is an equivalence relation on the set $\mbox{Hom}(A,X)$. In the presence of a class of small maps \smallmap{S}, the equivalence relation is called \emph{\smallmap{S}-bounded}, when $R$ is a \smallmap{S}-bounded subobject of $X \times X$.

A diagram of the form
\diag{
  A \ar@<.8ex>[r]^{r_0} \ar@<-.8ex>[r]_-{r_1} & B \ar[r]^q & Q
}
is called \emph{exact}, when it is both a pullback and coequaliser. The diagram is called \emph{stably exact}, when for any \func{p}{P}{Q} the diagram
\diag{
  p^{*}A \ar@<.8ex>[r]^{p^{*}r_0} \ar@<-.8ex>[r]_-{p^{*}r_1} & p^{*}B \ar[r]^{p^{*}q} & P
}
obtained by pullback is also exact. A morphism \func{q}{X}{Q} is called the \emph{(stable) quotient} of an equivalence relation \func{i}{R}{X \times X}, if the diagram
\diag{
  R \ar@<.8ex>[r]^{\pi_0i} \ar@<-.8ex>[r]_-{\pi_1i} & X \ar[r]^q & Q
}
is stably exact.

A cartesian category \ct{C} is called \emph{exact}, when every equivalence relation in \ct{C} has a quotient. A positive exact category is called a \emph{pretopos}, and a positive exact Heyting category a \emph{Heyting pretopos}.
\end{defi}

This notion of exactness is too strong for our purposes, in view of the following argument. Let \func{i}{R}{X \times X} be an equivalence relation that has a quotient \func{q}{X}{Q} in a category with small maps $(\ct{E}, \smallmap{S})$. Since diagonals belongs to \smallmap{S} and the following square is a pullback:
\diag{ R \ar@{ >->}[d]_i \ar[r] & Q \ar@{ >->}[d]^{\Delta_Q} \\
X \times X \ar[r]_{q \times q} & Q \times Q,}
$i$ belongs to \smallmap{S} by pullback stability. So all equivalence relations that have a quotient are bounded. So if one demands exactness, all equivalence relations will be bounded. The only case we see in which one can justify this consequence is in the situation were all subobjects are bounded (i.e., {\bf (M)} holds). But imposing such impredicative conditions on categories with small maps is inappropriate when studying predicative set theories like {\bf CZF}.

Two possibilities suggest themselves. One alternative would be to require the existence of quotients of \emph{bounded} equivalence relations only (the above argument makes clear that this is the maximum amount of exactness that can be demanded). The other possibility would be to drop the axiom {\bf (A9)} for a class of small maps, which requires the diagonals to be small. 

We find the first option preferable both technically and psychologically. Since objects that do not have a small diagonal play no r\^ole in the theory, it is more convenient to not have them around. Moreover, a number of our proofs depend on the fact that all diagonals are small: in particular, those of \reflemm{composlemma} and the results which make use of this lemma, and \refprop{redlemmaWtypes}. It is not clear to us if corresponding proofs can be found if not all diagonals are small. We also expect additional technical complications in the theory of sheaves when it is pursued along the lines of the second alternative. Finally, note that the ideal models in \cite{awodeyetal04} and \cite{awodeywarren05} only satisfy bounded exactness. Hence the following definition.

\begin{defi}{boundedex}
A category with small maps $(\ct{E}, \smallmap{S})$ will be called \emph{(bounded) exact}, when every $\smallmap{S}$-bounded equivalence relation has a quotient. 
\end{defi}

\begin{rema}{presofexdiagr}
Observe that a morphism \func{F}{(\ct{E}, \smallmap{S})}{(\ct{F}, \smallmap{T})} between categories with small maps, as a regular functor, will always map quotients of $\smallmap{S}$-bounded equivalence relations to quotients of $\smallmap{T}$-bounded equivalence relations.
\end{rema}

Exactness of a category with small maps has two important consequences. First of all, we can use exactness to prove that every category with a representable class of small maps satisfying the axioms {\bf ($\Pi$E)} and {\bf (WE)} contains a model of set theory. This will be \reftheo{existinitpsalg} below. 

The other important consequence, which we can only state but not explain in detail, is the existence of a sheafification functor. This is essential for developing a good theory of sheaf models in the context of Algebraic Set Theory. As is well-known (see e.g.~\cite{maclanemoerdijk92}), the sheafification functor is constructed by iterating the plus construction twice. But the plus construction is an example of a quotient construction: it builds the collection of all compatible families and then identifies those that agree on a common refinement. For this to work, some exactness is necessary. We will come back to this in subsequent work.

Another issue where exactness plays a role is the following. We have shown that any class of display maps \smallmap{S} generates a class of small maps $\smallmap{S}^{\rm cov}$ (see Section 2.2). As it turns out, showing that additional properties of \smallmap{S} are inherited by $\smallmap{S}^{\rm cov}$ sometimes seems to require the exactness of the underlying category, as will be discussed in Section 6 below.

As another application of exactness we could mention the following:
\begin{prop}{reprinJMsense}
Let $(\ct{E}, \smallmap{S})$ is an exact category with a representable class of small maps satisfying {\bf ($\Pi$E)}. Then there exists a ``universal small map'' in the sense of \cite{joyalmoerdijk95}, i.e., a representation \func{\pi'}{E'}{U'} for \smallmap{S} such that any $\func{f}{Y}{X}$ in \smallmap{S} fits into a diagram of the form
\diag{Y \ar[d]_f & A \ar[d] \ar[r] \ar[l] & E' \ar[d]^{\pi'} \\
X & B \ar[r] \ar@{->>}[l]^p & U', }
where both squares are pullbacks and $p$ is a cover.
\end{prop}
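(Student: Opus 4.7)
The plan is to construct $\pi'$ as a quotient of the representation $\pi$, identifying those elements of $U$ whose fibers under $\pi$ are isomorphic. The new base $U'$ will then index iso classes of small sets, which is exactly what is needed to promote the covering left square in the definition of a representation to a genuine pullback.

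Pulling $\pi$ back along the two projections $p_1, p_2\colon U \times U \to U$ yields small maps $E_1, E_2 \to U \times U$. By {\bf ($\Pi$E)} and the pullback stability of smallness, these are exponentiable in the slice $\ct{E}/(U \times U)$, so the exponentials $E_2^{E_1}$ and $E_1^{E_2}$ exist. Inside their fibered product over $U \times U$, the subobject
\[ I \,=\, \{ (f,g) \,:\, f g = \mathrm{id}_{E_2},\ g f = \mathrm{id}_{E_1} \} \]
is cut out by bounded conditions (using the Diagonals axiom {\bf (A9)} to express that parallel maps into the small objects $E_1$ and $E_2$ agree). Let $R \subseteq U \times U$ be the image of $I \to U \times U$; by construction $R$ is the relation ``fibers $E_{u_1}$ and $E_{u_2}$ are isomorphic'', and is an equivalence relation (reflexivity by the identity iso, symmetry by inversion, transitivity by composition).

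The first key step is to show that $R$ is $\smallmap{S}$-bounded. Although the exponentials supplied by {\bf ($\Pi$E)} need not themselves be small, one invokes the Collection axiom to produce, over a cover of the image $R$, a small map factoring through $I$, and then the Descent axiom {\bf (A2)} yields that the mono $R \hookrightarrow U \times U$ belongs to \smallmap{S}. With $R$ bounded, exactness provides the quotient $q\colon U \to U' := U/R$.

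It remains to construct $\pi'\colon E' \to U'$ and verify the universal property. The object $I$, with its canonical projections to $E_1$ and $E_2$, supplies gluing data for $\pi$ identifying the fibers over $R$, so by effective descent in the exact category $\ct{E}$ one descends $\pi$ to a small map $\pi'\colon E' \to U'$ with $q^*\pi' \cong \pi$. For universality, given any small $f\colon Y \to X$, apply representability of $\pi$ to obtain a cover $p\colon B \twoheadrightarrow X$ together with $b\colon B \to U$ such that $B \times_U E$ covers $B \times_X Y$; the composite $b' := q \circ b\colon B \to U'$ then turns the originally covering left square into a pullback, because in $U'$ the iso classes of small fibers have been identified and so the cover $B \times_U E \twoheadrightarrow B \times_X Y$ becomes an isomorphism $B \times_{U'} E' \cong B \times_X Y$. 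The right square is a pullback by construction of $E'$ via descent. The principal obstacle is the boundedness of $R$, which must be extracted from {\bf ($\Pi$E)} alone using Collection and the bounded-subobject axiom {\bf (A8)}; once this is in hand, the descent construction of $\pi'$, although delicate, is standard in an exact category.
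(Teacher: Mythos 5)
There is a genuine gap, and it sits at the very first step: passing from $U$ to $U' = U/{\cong}$ cannot work, because identifying isomorphic fibres of $\pi$ does not change which objects occur as fibres. Representability only guarantees that every small map is \emph{covered} by a pullback of $\pi$; equivalently, every small object is a quotient (the image under a cover) of some fibre $E_u$, not isomorphic to one. Since your descended map satisfies $q^*\pi' \cong \pi$, for the candidate classifying map $b' = q \circ b$ one has $B \times_{U'} E' \cong B \times_U E$, and the comparison map to $B \times_X Y$ is exactly the original cover from the definition of a representation --- nothing forces it to become an isomorphism. A small quotient of $E_u$ need not be isomorphic to any $E_v$, so the fibres of your $\pi'$ simply miss most small objects. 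There is also a secondary problem with the descent step itself: to descend $\pi$ along $q$ you need coherent descent data, i.e.\ a choice of isomorphisms between fibres satisfying a cocycle condition, whereas the relation $R$ only records that \emph{some} isomorphism exists; your object $I$ of isomorphisms has no canonical section over $R$, so ``by effective descent'' does not go through as stated.

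The paper's proof goes in the opposite direction: instead of shrinking $U$, it enlarges the family of fibres by closing it under small quotients, which is exactly where exactness enters. One takes
\[ U' = \{ (u \in U,\, v \in U,\, p : E_v \to E_u \times E_u) \, : \, \mathrm{Im}(p) \mbox{ is an equivalence relation on } E_u \}, \]
and lets the fibre of $\pi'$ over $(u,v,p)$ be the quotient $E_u/\mathrm{Im}(p)$, which exists because $\mathrm{Im}(p)$ is a bounded equivalence relation. Given a small object $X$, choose a cover $q : E_u \to X$; its kernel pair $R \subseteq E_u \times E_u$ is bounded because the diagonal of $X$ is small, hence $R$ is small and is itself covered by some $E_v$, yielding a point $(u,v,p)$ of $U'$ with $E_u/\mathrm{Im}(p) \cong X$. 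This is how every small object is realised, up to isomorphism, as a fibre of $\pi'$, which is what the pullback condition in the statement requires; no quotient of $U$ by an isomorphism relation is needed at all.
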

\begin{proof}
$U'$ will be constructed as:
\begin{eqnarray*}
 U' & =  & \{ (u \in U, v \in U, \func{p}{E_v}{E_u \times E_u}) \, :  \\
 & & {\rm Im}(p) \mbox{ is an equivalence relation on } E_u \},
\end{eqnarray*}
while the fibre of $E'$ above $(u, v, p)$ will be $E_u / {\rm Im}(p)$. To indicate briefly why this works: any small object $X$ is covered by some fibre $E_u$ via a cover \func{q}{E_u}{X}. The kernel pair of $q$ is an equivalence relation $R \subseteq E_u \times E_u$, which is bounded, since the diagonal $X \rTo X \times X$ is small. This means that $R$ is also small, whence $R$ is also covered by some $E_v$. This yields a map \func{p}{E_v}{E_u \times E_u}, whose image $R$ is an equivalence relation, with quotient $X$.
\end{proof}

All in all, it seems more than just a good idea to restrict ones attention to categories with small maps that are exact, and, indeed, that is what we will do in our subsequent work. 

The problem that now arises is that exactness is not satisfied in our informal example, where \ct{E} is the category of classes in some set theory {\bf T} and the maps in \smallmap{S} are those maps whose fibres are sets in the sense of {\bf T}. For consider an equivalence relation
\[ R \subseteq X \times X \]
on the level of classes, so $R$ and $X$ are classes, and we need to see whether it has a quotient. The problem is that the standard construction does not work: the equivalence classes might indeed be genuine classes. Of course, we are only interested in the case where the mono $R \subseteq X \times X$ is small, but even then the equivalence classes might be large.

For some set theories ${\bf T}$ this problem can be overcome: for example, if ${\bf T}$ validates a global version of the axiom of choice, one could build a quotient by choosing representatives. Or if ${\bf T}$ is the classical set theory ${\bf ZF}$ (or some extension thereof) one could use an idea which is apparently due to Dana Scott: only take those elements from an equivalence class which have minimal rank. But in case $\bf T$ is some intuitionistic set theory, like ${\bf IZF}$ or ${\bf CZF}$, this will not work: in so far a constructive theory of ordinals can be developed at all, it will fail to make them linearly ordered. Indeed, we strongly suspect that for {\bf IZF} and {\bf CZF} the category of classes will not be exact.

We will solve this problem by showing that every category with small maps can ``conservatively'' be embedded in an exact category with small maps, and even in a universal way. We will call this its exact completion.

\section{Exact completion}

The notion of exact completion we will work with is the following:

\begin{defi}{Sexcompl}
The \emph{exact completion} of a category with small maps $(\ct{E}, \smallmap{S})$ is an exact category with small maps $(\overline{\ct{E}}, \overline{\smallmap{S}})$ together with a morphism 
\[ \func{\bf y}{(\ct{E}, \smallmap{S})}{(\overline{\ct{E}}, \overline{\smallmap{S}})}, \]
in such a way that precomposing with ${\bf y}$ induces for every exact category with small maps $(\ct{F}, \smallmap{T})$ an equivalence between morphisms from $(\overline{\ct{E}}, \overline{\smallmap{S}})$ to $(\ct{F}, \smallmap{T})$ and morphisms from $(\ct{E}, \smallmap{S})$ to $(\ct{F}, \smallmap{T})$.
\end{defi}

Clearly, exact completions (whenever they exist) are unique up to equivalence. The following is the main result of this section and we will devote the remainder of this section to its proof.

\begin{theo}{existexcompl}
The exact completion of a category with small maps $(\ct{E}, \smallmap{S})$ exists, and the functor \func{{\bf y}}{(\ct{E}, \smallmap{S})}{(\overline{\ct{E}}, \overline{\smallmap{S}})} has the following properties (besides being a morphism of categories of small maps):
\begin{enumerate}
\item it is full and faithful.
\item it is covering, i.e., for every $X \in \overline{\ct{E}}$ there is an object $Y \in \ct{E}$ together with a cover ${\bf y}Y \rTo X$.
\item it is bijective on subobjects.
\item $f \in \overline{\smallmap{S}}$ iff $f$ is covered by a map of the form ${\bf y}f'$ with $f' \in \smallmap{S}$.
\end{enumerate}
\end{theo}
Note that (1) and (4) imply that {\bf y} reflects small maps.

There is an extensive literature on exact completions of ordinary categories, which we will use to prove our result (\cite{menni00} is a useful source). The next theorem summarises what we need from this theory.

\begin{defi}{ordinexcompl}
Let \ct{C} be a positive regular category. By the \emph{exact completion} of \ct{C} (or the ex/reg-completion, or the exact completion of \ct{C} as a positive regular category) we mean a positive exact category (i.e., a pretopos) $\ct{E}_{ex/reg}$ together with a positive regular morphism \func{{\bf y}}{\ct{E}}{\ct{E}_{ex/reg}} such that precomposing with ${\bf y}$ induces for every pretopos \ct{F} an equivalence between pretopos morphisms from $\ct{E}_{ex/reg}$ to \ct{F} and positive regular morphisms from \ct{E} to \ct{F}.
\end{defi}

\begin{theo}{existexcomplofordcat}
The exact completion of a positive regular category \ct{C} exists, and the functor \func{{\bf y}}{\ct{C}}{\ct{C}_{\rm ex/reg}} has the following properties (besides being a morphism of positive regular categories):
\begin{enumerate}
\item it is full and faithful.
\item it is covering, i.e., for every $X \in \overline{\ct{E}}$ there is an object $Y \in \ct{E}$ together with a cover ${\bf y}Y \rTo X$.
\end{enumerate}
\end{theo}
\begin{proof}
See \cite{lackvitale01}.
\end{proof}

Note that because $\bf y$ is a full covering functor, every map $f$ in $\ct{C}_{\rm ex/reg}$ is covered by a map of the form ${\bf y}f'$ with $f' \in \ct{C}$. We will frequently exploit this fact.

As it happens, we can describe $\ct{C}_{ex/reg}$ explicitly. Objects of $\ct{C}_{ex/reg}$ are the equivalence relations in \ct{C}, which we will denote by $X/R$ when $R \subseteq X \times X$ is an equivalence relation. Morphisms from $X/R$ to $Y/S$ are \emph{functional relations}, i.e., subobjects $F \subseteq X \times Y$ satisfying the following statements in the internal logic of \ct{E}:
\begin{displaymath}
\begin{array}{l}
\exists y \, F(x, y), \\
xRx' \land ySy' \land F(x,y) \rightarrow F(x', y'), \\
F(x, y) \land F(x, y') \rightarrow ySy'.
\end{array}
\end{displaymath}
The functor \func{{\bf y}}{\ct{C}}{\ct{C}_{\rm ex/reg}} sends objects $X$ to their diagonals \func{\Delta_X}{X}{X \times X}.

One may then verify the following facts: when $R \subseteq X \times X$ is an equivalence relation in \ct{C}, its quotient in $\ct{C}_{ex/reg}$ is precisely $X/R$. When the equivalence relation already has a quotient $Q$ in \ct{C} this will be isomorphic to $X/R$ in $\ct{C}_{ex/reg}$. This means that an exact category is its own exact completion as a regular category, and the exact completion construction is idempotent.\footnote{This applies to the exact completion of a regular category \emph{as a regular category} only.}

\begin{lemm}{exreglemma}
Let $\ct{C}_{ex/reg}$ be the exact completion of a positive regular category \ct{C} and let {\bf y} be the standard embedding.
\begin{enumerate}
\item $\bf y$ induces an isomorphism between ${\rm Sub}(X)$ and ${\rm Sub}({\bf y}X)$ for every $X \in \ct{C}$.
\item When \ct{C} is Heyting, so is $\ct{C}_{ex/reg}$, and {\bf y} preserves this structure.
\end{enumerate}
\end{lemm}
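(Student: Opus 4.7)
My plan is to leverage the properties of $\bf y$ established in \reftheo{existexcomplofordcat} (full, faithful, regular and covering) together with the explicit description of $\ct{C}_{ex/reg}$ given just before the lemma: objects are equivalence relations $X/R$, morphisms $X/R \rTo Y/S$ are functional relations $F \subseteq X \times Y$, and ${\bf y}X = X/\Delta_X$. For part (1), faithfulness of $\bf y$ makes ${\rm Sub}(X) \rTo {\rm Sub}({\bf y}X)$ injective. For surjectivity, given a mono $m \colon M \rTo {\bf y}X$ in $\ct{C}_{ex/reg}$, the covering property supplies a cover $e \colon {\bf y}A \rTo M$, and by fullness the composite $m \circ e$ is ${\bf y}f$ for some $f \colon A \rTo X$ in $\ct{C}$; the pair $(m, e)$ is thus the cover-mono factorisation of ${\bf y}f$, and since $\bf y$ is regular and so preserves images, $m \cong {\bf y}(\mathrm{Im}\,f)$.

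For part (2), I combine part (1) with the general fact that in an exact category, subobjects of a quotient are determined by their pullbacks along the quotient map. The kernel pair of the canonical cover ${\bf y}X \rTo X/R$ is ${\bf y}R \rightrightarrows {\bf y}X$, so subobjects of $X/R$ correspond bijectively to $R$-invariant subobjects $M \subseteq X$ in $\ct{C}$, that is, to those $M$ satisfying $\pi_0^{-1}M = \pi_1^{-1}M$ in ${\rm Sub}(R)$. Under this correspondence, for a functional relation $F \colon X/R \rTo Y/S$ and an $S$-invariant $L \subseteq Y$, a short internal-logic calculation using the axioms of a functional relation yields $F^{-1}L \;=\; \{x \in X : \exists y \in L,\, F(x, y)\}$, which is again $R$-invariant.

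I then define $\forall_F(M)$, for an $R$-invariant $M$, by
\[
L \;:=\; \forall_{\pi_2}\bigl( F \Rightarrow \pi_1^{-1}(M) \bigr) \;\subseteq\; Y,
\]
using the Heyting structure of $\ct{C}$ (so $\Rightarrow$ is the Heyting implication in ${\rm Sub}(X \times Y)$ and $\pi_2 \colon X \times Y \rTo Y$). Internally, $L = \{y : \forall x, F(x, y) \Rightarrow x \in M\}$. One then verifies (a) $L$ is $S$-invariant, using the functional-relation clause $F(x,y) \land ySy' \Rightarrow F(x,y')$ together with the symmetry of $S$, and (b) the adjunction $F^{-1} \dashv \forall_F$ between $R$-invariant and $S$-invariant subobjects, which follows from the formula for $F^{-1}L$ above and $\pi_2^{-1} \dashv \forall_{\pi_2}$ in $\ct{C}$. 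Preservation of the Heyting structure by $\bf y$ is then automatic: for $h \colon X \rTo Y$ in $\ct{C}$ the functional relation ${\bf y}h$ is $\mathrm{graph}(h)$, and by functionality the displayed formula simplifies to the ordinary $\forall_h M$ computed in $\ct{C}$.

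The main obstacle I anticipate is the careful verification of (a) and (b), and more broadly the systematic translation between subobjects of $X/R$ in $\ct{C}_{ex/reg}$ and $R$-invariant subobjects of $X$ in $\ct{C}$ afforded by part (1); these are purely formal manipulations in the internal logic of $\ct{C}$ relying only on the Heyting structure and the functional-relation axioms, and no new ingredients are required.
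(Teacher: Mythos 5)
Your proof of part (1) is correct and is essentially the paper's argument verbatim: cover the subobject by some ${\bf y}A$, use fullness to realise the composite as ${\bf y}f$, and transport the cover--mono factorisation of $f$ along the regular functor ${\bf y}$. For part (2), however, you take a genuinely different route. The paper never opens up the functional-relation presentation of $\ct{C}_{ex/reg}$: it covers an arbitrary morphism $g$ of $\ct{C}_{ex/reg}$ by a map of the form ${\bf y}f$ with $f \in \ct{C}$ (using that ${\bf y}$ is full and covering), notes that $\forall_{{\bf y}f}$ exists by part (1), and then defines $\forall_g = \exists_p \forall_{{\bf y}f}\, q^*$, verifying the adjunction with a short internal-logic argument that uses only the quasi-pullback property of the covering square. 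You instead identify ${\rm Sub}(X/R)$ with the $R$-invariant subobjects of $X$ and write an explicit formula $\forall_F(M) = \forall_{\pi_2}(F \Rightarrow \pi_1^{-1}M)$ for a functional relation $F$. Both arguments are sound (your invariance check for $L$ does need reflexivity of $R$ alongside symmetry of $S$, and the identification of $F^{-1}L$ deserves the careful verification you flag, but nothing fails). What the paper's approach buys is twofold: it is presentation-independent, relying only on the abstract properties of ${\bf y}$ established in \reftheo{existexcomplofordcat} and part (1); and the formula $\forall_g = \exists_p \forall_{{\bf y}f}\, q^*$ is reused directly later, in the verification of axiom {\bf (A8)} for $\overline{\smallmap{S}}$ in \reflemm{propsbar}, which your formula does not yield as readily. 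What your approach buys is a concrete, computable description of $\forall_F$ on invariant subobjects, and it makes the preservation of the Heyting structure by ${\bf y}$ an immediate simplification of the formula for graphs rather than a separate check.
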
 
\begin{proof}
To prove 1, let \func{m}{D}{{\bf y}C'} be a mono in $\ct{C}_{ex/reg}$. Using that $\bf y$ is covering, we know that there is a cover \func{e}{{\bf y}C}{D}. Then, as $\bf y$ is full, there is a map $f \in \ct{C}$ such that ${\bf y}f = me$. Then we can factor $f = m'e'$ as a cover $e'$ followed by a mono $m'$. This factorisation is preserved by $\bf y$, so ${\bf y}f = {\bf y}m' {\bf y}e'$ factors ${\bf y}f$ as a cover followed by a mono. But as such factorisations are unique up to isomorphism, ${\bf y}m' = m$ as subobjects of ${\bf y}C'$.

When \ct{C} is Heyting, all pullback functors \func{({\bf y}f)^*}{{\rm Sub}({\bf y}X)}{{\rm Sub}({\bf y}Y)} for \func{f}{Y}{X} in \ct{C} have right adjoints by (1). As $\bf y$ is covering, every morphism $g$ in $\ct{C}_{ex/reg}$ is covered by an arrow ${\bf y}f$ with $f \in \ct{C}$:
\diag{ {\bf y}X \ar[d]_{{\bf y}f} \ar@{->>}[r]^q &  A \ar[d]^g \\
{\bf y}Y \ar@{->>}[r]_p & B.}
Now $\forall_g$ can be defined as $\exists_p \forall_{{\bf y}f} q^*$. To see this, let $K \subseteq A$ and $L \subseteq B$. That $g^*L \leq K$ implies $L \leq \exists_p \forall_{{\bf y}f} q^*K$, one shows directly using that $\exists_p p^* = 1$. The converse we show by using the internal logic. So let $a \in A$ be such that $g(a) \in L$. By assumption, there is an $y \in Y$ with $p(y) = g(a)$ such that for all $x \in ({\bf y}f)^{-1}(y)$, we have $q(x) \in K$. Because the square is a quasi-pullback, there is such an $x$ with $q(x) = a$. Therefore $a \in K$, and the proof is finished.
\end{proof}

From the description of the universal quantifiers in the proof of this lemma it follows that $\ct{E}_{ex/reg}$ is also the exact completion of \ct{E} as a positive Heyting category, when \ct{E} is a positive Heyting category. More precisely, when \ct{E} is a positive Heyting category and \ct{F} is a Heyting pretopos, precomposing with {\bf y} induces an equivalence between Heyting pretopos morphisms from $\ct{E}_{ex/reg}$ to \ct{F} and positive Heyting category morphisms from \ct{E} to \ct{F}.

We return to the original problem of constructing the exact completion of a category with small maps $(\ct{E}, \smallmap{S})$. As suggested by the statement of \reftheo{existexcompl}, we single out the following class of maps $\overline{\smallmap{S}}$ in $\ct{E}_{ex/reg}$:
\begin{eqnarray*}
g \in \overline{\smallmap{S}} & \Leftrightarrow & g \mbox{ is covered by a morphism of the form } {\bf y}f \mbox{ with } f \in \smallmap{S}.
\end{eqnarray*}
In the next two lemmas, we show that this class of maps satisfies the axioms {\bf (A1-8)} for a class of small maps in $\ct{E}_{ex/reg}$. The proof is very similar to the argument we gave to show that $\smallmap{S}^{\rm cov}$ defines a class of small maps for a class of display maps $\smallmap{S}$ in Section 2.3.

\begin{lemm}{2nduseofcoll}
Any two maps \func{f}{Y}{X} and \func{g}{Z}{Y} belonging to $\overline{\smallmap{S}}$ fit into a diagram of the form
\diag{ {\bf y} Z' \ar@{->>}[r] \ar[d]_{{\bf y}g'} & Z \ar[d]^g \\
{\bf y} Y' \ar@{->>}[r] \ar[d]_{{\bf y} f'} & Y \ar[d]^{f} \\
{\bf y} X' \ar@{->>}[r] & X, }
where both squares are covering squares and $f'$ and $g'$ belong to \smallmap{S}.
\end{lemm}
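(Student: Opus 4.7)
The plan is to mimic \reflemm{1stuseofcoll}, now carried out in $\overline{\ct{E}} = \ct{E}_{ex/reg}$ but reducing to $\ct{E}$ whenever Collection is invoked. The relevant tools are that ${\bf y}$ is full, faithful, covering, preserves finite limits and covers (being a positive regular functor), and reflects covers (via the subobject bijection from \reflemm{exreglemma}).

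First I would pick, by definition of $\overline{\smallmap{S}}$, covering squares witnessing $f, g \in \overline{\smallmap{S}}$; this yields maps $f_0\colon B \to A$ and $g_0\colon D \to C$ in $\smallmap{S}$ together with covers $\alpha\colon {\bf y}B \twoheadrightarrow Y$ and $\delta\colon {\bf y}C \twoheadrightarrow Y$ (among others). The core difficulty is that Collection lives in $\ct{E}$, not in $\overline{\ct{E}}$, while the two covering squares live in $\overline{\ct{E}}$ and are a priori unrelated over $Y$. To bridge this gap I would form the pullback $P := {\bf y}B \times_Y {\bf y}C$ in $\overline{\ct{E}}$ and, using that ${\bf y}$ is covering, choose $K \in \ct{E}$ together with a cover ${\bf y}K \twoheadrightarrow P$. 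Its two legs are covers ${\bf y}K \twoheadrightarrow {\bf y}B$ and ${\bf y}K \twoheadrightarrow {\bf y}C$ in $\overline{\ct{E}}$, and by fullness and faithfulness of ${\bf y}$ combined with the fact that it reflects covers, these descend to covers $k_1\colon K \to B$ and $k_2\colon K \to C$ in $\ct{E}$ satisfying $\alpha \circ {\bf y}k_1 = \delta \circ {\bf y}k_2$.

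Next I would apply Collection in $\ct{E}$ to $f_0$ and the cover $k_1$, producing some $\lambda\colon Y' \to K$ and a covering square in $\ct{E}$ with left side $f'\colon Y' \to X' \in \smallmap{S}$. Defining $Z' := Y' \times_C D$ in $\ct{E}$ (pullback along $k_2 \circ \lambda\colon Y' \to C$) and taking $g'\colon Z' \to Y'$ to be the pullback of $g_0$ yields $g' \in \smallmap{S}$.

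It remains to verify that the two squares of the stated diagram are covering. The lower square is the juxtaposition of the ${\bf y}$-image of the Collection square with the given $f$-covering square, hence covering by \reflemm{covsqpasting}(2). The upper square is the expected main (if mild) obstacle: I would use that ${\bf y}$ preserves pullbacks to identify ${\bf y}Z' \cong {\bf y}Y' \times_{{\bf y}C} {\bf y}D$, and then pull the quasi-pullback cover ${\bf y}D \twoheadrightarrow {\bf y}C \times_Y Z$ (extracted from the given $g$-covering square) back along ${\bf y}Y' \to {\bf y}C$ to obtain the cover ${\bf y}Z' \twoheadrightarrow {\bf y}Y' \times_Y Z$ required for the quasi-pullback condition; the bottom edge ${\bf y}Y' \to Y$ of this square is a cover because it equals the composite of covers $\delta \circ {\bf y}(k_2 \lambda)$.
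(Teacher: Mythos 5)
Your proposal is correct and follows essentially the same route as the paper's proof: cover the pullback ${\bf y}B \times_Y {\bf y}C$ by an object coming from $\ct{E}$, apply Collection in $\ct{E}$ to $f_0$ and the induced cover of $B$, and obtain $g'$ by pulling back $g_0$; you merely spell out the covering-square verifications that the paper leaves implicit. One small slip in your final sentence: ${\bf y}(k_2\lambda)$ need not be a cover, since the map $\lambda\colon Y' \to K$ produced by Collection need not be one; instead observe that $\delta \circ {\bf y}(k_2\lambda) = \alpha \circ {\bf y}(k_1\lambda)$ and that $k_1\lambda\colon Y' \to B$ \emph{is} a cover (being the top edge of the Collection covering square), so the bottom edge of the upper square is a cover as required.
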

\begin{proof}
By definition of $\overline{\smallmap{S}}$, $g$ and $f$ fit a diagram of the form
\diag{ & {\bf y}D \ar[dl]_{{\bf y}g_0} \ar@{->>}[dr] & \\
{\bf y}C \ar@{->>}[dr] & & Z \ar[dl]^g \\
& Y \ar[dr]^f & \\
{\bf y}B \ar@{->>}[ur] \ar[dr]_{{\bf y}f_0} & & X \\
& {\bf y}A, \ar@{->>}[ur] & }
with $f_0, g_0 \in \smallmap{S}$ and the squares covering. By computing the pullback ${\bf y}B \times_Y {\bf y}C$ and covering this with ${\bf y}E \rTo {\bf y}B \times_Y {\bf y}C$, we obtain a diagram to which we can apply collection (in \ct{E}), resulting in:
\diag{ {\bf y}Z' \ar[dd]_{{\bf y}g'} \ar@{->>}[rrrr] & & & & {\bf y}D \ar[dl]_{{\bf y}g_0} \ar@{->>}[dr] & \\
& & & {\bf y}C \ar@{->>}[dr] & & Z \ar[dl]^g \\
{\bf y}Y' \ar[r] \ar[dd]_{{\bf y}f'} & {\bf y}E \ar@{->>}[r] & {\bf y}B \times_Y {\bf y}C \ar@{->>}[dr] \ar@{->>}[ur] & & Y \ar[dr]^f & \\
& & & {\bf y}B \ar@{->>}[ur] \ar[dr]_{{\bf y}f_0} & & X \\
{\bf y}X' \ar@{->>}[rrrr] & & & & {\bf y}A. \ar@{->>}[ur] & }
Finally, the map ${\bf y} g'$ is obtained by pulling back $g_0$, so also this map belongs to \smallmap{S}. This finishes the proof.
\end{proof}

\begin{lemm}{propsbar}
The class of maps $\overline{\smallmap{S}}$ defined above satisfies axioms {\bf (A1-8)}.
\end{lemm}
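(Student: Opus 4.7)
The plan is to follow very closely the argument for Proposition \ref{displtosmallmaps}, with \reflemm{2nduseofcoll} playing the role that \reflemm{1stuseofcoll} played there; most of the work has already been done, and the remaining task is to package it.

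First I would dispose of the easy axioms. \textbf{(A2)} and \textbf{(A6)} are immediate from the fact that $\overline{\smallmap{S}}$ is closed under covered maps, which itself follows by pasting covering squares (\reflemm{covsqpasting}(2)); \textbf{(A3)} follows by taking sums of covering squares (\reflemm{covsqpasting}(3)), since ${\bf y}$ preserves sums; \textbf{(A4)} holds because $0 \rTo 1$, $1 \rTo 1$ and $1+1 \rTo 1$ in $\ct{E}_{ex/reg}$ are of the form ${\bf y}f$ for maps $f \in \smallmap{S}$, using that ${\bf y}$ preserves the positive structure; and \textbf{(A5)} is precisely \reflemm{2nduseofcoll}, since the composite of the two vertical maps in the diagram there belongs to $\smallmap{S}$ by \textbf{(A5)} for $\smallmap{S}$ and the composite covering square covers the composite $gf$ in $\ct{E}_{ex/reg}$.

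For \textbf{(A1)} I would copy the cube argument from the proof of \refprop{displtosmallmaps}. Given a pullback of $g \in \overline{\smallmap{S}}$ along some $X \rTo V$ in $\ct{E}_{ex/reg}$, pick a covering square for $g$ with top edge ${\bf y}g'$ for $g' \in \smallmap{S}$, and cover $X$ by some ${\bf y}X_0$; then pull the chosen covering square back along the composite ${\bf y}X_0 \rTo X \rTo V$. Because ${\bf y}$ preserves pullbacks, the resulting square is of the form ${\bf y}(\text{pullback of }g')$, hence lies in $\smallmap{S}$ by pullback stability of $\smallmap{S}$, and it is a covering square covering our pullback of $g$ by \reflemm{covsqpasting}(1); so the pullback of $g$ is in $\overline{\smallmap{S}}$.

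For \textbf{(A7)} I would imitate the corresponding step in the proof of \refprop{displtosmallmaps}: given $f \in \overline{\smallmap{S}}$ covered by ${\bf y}f' \in \smallmap{S}$ and a cover $Z \twoheadrightarrow Y$ in $\ct{E}_{ex/reg}$, pull back to get a cover $P \twoheadrightarrow B$ over the domain of ${\bf y}f'$, cover $P$ by some ${\bf y}P_0$ (using that ${\bf y}$ is covering), and apply Collection for $\smallmap{S}$ in $\ct{E}$ to $f'$ and the resulting cover, then transport along ${\bf y}$. For \textbf{(A8)} I would again use the formula $\forall_f(i) = \exists_p \forall_{{\bf y}f'}(i')$ already established in \reflemm{exreglemma}, applied to the covering square from \reflemm{2nduseofcoll}: writing an $\overline{\smallmap{S}}$-bounded subobject $i$ of $Y$ as the image along the top cover of an $\smallmap{S}$-bounded subobject $i'$ of ${\bf y}Y'$ (which one can arrange, replacing $i'$ by its image and using descent, as in the display-map case), the inner $\forall_{{\bf y}f'}(i')$ is $\smallmap{S}$-bounded by \textbf{(A8)} for $\smallmap{S}$, and then $\exists_p$ of it is $\overline{\smallmap{S}}$-bounded by \textbf{(A2)}. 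The only step that requires genuine attention is checking that the reduction to a bounded subobject of ${\bf y}Y'$ can actually be carried out in $\ct{E}_{ex/reg}$, and this is where I would be most careful; but it proceeds exactly as in the \textbf{(A8)} clause of \refprop{displtosmallmaps}, using that ${\bf y}$ is bijective on subobjects (\reflemm{exreglemma}(1)).
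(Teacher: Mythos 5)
Your proposal is, in structure, the paper's own proof: the same split into easy axioms handled by \reflemm{covsqpasting}, \reflemm{2nduseofcoll} for {\bf (A5)} and {\bf (A8)}, the cube for {\bf (A1)}, Collection in \ct{E} after covering the pulled-back object for {\bf (A7)}, and the formula $\forall_f(i) = \exists_p \forall_{{\bf y}f'}({\bf y}i')$ from \reflemm{exreglemma} for {\bf (A8)}. Two justifications need repair, though neither changes the outcome. In {\bf (A1)}, the pullback ${\bf y}X_0 \times_V {\bf y}C$ is computed over the object $V$, which in general is \emph{not} in the image of ${\bf y}$, so ``${\bf y}$ preserves pullbacks'' does not by itself show that the resulting square is of the form ${\bf y}(\mbox{pullback of } g')$. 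The paper avoids this by first pulling back along $X \rTo V$ and then covering the resulting corner $P$ by some ${\bf y}A$ (using that ${\bf y}$ is covering); the composite ${\bf y}A \rTo P \rTo {\bf y}C$ is ${\bf y}$ of a map of \ct{E} by fullness, and only the pullback of ${\bf y}g'$ along \emph{that} map is computed inside the image of ${\bf y}$. Your variant can be saved by observing that ${\bf y}X_0 \times_V {\bf y}C$ embeds into ${\bf y}X_0 \times {\bf y}C \cong {\bf y}(X_0 \times C)$ and invoking bijectivity on subobjects, but some such extra step is genuinely needed. Secondly, in {\bf (A8)} the reduction to a monic $i'$ with the top square a pullback uses the Quotients axiom {\bf (A6)} for \smallmap{S} (so that the image of $i'$ is again in \smallmap{S}), not Descent; Descent, i.e.\ {\bf (A2)}, enters only at the end, to see that $\exists_p$ of an \smallmap{S}-bounded subobject is $\overline{\smallmap{S}}$-bounded, as you correctly state there.
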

\begin{proof}
The class of maps $\overline{\smallmap{S}}$ is closed under covered maps by \reflemm{covsqpasting}, so {\bf (A2)} and {\bf (A6)} follow immediately. The axiom {\bf (A3)} follows from \reflemm{covsqpasting} as well, combined with the fact that {\bf y} preserves the positive structure. {\bf (A4)} follows because {\bf y} preserves the lextensive structure, and {\bf (A5)} follows from the previous lemma. Verifying the other axioms is more involved.

{\bf (A1)}: Assume $f \in \ct{E}_{ex/reg}$ can be obtained by pullback from a map $g \in \overline{\smallmap {S}}$. Then $f$ and $g$ fit into a diagram as follows:
\begin{displaymath}
\xymatrix@!0{
& & & {\bf y}D  \ar@{->>}[rr] \ar'[d]^{{\bf y}g'}[dd] & & W \ar[dd]^g \\
{\bf y}B \ar@{->>}[rr] \ar[dd]_{{\bf y}f'} & & Q \ar[ur]\ar@{->>}[rr]\ar[dd] & & Y \ar[dd]^(.3)f \ar[ur]  \\
& & & {\bf y}C \ar@{->>}'[r][rr] & & V. \\
{\bf y}A \ar@{->>}[rr] & & P \ar@{->>}[rr] \ar[ur] & & X \ar[ur] }
\end{displaymath}
The picture has been constructed in several steps. First, we obtain at the back of the cube a covering square involving a map ${\bf y}g'$ with $g' \in \smallmap{S}$ by definition of $\overline{\smallmap{S}}$. Next, this square is pulled back along the map $X \rTo V$, making the front covering as well (by \reflemm{covsqpasting}), and the other faces pullbacks. Finally, we obtain a cover ${\bf y}A \rTo P$, using that {\bf y} is covering, and ${\bf y}B$ by pullback, using that {\bf y} preserves pullbacks. By pullback stability of \smallmap{S}, $f' \in \smallmap{S}$, so that $f \in \overline{\smallmap{S}}$.

{\bf (A7)}: Let $\func{f}{Y}{X} \in \overline{\smallmap{S}}$ and a cover $Z \rTo Y$ be given. We obtain a diagram as follows, again constructed in several steps.
\begin{displaymath}
\xymatrix@!0{
& & & & & Z  \ar@{->>}[rr]  & & Y \ar[dd]^f \\
{\bf y}D \ar[dd]_{{\bf y}g'} \ar[rr] & & {\bf y}E \ar@{->>}[rr]  & & P \ar[ur]\ar@{->>}[rr] & & {\bf y}B \ar[dd]^(.4){{\bf y}f'} \ar[ur]  \\
& & & & & & & X. \\
{\bf y}C \ar@{->>}[rrrrrr] & &  & &  & & {\bf y}A \ar[ur] }
\end{displaymath}
First, we find a map ${\bf y}f'$ with $f' \in \smallmap{S}$ covering $f$. Next, we obtain the object $P$ by pullback, and we let ${\bf y}E$ be an object covering $P$. Finally, we apply Collection in \ct{E} to $f'$ and the cover $E \rTo B$ to get a map $g' \in \smallmap{S}$ covering $f'$ in \ct{E}. As covering squares are preserved by {\bf y}, it follows that ${\bf y}g'$ covers ${\bf y}f'$, and hence also $f$.

{\bf (A8)}: Let $\func{f}{Y}{X}$ be a map belonging to $\overline{\smallmap{S}}$, and let $A$ be an $\overline{\smallmap{S}}$-bounded subobject of $Y$. Using the previous lemma, we obtain a diagram 
\diag{ {\bf y} A' \ar@{->>}[r] \ar[d]_{{\bf y}i'} & A \ar@{ >->}[d]^i \\
{\bf y} Y' \ar@{->>}[r]^q \ar[d]_{{\bf y} f'} & Y \ar[d]^{f} \\
{\bf y} X' \ar@{->>}[r]_p & X, }
with $i', f' \in \smallmap{S}$ and both squares covering. As \smallmap{S} satisfies the quotient axiom {\bf (A6)}, we may actually assume that the top square is a pullback and $i'$ is monic. Observe that the proof of \reflemm{exreglemma} yields the formula $\forall_f (i) = \exists_p \forall_{{\bf y}f'} ({\bf y}i')$. But $\forall_{{\bf y}f'} ({\bf y}i')$ is an \smallmap{S}-bounded subobject of ${\bf y}X'$ as {\bf (A8)} holds for \smallmap{S}, and then $\exists_p \forall_{{\bf y}f'} ({\bf y}i')$ is an $\overline{\smallmap{S}}$-bounded subobject of $X$ by Descent for $\overline{\smallmap{S}}$.
\end{proof}

The problem with the pair $(\ct{E}_{ex/reg}, \overline{\smallmap{S}})$ is that it does not satisfy axiom {\bf (A9)} (in general). Therefore, call an object $X$ \emph{separated} relative to a class of maps \smallmap{T}, when the diagonal $X \rTo X \times X$ belongs to \smallmap{T}. We will write ${\rm Sep}_{\smallmap{T}}(\ct{E})$ for the full subcategory of \ct{E} consisting of the separated objects. Using this notation we define
\[ \overline{\ct{E}} = {\rm Sep}_{\overline{\smallmap{S}}}(\ct{E}_{ex/reg}). \]

\begin{lemm}{propofebar}
$(\overline{\ct{E}}, \overline{\smallmap{S}})$ is an exact category with small maps.
\end{lemm}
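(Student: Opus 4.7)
The plan is to verify three things in succession: first, that $\overline{\ct{E}}$ inherits a positive Heyting structure from $\ct{E}_{ex/reg}$; second, that $\overline{\smallmap{S}}$ restricts to a class of small maps on $\overline{\ct{E}}$; third, bounded exactness.

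For the first point, I will show that the separated objects are closed under the positive Heyting operations. The terminal object is separated because its diagonal is an isomorphism. If $X$ and $Y$ are separated then so is $X \times Y$, since $\Delta_{X \times Y}$ is, up to the canonical isomorphism $(X \times Y)^2 \cong X^2 \times Y^2$, the map $\Delta_X \times \Delta_Y$, which belongs to $\overline{\smallmap{S}}$ by closure under pullback and composition (axioms {\bf (A1)} and {\bf (A5)} of \reflemm{propsbar}). Any subobject $A \subseteq X$ of a separated object is itself separated, because $\Delta_A$ is the pullback of $\Delta_X$ along $A \times A \to X \times X$, so $\Delta_A \in \overline{\smallmap{S}}$ by {\bf (A1)}. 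This simultaneously handles pullbacks, equalisers, and regular-epi/mono images (thus giving regularity of $\overline{\ct{E}}$). Binary coproducts of separated objects are separated by distributivity together with {\bf (A3)}. Finally, since subobject lattices in $\overline{\ct{E}}$ coincide with those in $\ct{E}_{ex/reg}$, the Heyting structure established in \reflemm{exreglemma} restricts.

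For the second point, axioms {\bf (A1)}--{\bf (A8)} for $\overline{\smallmap{S}}$ were already verified in $\ct{E}_{ex/reg}$ in \reflemm{propsbar}. Provided one starts with objects of $\overline{\ct{E}}$, every auxiliary object appearing in those proofs is either of the form ${\bf y}Z$ for some $Z \in \ct{E}$ (and such objects are separated, since ${\bf y}\Delta_Z$ is the image under $\bf y$ of a map in $\smallmap{S}$ and hence lies in $\overline{\smallmap{S}}$), or a pullback/subobject of objects that are already separated. Hence those arguments restrict to $\overline{\ct{E}}$, while {\bf (A9)} holds by the very definition of $\overline{\ct{E}}$.

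The main content lies in the third point, and I expect the only real obstacle to be the verification that quotients of bounded equivalence relations remain separated. So let $R \subseteq X \times X$ be an $\overline{\smallmap{S}}$-bounded equivalence relation in $\overline{\ct{E}}$. Since $\ct{E}_{ex/reg}$ is a Heyting pretopos we may form the stably exact quotient $q : X \to Q$ there. The key claim is that $Q$ is separated. To see this, observe that $R$ is precisely the kernel pair of $q$, so the square
\diag{ R \ar[r] \ar[d] & Q \ar[d]^{\Delta_Q} \\
X \times X \ar@{->>}[r]_{q \times q} & Q \times Q }
is a pullback. Its bottom arrow is a cover (products of covers are covers in a regular category), and its left arrow lies in $\overline{\smallmap{S}}$ by boundedness of $R$; so Descent {\bf (A2)} for $\overline{\smallmap{S}}$ forces $\Delta_Q \in \overline{\smallmap{S}}$, giving $Q \in \overline{\ct{E}}$. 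Since $\overline{\ct{E}}$ is full in $\ct{E}_{ex/reg}$ and closed under pullbacks by the first point, the stable exactness of $R \rightrightarrows X \to Q$ is inherited in $\overline{\ct{E}}$, completing the proof.
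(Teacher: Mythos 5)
Your proposal is correct and follows essentially the same route as the paper: closure of separated objects under the positive Heyting operations, the observation that Collection is the only small-maps axiom needing care (resolved because the covering maps produced in \reflemm{propsbar} are of the form ${\bf y}g'$ between separated objects), and separatedness of quotients of bounded equivalence relations via Descent applied to the kernel-pair pullback square. Your version merely spells out a few routine details (e.g.\ stability of the exact quotient diagram under restriction to the full subcategory) that the paper leaves implicit.
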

\begin{proof}
Essentially a routine exercise. $\overline{\ct{E}}$ is a Heyting category, because the terminal object is separated, and separated objects are closed under products and subobjects. Separated objects are also closed under sums, so that $\overline{\ct{E}}$ is a positive Heyting category.

In showing that $\overline{\smallmap{S}}$ is a class of small maps, the only difficulty is proving that it satisfies the Collection axiom {\bf (A7)}. But note that in the proof of the previous lemma, while showing that $\overline{\smallmap{S}}$ satisfies the axiom {\bf (A7)} in $\ct{E}_{ex/reg}$, we showed a bit more: we actually proved that, in the notation we used there, the map covering $f$ could be chosen to be of the form ${\bf y}g'$. But this is a map between separated objects, since all objects of the form ${\bf y}X$ are separated.

To prove that $(\overline{\ct{E}}, \overline{\smallmap{S}})$ is exact, it suffices to show that the quotient \func{q}{X}{Q} in $\ct{E}_{ex/reg}$ of an $\overline{\smallmap{S}}$-bounded equivalence relation $R \subseteq X \times X$ is separated. That follows from Descent for $\overline{\smallmap{S}}$ in $\ct{E}_{ex/reg}$, as the following square is a pullback:
\diag{ R \ar[rr] \ar@{ >->}[d] & & Q \ar@{ >->}[d] \\
X \times X \ar@{->>}[rr]_{q \times q} & & Q \times Q. }
\end{proof}

Let's see to what extent we have established \reftheo{existexcompl}. Since objects of the form ${\bf y}X$ are separated, the morphism \func{{\bf y}}{\ct{E}}{\ct{E}_{ex/reg}} factors through $\overline{\ct{E}}$. It is clear that {\bf y} considered as functor $\ct{E} \rTo \overline{\ct{E}}$ is still a morphism of positive Heyting categories satisfying items (1) and (2) from \reftheo{existexcompl}. It is immediate from the definition of $\overline{\smallmap{S}}$ that it preserves small maps, so that
\[ \func{\bf y}{(\ct{E}, \smallmap{S})}{(\overline{\ct{E}}, \overline{\smallmap{S}})} \]
is indeed a morphism of categories with small maps. Furthermore, it also satisfies item (3), because {\bf y} it is bijective on subobjects by \reflemm{exreglemma}, and the definition of $\overline{\smallmap{S}}$ was made so as to make it satisfy item (4) as well.

Therefore, to complete the proof of \reftheo{existexcompl}, it remains to show the universal property of $(\overline{\ct{E}}, \overline{\smallmap{S}})$. For this we use:
\begin{lemm}{excidempot}
For an exact category with small maps $(\ct{F}, \smallmap{T})$, we have that
\[ (\ct{F}, \smallmap{T}) \cong (\overline{\ct{F}}, \overline{\smallmap{T}}). \]
\end{lemm}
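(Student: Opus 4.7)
The plan is to show that the canonical embedding ${\bf y}$ itself witnesses the claimed isomorphism. The argument has two main ingredients: first, that ${\bf y}$ is already an equivalence onto $\overline{\ct{F}}$ at the level of underlying categories, and second, that $\overline{\smallmap{T}}$ pulls back to exactly $\smallmap{T}$ along this equivalence.

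For the first ingredient, I would invoke the idempotency remark made just after \reftheo{existexcomplofordcat}: since $\ct{F}$ is exact, every equivalence relation $R\subseteq X\times X$ in $\ct{F}$ already has a quotient in $\ct{F}$, and from the description of $\ct{F}_{ex/reg}$ in terms of functional relations it follows that ${\bf y}\colon \ct{F}\to \ct{F}_{ex/reg}$ is an equivalence of positive Heyting categories. It remains to check that $\overline{\ct{F}}$, defined as the full subcategory of separated objects in $\ct{F}_{ex/reg}$, is all of $\ct{F}_{ex/reg}$. But every object of the form ${\bf y}X$ is separated, since its diagonal is ${\bf y}\Delta_X$ (as ${\bf y}$ preserves finite limits), and $\Delta_X\in\smallmap{T}$ by axiom {\bf (A9)} for $\smallmap{T}$, so ${\bf y}\Delta_X\in\overline{\smallmap{T}}$ by the very definition of $\overline{\smallmap{T}}$. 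Combined with essential surjectivity, this gives $\overline{\ct{F}}=\ct{F}_{ex/reg}$ and ${\bf y}\colon \ct{F}\to\overline{\ct{F}}$ is an equivalence of underlying categories.

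For the second ingredient, I would verify that $f\in\smallmap{T}$ iff ${\bf y}f\in\overline{\smallmap{T}}$. The forward implication is immediate from the definition of $\overline{\smallmap{T}}$. For the converse, suppose ${\bf y}f\in\overline{\smallmap{T}}$; then there is a covering square in $\ct{F}_{ex/reg}$ whose right edge is ${\bf y}f$ and whose left edge is ${\bf y}g$ for some $g\in\smallmap{T}$. Using that ${\bf y}$ is an equivalence that preserves the positive regular structure (and hence, being an equivalence, also reflects it), this covering square transports back to a covering square in $\ct{F}$ exhibiting $f$ as covered by $g$. The Covered Maps axiom {\bf (A2)}/{\bf (A6)} for $\smallmap{T}$ then forces $f\in\smallmap{T}$.

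The main obstacle to watch out for is the transport of the covering square in the last step: one must genuinely use that ${\bf y}$ is full, faithful and preserves finite limits and covers, so that covering squares between essentially representable objects correspond bijectively on the two sides. Alternatively, instead of appealing to the transport via the equivalence, one can work directly with items (1)--(4) of \reftheo{existexcompl}: fullness and bijectivity on subobjects (\reflemm{exreglemma}) imply that the image factorisation and the pullback witnessing the covering square in $\ct{F}_{ex/reg}$ are already realised by morphisms and subobjects coming from $\ct{F}$. Either way, once the two ingredients are in place, ${\bf y}$ is the desired isomorphism of categories with small maps.
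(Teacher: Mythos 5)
There is a genuine gap in your first ingredient, and it stems from a misreading of the hypothesis. In this paper ``exact'' for a category with small maps means \emph{bounded} exact (\refdefi{boundedex}): only the $\smallmap{T}$-bounded equivalence relations are required to have quotients in $\ct{F}$. It is therefore not true that every equivalence relation in $\ct{F}$ has a quotient, and consequently ${\bf y}\colon \ct{F}\to\ct{F}_{ex/reg}$ is \emph{not} in general an equivalence of categories --- the idempotency remark after \reftheo{existexcomplofordcat} applies only to categories that are exact in the ordinary, unbounded sense (see the footnote there), and the paper explicitly argues in Section 4 that demanding unbounded exactness of a category with small maps is inappropriate. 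For the same reason your claim that $\overline{\ct{F}}=\ct{F}_{ex/reg}$ fails: the quotient $X/R$ of an unbounded equivalence relation need not be separated, and the whole point of cutting down to the separated objects is to discard such quotients. Since your argument for essential surjectivity of ${\bf y}$ onto $\overline{\ct{F}}$ proceeds via essential surjectivity onto all of $\ct{F}_{ex/reg}$, it collapses.

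The correct route, which is the one the paper takes, goes through the separatedness hypothesis rather than around it: every object of $\overline{\ct{F}}$ is of the form $X/R$ for some equivalence relation $R\subseteq X\times X$ in $\ct{F}$; because $X/R$ is $\overline{\smallmap{T}}$-separated, $R$ is the pullback of the small diagonal of $X/R$ along $X\times X\to X/R\times X/R$ and hence $\overline{\smallmap{T}}$-bounded, therefore $\smallmap{T}$-bounded since ${\bf y}$ reflects small maps; bounded exactness of $(\ct{F},\smallmap{T})$ now supplies a quotient $Q$ in $\ct{F}$, and since ${\bf y}$ preserves quotients of bounded equivalence relations, ${\bf y}Q\cong X/R$. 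Your second ingredient (that $f\in\smallmap{T}$ iff ${\bf y}f\in\overline{\smallmap{T}}$) is correct, but as stated it again leans on ${\bf y}$ being an equivalence; the clean justification is the one already recorded in the remark following \reftheo{existexcompl}, namely that items (1) and (4) there imply ${\bf y}$ reflects small maps. So the real content of the lemma is precisely the essential surjectivity onto the separated objects, which your proposal does not establish.
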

\begin{proof}
It suffices to point out that \func{{\bf y}}{\ct{F}}{\overline{\ct{F}}} is essentially surjective on objects. We know that every object in $\overline{\ct{F}}$ arises as a quotient $X/R$ of an equivalence relation $R \subseteq X \times X$ in \ct{E}. But we can say more: $X/R$ is $\overline{\smallmap{T}}$-separated, so the equivalence relation $R \subseteq X \times X$ is $\overline{\smallmap{T}}$-bounded, and therefore also \smallmap{T}-bounded, because {\bf y} reflects small maps. So a quotient $Q$ of this equivalence relation already exists in \ct{F}, and as this is preserved by {\bf y}, we get that ${\bf y}Q \cong R/X$.
\end{proof}

So let $(\ct{F}, \smallmap{T})$ be an exact category with small maps, and \func{F}{(\ct{E}, \smallmap{S})}{(\ct{F}, \smallmap{T})} be a morphism of categories with small maps. Consider the exact completion $\ct{F}_{ex/reg}$ of \ct{F}, together with \func{{\bf y}}{\ct{F}}{\ct{F}_{ex/reg}}. Then there is an exact morphism \func{\overline{F}}{\ct{E}_{ex/reg}}{\ct{F}_{ex/reg}} such that ${\bf y}F \cong \overline{F} {\bf y}$, by the universal property of $\ct{E}_{ex/reg}$. This morphism $\overline{F}$ also preserves the positive and Heyting structure of $\ct{E}_{ex/reg}$, and, moreover, sends morphisms in $\overline{\smallmap{S}}$ to those in $\overline{\smallmap{T}}$. Therefore $\overline{F}$ restricts to a functor between the separated objects in $\ct{E}_{ex/reg}$ and those in $\ct{F}_{ex/reg}$, that is, a functor between categories with small maps from $(\overline{\ct{E}}, \overline{\smallmap{S}})$ to $(\ct{F}, \smallmap{T})$. This completes the proof of \reftheo{existexcompl}.

\begin{rema}{expldescrexc}
The question arises as to whether we can describe the category $\overline{\ct{E}}$ more concretely, i.e., if we can identify those objects in $\ct{E}_{ex/reg}$ that belong to $\overline{\ct{E}}$. As was implicitly shown in the proof of \reflemm{excidempot}, these are precisely the bounded equivalence relations.
\end{rema}

\begin{rema}{excomplslicing}
An important property of exact completions is their stability under slicing. By this we mean that for any category with small (or display) maps $(\ct{E}, \smallmap{S})$ and object $X$ in \ct{E},
\[ (\overline{\ct{E}/X}, \overline{\smallmap{S}/X}) \cong (\overline{\ct{E}}/{\bf y}X, \overline{\smallmap{S}}/{\bf y}X). \]
A formal proof is left to the reader.
\end{rema}

\begin{rema}{excfordisplaymaps}
When we combine \reftheo{existexcompl} with our earlier work on display maps, we obtain the following result:
\begin{coro}{existexcompldispl}
For every category with display maps $(\ct{E}, \smallmap{S})$ there exists an exact category with small maps $(\ct{F}, \smallmap{T})$ together with a functor \func{\bf y}{\ct{E}}{\ct{F}} of positive Heyting categories with the following properties:
\begin{enumerate}
\item it is full and faithful.
\item it is covering.
\item it is a bijection on subobjects.
\item $f \in \overline{\smallmap{S}}$ iff $f$ is covered by a map of the form ${\bf y}f'$ with $f' \in \smallmap{S}$.
\end{enumerate}
\end{coro}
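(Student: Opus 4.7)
The plan is to reduce to the case of small maps that has already been treated. Given a category with display maps $(\ct{E}, \smallmap{S})$, \refprop{displtosmallmaps} provides the generated class of small maps $\smallmap{S}^{\rm cov}$ in \ct{E}, so that $(\ct{E}, \smallmap{S}^{\rm cov})$ is a category with small maps. I would then simply define $(\ct{F}, \smallmap{T}) := (\overline{\ct{E}}, \overline{\smallmap{S}^{\rm cov}})$ and take $\bf y$ to be the embedding provided by \reftheo{existexcompl} applied to $(\ct{E}, \smallmap{S}^{\rm cov})$.

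With this definition, items (1), (2) and (3) of the corollary are literally items (1), (2) and (3) of \reftheo{existexcompl} and require nothing further. What still has to be checked is item (4), since the definition of $\overline{\smallmap{S}^{\rm cov}}$ given by \reftheo{existexcompl} says that $f \in \overline{\smallmap{S}^{\rm cov}}$ iff $f$ is covered by some ${\bf y}g$ with $g \in \smallmap{S}^{\rm cov}$, whereas the statement of the corollary asks for a cover by some ${\bf y}f'$ with $f' \in \smallmap{S}$. So the content of the proof is really just the verification that these two conditions coincide.

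One direction is trivial: since $\smallmap{S} \subseteq \smallmap{S}^{\rm cov}$, any map covered by a ${\bf y}f'$ with $f' \in \smallmap{S}$ is a fortiori covered by a ${\bf y}g$ with $g \in \smallmap{S}^{\rm cov}$, hence lies in $\overline{\smallmap{S}^{\rm cov}}$. For the converse, suppose $f$ is covered by ${\bf y}g$ with $g \in \smallmap{S}^{\rm cov}$. By the very definition of $\smallmap{S}^{\rm cov}$ in \refprop{displtosmallmaps}, there is a covering square in \ct{E} exhibiting $g$ as covered by some $f' \in \smallmap{S}$. Since $\bf y$ is a morphism of positive Heyting categories, it preserves covers and pullbacks, hence covering squares, so ${\bf y}g$ is covered by ${\bf y}f'$ in $\overline{\ct{E}}$. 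Combining this with the covering square exhibiting ${\bf y}g$ as a cover of $f$ and applying the pasting property of covering squares from \reflemm{covsqpasting}, we conclude that $f$ itself is covered by ${\bf y}f'$ with $f' \in \smallmap{S}$, as required.

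I do not anticipate any real obstacle here; the proof is essentially bookkeeping, with the only nontrivial ingredient being the pasting of covering squares (\reflemm{covsqpasting}) combined with preservation of covering squares by the exact completion embedding. Everything else, including bijection on subobjects, fullness, faithfulness, and coveringness, is inherited directly from \reftheo{existexcompl}.
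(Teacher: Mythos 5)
Your proposal is correct and follows exactly the paper's route: the paper likewise defines $(\ct{F}, \smallmap{T})$ as the exact completion $(\overline{\ct{E}}, \overline{\smallmap{S}^{\rm cov}})$ of $(\ct{E}, \smallmap{S}^{\rm cov})$ and inherits (1)--(3) from \reftheo{existexcompl}. Your verification of item (4) -- pasting the covering square from the definition of $\smallmap{S}^{\rm cov}$ with the one from $\overline{\smallmap{S}^{\rm cov}}$ via \reflemm{covsqpasting}, using that ${\bf y}$ preserves covering squares -- is precisely the bookkeeping the paper leaves implicit.
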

For $(\ct{F}, \smallmap{T})$ we can simply take the exact completion $(\overline{\ct{E}}, \overline{\smallmap{S}^{\rm cov}})$ of $(\ct{E}, \smallmap{S}^{\rm cov})$. By abuse of terminology and notation, we will refer to this category as the \emph{exact completion of the category with display maps} $(\ct{E}, \smallmap{S})$, and denote it by $(\overline{\ct{E}}, \overline{\smallmap{S}})$ as well. 

To abuse terminology even further, we will call a category with display maps $(\ct{E}, \smallmap{S})$ \emph{(bounded) exact}, when $(\ct{E}, \smallmap{S}^{\rm cov})$ is a (bounded) exact category with small maps. Note that for an exact category with display maps, $(\overline{\ct{E}}, \overline{\smallmap{S}}) = (\ct{E}, \smallmap{S}^{\rm cov})$.

Actually, as is not too hard to see using the results obtained in this section, the properties of $(\ct{F}, \smallmap{T})$ and {\bf y} formulated in the Corollary determine these uniquely up to equivalence. \emph{A fortiori}, the same remark applies to \reftheo{existexcompl}.
\end{rema}

\section{Stability properties of axioms for classes of small maps}

In this -- rather technical -- section of the paper we want to show, among other things, the stability under exact completion of additional axioms for a class of small maps. The importance of this resides in the fact that many of these axioms are needed to model the axioms of {\bf IZF} and {\bf CZF}. So this section makes sure that in studying these set theories we can safely restrict our attention to exact categories with small maps.

We should point out that we are not able to show the stability of all the axioms we mentioned in Section 3 under exact completion. In fact, we conjecture that {\bf ($\Pi$S)} and {\bf (WS)} are not. But, fortunately, these axioms are not necessary for modelling either {\bf IZF} or {\bf CZF}.
%\footnote{In our set-up, it is a fortunate accident that {\bf CZF} has been formulated with Subset Collection instead of the weaker Exponentiation Axiom, for {\bf ($\Pi$S)} is probably not stable under exact completion, while {\bf (F)} is!}

But for those axioms for which we can show stability, we will actually be able to show something slightly stronger: we will show that their validity is preserved by the exact completion $(\overline{\ct{E}}, \overline{\smallmap{S}})$, assuming only that $(\ct{E}, \smallmap{S})$ is a category with \emph{display maps} (see \refrema{excfordisplaymaps}). It is in this form we will need the results from this section in our subsequent work on realisability, for in that case the appropriate category with small maps is constructed using display maps (our paper \cite{bergmoerdijk07a} gives the idea). 

So in this section, $(\ct{E}, \smallmap{S})$ will be a category with display maps, unless explicitly stated otherwise.

Simultaneously, we will discuss which of the axioms are inherited by covered maps (i.e., by $\smallmap{S}^{\rm cov}$ from $\smallmap{S}$). The reason why we discuss this question in parallel with the other one is that the proofs of their stability (in case they are stable) are almost identical. So what we will typically do is show stability under exact completion and then point out that an almost identical proof shows stability under covered maps. In some cases the argument only works for exact categories with display maps. When this is the case, we will point this out as well.

\subsection{Representability}

\begin{prop}{presreprcov}
Let $(\ct{E}, \smallmap{S})$ a category with display maps. Then a representation for $\smallmap{S}$ is also a representation for $\smallmap{S}^{\rm cov}$. Indeed, \smallmap{S} is representable iff $\smallmap{S}^{\rm cov}$ is. 
\end{prop}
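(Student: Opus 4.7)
The plan is to verify each direction of the equivalence using little more than the pasting and pullback-stability properties of covering squares from \reflemm{covsqpasting}, together with pullback stability of small maps.

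For the first assertion, suppose $\pi\colon E\to U$ in \smallmap{S} is a representation for \smallmap{S}. Since $\smallmap{S} \subseteq \smallmap{S}^{\rm cov}$ we have $\pi \in \smallmap{S}^{\rm cov}$. To see that every $f\in\smallmap{S}^{\rm cov}$ is covered by a pullback of $\pi$: by definition of $\smallmap{S}^{\rm cov}$ there is an $f'\in\smallmap{S}$ covering $f$. Applying the hypothesis that $\pi$ represents \smallmap{S}, we find a cover $B\twoheadrightarrow X'$ (where $f'\colon Y' \to X'$) and a map $B\to U$ such that the pullback $g^{*}\pi$ of $\pi$ along $B\to U$ covers $f'$. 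Pasting these two covering squares via part (2) of \reflemm{covsqpasting} yields that $g^{*}\pi$ (a pullback of $\pi$, and hence in \smallmap{S} by pullback stability) covers $f$, as desired.

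For the converse direction, assume $\pi'\colon E'\to U'$ is a representation for $\smallmap{S}^{\rm cov}$; in particular $\pi'\in\smallmap{S}^{\rm cov}$, so there exists $\pi\colon E''\to U''$ in \smallmap{S} together with a covering square exhibiting $\pi$ as a cover of $\pi'$. I claim $\pi$ is a representation for \smallmap{S}. Let $f\in\smallmap{S}$; then $f\in\smallmap{S}^{\rm cov}$, so there is a map $g\colon B\to U'$ and a cover $B\twoheadrightarrow X$ making the pullback $g^{*}\pi'$ cover $f$. Now pull back the covering square relating $\pi$ and $\pi'$ along $g\colon B\to U'$. By part (1) of \reflemm{covsqpasting}, the resulting square is again covering, and its left-hand vertical $g^{*}\pi$ is by construction also a pullback of $\pi$ along the composite $B\times_{U'}U''\to U''$. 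Pasting once more gives a covering square in which $g^{*}\pi$ (a pullback of $\pi$) covers $f$, which establishes that $\pi$ represents \smallmap{S}.

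The main subtlety is purely bookkeeping: the two directions both reduce to organizing one ``horizontal'' covering square (from the definition of $\smallmap{S}^{\rm cov}$ or from the hypothesis that $\pi$/$\pi'$ represents the appropriate class) and one ``vertical'' covering square (from a pullback), and then pasting them. No axiom beyond pullback stability of \smallmap{S} and the elementary closure properties of covering squares is needed, so I do not anticipate any real obstacle; the proof should occupy only a few lines once the two diagrams are drawn.
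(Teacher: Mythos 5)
Your proof is correct and matches the paper's intended argument: the paper omits the proof as routine, remarking only that the key insight is that a (small) map covering a representation is again a representation, which is precisely the content of your converse direction, while your forward direction is the routine pasting of covering squares the paper alludes to. No gaps.
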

\begin{prop}{presrepr}
Let \func{{\bf y}}{(\ct{E}, \smallmap{S})}{(\overline{\ct{E}}, \overline{\smallmap{S}})} be the exact completion of a category with display maps. Then \smallmap{S} is representable iff $\overline{\smallmap{S}}$ is. Moreover, {\bf y} preserves and reflects representations.
\end{prop}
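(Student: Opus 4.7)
The plan is to establish both assertions by moving covering squares back and forth between \ct{E} and $\overline{\ct{E}}$, exploiting the four properties of ${\bf y}$ listed in \reftheo{existexcompl}. Throughout, the key observation is that every $g \in \overline{\smallmap{S}}$ is covered by some morphism ${\bf y}f$ with $f \in \smallmap{S}$: by definition of $\overline{\smallmap{S}}$ (together with \refrema{excfordisplaymaps}), $g$ is covered by ${\bf y}h$ for some $h \in \smallmap{S}^{\rm cov}$, and $h$ is in turn covered by some $f \in \smallmap{S}$; since ${\bf y}$ preserves pullbacks and covers, composing the two covering squares (by \reflemm{covsqpasting}) yields the claim. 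Note also that ${\bf y}$ reflects covers, since it is fully faithful and bijective on subobjects.

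For the preservation direction, suppose $\pi \colon E \to U$ is a representation for \smallmap{S}. Given $g \in \overline{\smallmap{S}}$, first cover $g$ by ${\bf y}f$ with $f \in \smallmap{S}$, then cover $f$ by a pullback of $\pi$; applying ${\bf y}$ to the latter diagram and composing the two resulting covering squares exhibits $g$ as covered by a pullback of ${\bf y}\pi$, so ${\bf y}\pi$ represents $\overline{\smallmap{S}}$.

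For reflection, suppose $\pi \in \smallmap{S}$ is such that ${\bf y}\pi$ represents $\overline{\smallmap{S}}$, and let $f \colon Y \to X$ lie in \smallmap{S}. Since ${\bf y}f \in \overline{\smallmap{S}}$, pick a covering-pullback diagram
\diag{
{\bf y}Y \ar[d]_{{\bf y}f} & A \ar[d] \ar[r] \ar@{->>}[l] & {\bf y}E \ar[d]^{{\bf y}\pi} \\
{\bf y}X & B \ar[r]^{p} \ar@{->>}[l]^{q} & {\bf y}U.
}
Using that ${\bf y}$ is covering, choose a cover ${\bf y}B_0 \to B$; fullness of ${\bf y}$ factors the composites ${\bf y}B_0 \to B \to {\bf y}U$ and ${\bf y}B_0 \to B \to {\bf y}X$ as ${\bf y}p_0$ and ${\bf y}q_0$ for unique $p_0 \colon B_0 \to U$ and $q_0 \colon B_0 \to X$ in \ct{E}, and $q_0$ is a cover because ${\bf y}$ reflects covers. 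Set $A_0 := p_0^{*}E$ in \ct{E}. Pulling the left covering square back along ${\bf y}B_0 \to B$ produces (by \reflemm{covsqpasting}) a cover ${\bf y}A_0 \to {\bf y}(q_0^{*}Y)$ in $\overline{\ct{E}}$, which via fullness, faithfulness and the reflection of covers descends to a cover $A_0 \to q_0^{*}Y$ in \ct{E}. This exhibits $f$ as covered by the pullback of $\pi$ along $p_0$, so $\pi$ represents \smallmap{S}.

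The equivalence of representability follows at once from these two directions: one implication is immediate from preservation, and for the converse, if some $\pi$ represents $\overline{\smallmap{S}}$, cover it by some ${\bf y}\pi'$ with $\pi' \in \smallmap{S}$; pullback-stability of covering squares makes ${\bf y}\pi'$ itself a representation for $\overline{\smallmap{S}}$, and the reflection step then forces $\pi'$ to represent \smallmap{S}. The main obstacle in this plan is precisely the reflection step, where one must carefully transfer a covering-pullback diagram from $\overline{\ct{E}}$ back to \ct{E} using the full cocktail of properties of ${\bf y}$ at once.
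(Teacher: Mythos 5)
Your proof is correct. The paper actually omits the proof of this proposition, remarking only that ``the only insight they require is that a small map covering a representation is again a representation'' --- which is exactly the observation driving your preservation step and your treatment of the equivalence --- so your write-up (including the careful descent of the covering-pullback diagram from $\overline{\ct{E}}$ to \ct{E} in the reflection step, using fullness, faithfulness and reflection of covers) is a correct and more detailed version of the intended routine argument.
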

We omit the proofs, as by now these should be routine. The only insight they require is that a small map covering a representation is again a representation.

\subsection{Separation}

The following two propositions are even easier to prove:
\begin{prop}{presMcov}
Let $(\ct{E}, \smallmap{S})$ a category with display maps. When $\smallmap{S}$ satisfies {\bf (M)}, then so does $\smallmap{S}^{\rm cov}$.
\end{prop}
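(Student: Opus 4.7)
The proposition is essentially immediate from the inclusion $\smallmap{S} \subseteq \smallmap{S}^{\rm cov}$, which follows from the fact that every map is trivially covered by itself (take the identity covering square with $f=g$). So my plan is simply to observe: if $m$ is an arbitrary monomorphism in \ct{E}, then by hypothesis {\bf (M)} for \smallmap{S} we have $m \in \smallmap{S}$, and therefore $m \in \smallmap{S}^{\rm cov}$ as well. Hence $\smallmap{S}^{\rm cov}$ satisfies {\bf (M)}.

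There is essentially no obstacle here; the content of the proposition is just the observation that the axiom {\bf (M)} is of a purely ``contain-every-mono'' form, and since $\smallmap{S}^{\rm cov}$ enlarges \smallmap{S}, it inherits any such inclusion condition automatically. I would spend at most one line on the proof, in contrast to the subsequent results (stability under exact completion, or preservation of axioms like {\bf (PE)}, {\bf ($\Pi$S)}, {\bf (F)}, {\bf (WE)}, {\bf (WS)}) where genuine work is required.
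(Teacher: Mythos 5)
Your proof is correct and is exactly the argument the paper intends: the paper gives no proof at all, remarking only that these propositions are ``even easier to prove'' than the preceding ones, and the inclusion $\smallmap{S} \subseteq \smallmap{S}^{\rm cov}$ immediately transfers the axiom {\bf (M)}.
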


\begin{prop}{presM}
Let \func{{\bf y}}{(\ct{E}, \smallmap{S})}{(\overline{\ct{E}}, \overline{\smallmap{S}})} be the exact completion of a category with display maps. When \smallmap{S} satisfies {\bf (M)}, then so does $\overline{\smallmap{S}}$.
\end{prop}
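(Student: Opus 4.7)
The plan is to show that any mono $m\colon A\rTo B$ in $\overline{\ct{E}}$ is covered by a map of the form ${\bf y}i$ with $i\in\smallmap{S}$, so that $m\in\overline{\smallmap{S}}$ by characterization (4) in \reftheo{existexcompl} (or rather its extension in \refcoro{existexcompldispl}, where covering maps may be taken from $\smallmap{S}$ because $\smallmap{S}\subseteq\smallmap{S}^{\rm cov}$). The key observation is that $\bf y$ is full, covering and bijective on subobjects, so the cover-mono factorization of a morphism in $\overline{\ct{E}}$ with ${\bf y}$-representable source and target comes from $\ct{E}$.

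Concretely, first I would use that $\bf y$ is covering to pick a cover $e\colon{\bf y}Y\twoheadrightarrow B$, and pull $m$ back along it:
\[
\xymatrix{
P \ar[r] \ar@{ >->}[d]_{m'} & A \ar@{ >->}[d]^m \\
{\bf y}Y \ar@{->>}[r]_e & B.
}
\]
This is a (covering) pullback square in which $m'$ is a mono and the top map is a cover. Next, cover $P$ by some ${\bf y}X\twoheadrightarrow P$, call the composite ${\bf y}X\rTo P\rTo{\bf y}Y$; by fullness of $\bf y$ it equals ${\bf y}g$ for a unique $g\colon X\rTo Y$ in $\ct{E}$.

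Factor $g=i\circ p$ in $\ct{E}$ as a cover $p$ followed by a mono $i\colon X'\rTo Y$. Since $\bf y$ preserves the regular structure, ${\bf y}g={\bf y}i\circ{\bf y}p$ is a cover-mono factorization of ${\bf y}g$, and so is the original ${\bf y}X\twoheadrightarrow P\rightarrowtail{\bf y}Y$. By uniqueness of such factorizations there is an iso $P\cong{\bf y}X'$ identifying $m'$ with ${\bf y}i$ as subobjects of ${\bf y}Y$. Because $\smallmap{S}$ satisfies {\bf (M)}, $i\in\smallmap{S}$, and therefore ${\bf y}i\in\overline{\smallmap{S}}$. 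Pasting the resulting covering square on the left of the pullback above yields
\[
\xymatrix{
{\bf y}X' \ar[d]_{{\bf y}i} \ar@{->>}[r] & A \ar@{ >->}[d]^m \\
{\bf y}Y \ar@{->>}[r]_e & B,
}
\]
exhibiting $m$ as covered by ${\bf y}i$ with $i\in\smallmap{S}$. Hence $m\in\overline{\smallmap{S}}$.

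There is no real obstacle here; the only point requiring care is invoking uniqueness of image factorization to replace the generic cover-mono pair $({\bf y}X\twoheadrightarrow P,\,m')$ by one of the form $({\bf y}p,{\bf y}i)$ coming from $\ct{E}$. Once this identification is made, {\bf (M)} for $\smallmap{S}$ does the rest. The argument did not use any of the intermediate results about $\smallmap{S}^{\rm cov}$, but is consistent with \refprop{presMcov}: essentially the same manipulation proves both, since covering a mono forces it (up to image) to be a mono in $\ct{E}$.
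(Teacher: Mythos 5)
Your argument is correct and is exactly the routine verification the paper has in mind when it declares this proposition ``even easier to prove'' and omits the details: cover the codomain by some ${\bf y}Y$, pull the mono back, identify the resulting subobject of ${\bf y}Y$ with one of the form ${\bf y}i$, and invoke {\bf (M)} for $\smallmap{S}$ together with the characterisation of $\overline{\smallmap{S}}$ as the maps covered by images of (covered) display maps. The only simplification available is that the middle step --- identifying $m'$ with some ${\bf y}i$ as subobjects of ${\bf y}Y$ --- is precisely part (1) of \reflemm{exreglemma} (bijectivity of ${\bf y}$ on subobjects), so you could cite that instead of re-running the image-factorisation argument.
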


\subsection{Power types}

In this subsection, we give proofs for the stability of {\bf (PE)} and {\bf (PS)} under exact completion and covered maps. They all rely on the following lemma:

\begin{lemm}{redlemmforpowobj}
Let \func{{\bf y}}{(\ct{E}, \smallmap{S})}{(\overline{\ct{E}}, \overline{\smallmap{S}})} be the exact completion of a category with display maps. When $\spower X$ is the power object for $X$ in \ct{E}, then ${\bf y}\spower X$ is the power object for ${\bf y}X$ in $\overline{\ct{E}}$.
\end{lemm}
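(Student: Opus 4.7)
The plan is to verify that ${\bf y}\spower X$, together with the subobject ${\bf y}(\in_X) \subseteq {\bf y}X \times {\bf y}\spower X$, has the universal property required of a power object in $\overline{\ct{E}}$. First I would check that ${\bf y}(\in_X) \to {\bf y}\spower X$ lies in $\overline{\smallmap{S}}$: it is the image under ${\bf y}$ of the composite $\in_X \to X \times \spower X \to \spower X$, which is in $\smallmap{S}$ because $\in_X$ is $\smallmap{S}$-displayed, so by definition of $\overline{\smallmap{S}}$ it lies in $\overline{\smallmap{S}}$. Now let $Y \in \overline{\ct{E}}$ and let $R \subseteq {\bf y}X \times Y$ be an $\overline{\smallmap{S}}$-displayed family. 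We need to produce a unique classifying map $\rho \colon Y \to {\bf y}\spower X$.

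Pick a cover $q \colon {\bf y}Y' \to Y$ (possible because ${\bf y}$ is covering). Let $R' := (\id \times q)^*R \subseteq {\bf y}X \times {\bf y}Y' = {\bf y}(X \times Y')$. Since ${\bf y}$ is bijective on subobjects (item (3) of \reftheo{existexcompl}), there is a unique $R_0 \subseteq X \times Y'$ with ${\bf y}R_0 = R'$. The composite $R' \to {\bf y}Y'$ is the pullback of $R \to Y$ along $q$, hence lies in $\overline{\smallmap{S}}$ by {\bf (A1)}; and since ${\bf y}$ reflects small maps (the note following \reftheo{existexcompl}), $R_0 \to Y'$ lies in $\smallmap{S}$. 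Thus $R_0$ is an $\smallmap{S}$-displayed family, and the power object property of $\spower X$ in $\ct{E}$ yields a unique $\rho_0 \colon Y' \to \spower X$ with $R_0 = \rho_0^*(\in_X)$. Applying ${\bf y}$ we get ${\bf y}\rho_0 \colon {\bf y}Y' \to {\bf y}\spower X$ with $R' = ({\bf y}\rho_0)^*({\bf y}\in_X)$.

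The key step, and the place where the argument is most delicate, is descending ${\bf y}\rho_0$ along the cover $q$. Let $(a,b) \colon K \rightrightarrows {\bf y}Y'$ be the kernel pair of $q$, and choose a cover $p \colon {\bf y}K_0 \to K$. By fullness of ${\bf y}$, $ap = {\bf y}a_0$ and $bp = {\bf y}b_0$ for some $a_0, b_0 \colon K_0 \to Y'$. The two subobjects $a_0^*R_0$ and $b_0^*R_0$ of $X \times K_0$ become equal after applying ${\bf y}$, since both ${\bf y}(a_0^*R_0)$ and ${\bf y}(b_0^*R_0)$ are obtained by pulling $R$ back along the common map $q \circ a \circ p = q \circ b \circ p$; as ${\bf y}$ is bijective on subobjects, $a_0^*R_0 = b_0^*R_0$. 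Uniqueness of classifying maps in $\ct{E}$ then gives $\rho_0 a_0 = \rho_0 b_0$, so ${\bf y}\rho_0 \circ ap = {\bf y}\rho_0 \circ bp$, and since $p$ is epic, ${\bf y}\rho_0 \circ a = {\bf y}\rho_0 \circ b$. Because $\overline{\ct{E}}$ is regular (in fact exact), the cover $q$ is the coequaliser of its kernel pair, so we obtain a unique $\rho \colon Y \to {\bf y}\spower X$ with $\rho q = {\bf y}\rho_0$.

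It remains to check that $\rho$ classifies $R$ and is unique. For the first, both $\rho^*(\in_{{\bf y}X})$ and $R$ pull back along the cover $\id \times q$ to the same subobject $R'$ of ${\bf y}X \times {\bf y}Y'$, so by faithful descent of subobjects along covers in the regular category $\overline{\ct{E}}$ they agree. For uniqueness, if $\rho'$ classifies $R$ as well, then $\rho' q$ classifies $R'$ on ${\bf y}Y'$; fullness of ${\bf y}$ lifts $\rho' q$ to a map ${\bf y}\sigma$, and ${\bf y}$ being bijective on subobjects together with uniqueness of $\rho_0$ in $\ct{E}$ forces $\sigma = \rho_0$. Hence $\rho' q = {\bf y}\rho_0 = \rho q$, and since $q$ is epic, $\rho' = \rho$.
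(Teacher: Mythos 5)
Your overall strategy coincides with the paper's: pull the displayed family back along a cover of the form ${\bf y}Y' \rTo Y$, classify the resulting family in \ct{E} using the universal property of $\spower X$ there, and descend the classifying map along the cover using that covers in $\overline{\ct{E}}$ coequalise their kernel pairs. The descent step (covering the kernel pair by an object ${\bf y}K_0$ and using fullness), the verification that $\rho$ classifies $R$ via descent of subobjects along covers, and the uniqueness argument are all sound.

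There is, however, a genuine gap at the step where you conclude that $R_0 \rTo Y'$ lies in \smallmap{S}. The lemma concerns the exact completion of a category with \emph{display} maps, so by definition $\overline{\smallmap{S}} = \overline{\smallmap{S}^{\rm cov}}$, and what ${\bf y}$ reflects is membership in $\smallmap{S}^{\rm cov}$, not in \smallmap{S}: from ${\bf y}(R_0 \rTo Y') \in \overline{\smallmap{S}}$ one only obtains that $R_0 \rTo Y'$ is \emph{covered} by a map in \smallmap{S}, and a class of display maps need not be closed under covered maps (it satisfies neither {\bf (A2)} nor {\bf (A6)}, only the weaker {\bf (A10)}). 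Hence $R_0$ is only known to be an $\smallmap{S}^{\rm cov}$-displayed family, whereas the universal property of $\spower X$ in \ct{E} applies to $\smallmap{S}$-displayed families. The repair is exactly the extra step the paper takes: choose a covering square exhibiting $R_0 \rTo Y'$ as covered by some $V \rTo J \in \smallmap{S}$, note that $V \rTo X \times J$ then belongs to \smallmap{S} by \reflemm{composlemma}, and replace $V$ by its image in $X \times J$ using {\bf (A10)}, so as to obtain a genuine $\smallmap{S}$-displayed mono over $J$; one then classifies over $J$ and descends along the cover $J \rTo Y' \rTo Y$ exactly as you do. If the lemma were stated only for a class of small maps (where $\smallmap{S}^{\rm cov} = \smallmap{S}$ and ${\bf y}$ does reflect \smallmap{S}), your argument would go through as written.
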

\begin{proof}
From now on, we will drop occurences of {\bf y} in the proofs. 

For the purpose of showing that $\spower X$ in \ct{E} has the universal property of the power class object of $X$ in $\overline{\ct{E}}$, let $U \subseteq X \times I \rTo I$ be an $\overline{\smallmap{S}}$-displayed $I$-indexed family of subobjects of $X$. We need to show that there is a unique map \func{\rho}{I}{\spower X} such that $(\id \times \rho)^* \in_X = U$.

Since $U \rTo I \in \overline{\smallmap{S}}$, there is a map $V \rTo J \in \smallmap{S}$ such that the outer rectangle in
\diag{V \ar[d]_f \ar@{->>}[r] & U \ar@{ >->}[d] \\
X \times J \ar[r] \ar[d] & X \times I \ar[d] \\
J \ar@{->>}[r]_p & I,} 
is a covering square. Now also $\func{f}{V}{X \times J} \in \smallmap{S}$ by \reflemm{composlemma}. By replacing $f$ by its image if necessary and using the axiom {\bf (A10)}, we may assume that the top square (and hence the entire diagram) is a pullback and $f$ is monic.

So there is a classifying map \func{\sigma}{J}{\spower X} in \ct{E}, by the universal property of $\spower X$ in \ct{E}. This map $\sigma$ coequalises the kernel pair of $p$, again by the universal property of $\spower X$ and $\in_X$.  Therefore there is a unique map \func{\rho}{I}{\spower X} such that $\rho p = \sigma$:
\diag{V \ar@{ >->}[d]_f \ar@{->>}[r] & U \ar@{ >->}[d] \ar[r] & \in_X \ar@{ >->}[d] \\
X \times J \ar@{->>}[r] \ar[d] & X \times I \ar[d] \ar[r] & X \times \spower X \ar[d] \\
J \ar@{->>}[r]^p \ar@/_/@<-1ex>[rr]_{\sigma} & I \ar[r]^{\rho} & \spower X. } 
The desired equality $(\id \times \rho)^* \in_X = U$ follows from \reflemm{pspastinregcat}, and the uniqueness of $\rho$ follows from the fact that $p$ is epic.
\end{proof}
\begin{prop}{presPEcov}
Let $(\ct{E}, \smallmap{S})$ be a category with display maps \smallmap{S}. When \smallmap{S} satisfies {\bf (PE)}, then so does $\smallmap{S}^{\rm cov}$. Indeed, the power class objects for both classes of maps coincide.
\end{prop}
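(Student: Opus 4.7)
The plan is to adapt the argument of \reflemm{redlemmforpowobj}, but carried out entirely inside $\ct{E}$ rather than in the exact completion, so that the \emph{same} object $\spower X$ provided by $(\ct{E},\smallmap{S})$ serves as the classifier for $\smallmap{S}^{\rm cov}$-displayed families. Accordingly, the strategy is: given an $\smallmap{S}^{\rm cov}$-displayed family $U \subseteq X \times I$, cover it by an $\smallmap{S}$-displayed family, classify that by $\spower X$ using {\bf (PE)} for $\smallmap{S}$, and then descend the classifying map along the cover.

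First, by the definition of $\smallmap{S}^{\rm cov}$, the composite $U \rTo I$ is covered by some $V \rTo J \in \smallmap{S}$, giving a covering square with a cover $p \colon J \rTo I$. By \reflemm{composlemma}, the composite $V \rTo X \times J$ lies in $\smallmap{S}$; replacing $V$ by its image in $X \times J$ and invoking the Images axiom {\bf (A10)}, I may assume that $V \hookrightarrow X \times J$ is a mono and that the square
\diag{ V \ar@{ >->}[d] \ar@{->>}[r] & U \ar@{ >->}[d] \\ X \times J \ar[r]_{\id \times p} & X \times I }
is a pullback. In particular $V$ is an $\smallmap{S}$-displayed subobject of $X$ indexed by $J$, so by {\bf (PE)} for $\smallmap{S}$ there is a unique $\sigma \colon J \rTo \spower X$ with $(\id \times \sigma)^* \in_X = V$.

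Next, I would show that $\sigma$ coequalises the kernel pair $p_1, p_2 \colon K \rightrightarrows J$ of $p$. Since $p p_1 = p p_2$, pulling back the pullback square above along $p_1$ and $p_2$ gives
\[ (\id \times p_1)^* V \; = \; (\id \times p p_1)^* U \; = \; (\id \times p p_2)^* U \; = \; (\id \times p_2)^* V, \]
so by the uniqueness clause of {\bf (PE)} applied to these equal subobjects, $\sigma p_1 = \sigma p_2$. As $p$ is a cover in the regular category $\ct{E}$, hence the coequaliser of its own kernel pair, there is a unique $\rho \colon I \rTo \spower X$ with $\rho p = \sigma$.

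Finally, to verify that $(\id \times \rho)^* \in_X = U$, observe that both sides pull back along the cover $\id \times p$ to $V$: for the left side this is because $(\id \times \rho p)^* \in_X = (\id \times \sigma)^* \in_X = V$; for the right, by the pullback above. Pullback along a cover is injective on subobjects in a regular category, so the two agree. Uniqueness of $\rho$ follows from $p$ being epic. The argument is essentially a routine transcription of \reflemm{redlemmforpowobj} into $\ct{E}$; the only mildly delicate point is the coequalisation step, and even that is forced by the uniqueness part of the classifying property of $\spower X$.
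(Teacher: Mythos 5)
Your proof is correct and follows exactly the route the paper takes: the paper's own proof of this proposition consists of the single instruction to repeat the argument of \reflemm{redlemmforpowobj} verbatim inside $\ct{E}$, replacing $\overline{\ct{E}}$ by $\ct{E}$ and $\overline{\smallmap{S}}$ by $\smallmap{S}^{\rm cov}$, which is precisely what you have done. If anything you supply slightly more detail than the paper does -- at the coequalisation step, where you derive $\sigma p_1 = \sigma p_2$ from the uniqueness clause of {\bf (PE)}, and at the final verification, where you use injectivity of $(\id \times p)^*$ on subobjects in place of the paper's appeal to \reflemm{pspastinregcat}; the two arguments are equivalent.
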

\begin{proof}
The proof of the lemma above can be copied verbatim, making the obvious minor changes: in particular, replacing $\overline{\ct{E}}$ by $\ct{E}$ and $\overline{\smallmap{S}}$ by $\smallmap{S}^{\rm cov}$.
\end{proof}
\begin{prop}{presPE}
Let \func{{\bf y}}{(\ct{E}, \smallmap{S})}{(\overline{\ct{E}}, \overline{\smallmap{S}})} be the exact completion of a category with display maps \smallmap{S}. When \smallmap{S} satisfies {\bf (PE)}, then so does $\overline{\smallmap{S}}$. Moreover, {\bf y} preserves power class objects.
\end{prop}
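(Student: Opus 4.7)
The plan is to observe that the ``moreover'' clause --- that ${\bf y}$ preserves power class objects --- is already contained in \reflemm{redlemmforpowobj}: for $X \in \ct{E}$, the object ${\bf y}\spower X$ serves as the power class object of ${\bf y}X$ in $\overline{\ct{E}}$. The nontrivial task is therefore to produce a power class object for an \emph{arbitrary} $Z \in \overline{\ct{E}}$. Since ${\bf y}$ is covering, I choose a cover $\func{q}{{\bf y}X}{Z}$ and form its kernel pair $R \subseteq {\bf y}X \times {\bf y}X$; because ${\bf y}$ is a bijection on subobjects (\reftheo{existexcompl}(3)), we have $R = {\bf y}R_0$ for a unique equivalence relation $R_0 \subseteq X \times X$ in $\ct{E}$.

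I will build $\spower Z$ as the ``subobject of ${\bf y}\spower X$ of $R$-saturated elements''. Working internally in the positive Heyting category $\ct{E}$, define a subobject
\[ A_0 \;=\; \{\, \alpha \in \spower X \,:\, \forall (x,x') \in R_0 \; ( x \in \alpha \to x' \in \alpha) \,\} \subseteq \spower X, \]
which exists by the Heyting structure of $\ct{E}$ (no bounded separation is needed, since we only need $A_0$ as a subobject, not as a bounded one). Put $A := {\bf y}A_0$, and let $E \subseteq {\bf y}X \times A$ be the pullback of $\in_{{\bf y}X}$ along ${\bf y}X \times A \hookrightarrow {\bf y}X \times {\bf y}\spower X$. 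By the defining property of $A$, the subobject $E$ is invariant under $R$ in its first coordinate; since $R$ is the kernel pair of the cover $q \times \id_A$, the subobject $E$ descends along this cover to a unique subobject $\in_Z \subseteq Z \times A$.

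The claim is that $(A, \in_Z)$ is the power class object of $Z$. First, one checks that $\in_Z \to A$ belongs to $\overline{\smallmap{S}}$: the map $E \to A$ lies in $\overline{\smallmap{S}}$ because it is obtained by pullback from the displayed map $\in_{{\bf y}X} \to {\bf y}\spower X$ (provided by \reflemm{redlemmforpowobj}), and the canonical $E \to \in_Z$ is a cover over $A$, so the Quotients axiom \textbf{(A6)} for $\overline{\smallmap{S}}$ applies. For the universal property, given a displayed family $U \subseteq Z \times Y$, I pull back along the cover $q \times \id_Y$ to obtain an $R$-invariant displayed family $\tilde U \subseteq {\bf y}X \times Y$. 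By \reflemm{redlemmforpowobj} this $\tilde U$ is classified by a unique map $\func{\tilde\rho}{Y}{{\bf y}\spower X}$, and its $R$-invariance forces $\tilde\rho$ to factor through $A$, yielding the desired $\func{\rho}{Y}{A}$.

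Finally, that $\rho$ actually classifies $U$ reduces to a descent argument: both $(\id_Z \times \rho)^* \in_Z$ and $U$ pull back along the cover $q \times \id_Y$ to $\tilde U$, using the identity $E = (q \times \id_A)^* \in_Z$, which holds because $E$ is $R$-invariant and $\in_Z = \exists_{q \times \id_A} E$ with $R$ the kernel pair of $q$. Uniqueness of $\rho$ is inherited from that of $\tilde\rho$ together with $A \hookrightarrow {\bf y}\spower X$ being monic. I expect the main technical point to be exactly this last descent step, namely verifying that an $R$-invariant subobject of ${\bf y}X \times A$ is uniquely recovered as the pullback of its image in $Z \times A$; this is where exactness of $\overline{\ct{E}}$ enters in an essential way.
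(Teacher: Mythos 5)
There is a genuine gap, and it sits at the point you flag as routine: the passage between displayed subobjects of $Z$ and $R$-saturated displayed subobjects of ${\bf y}X$. Pulling back along the cover \func{q}{{\bf y}X}{Z} does \emph{not} preserve the property of being displayed, because $q$ is merely a cover and need not be small. Concretely: if $U \subseteq Z \times Y$ is a displayed family, then in the pullback $\tilde{U} = (q \times \id_Y)^*U$ the map $\tilde{U} \rTo U$ is a pullback of the cover $q$, so the composite $\tilde{U} \rTo U \rTo Y$ is a non-small cover followed by a small map and need not lie in $\overline{\smallmap{S}}$. Hence $\tilde{U}$ is not a displayed family, \reflemm{redlemmforpowobj} does not apply to it, and the classifying map $\tilde{\rho}$ you invoke need not exist. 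The same phenomenon undermines the construction of the object itself: a displayed $T \subseteq Z$ has $R$-saturated preimage $q^{-1}(T) \subseteq {\bf y}X$, but $q^{-1}(T)$ need not be displayed (take $Z = 1$, $T = 1$, and $X$ non-small), so it is not named by any element of ${\bf y}\spower X$, and your subobject $A$ of saturated elements can fail to enumerate all displayed subobjects of $Z$. In the impredicative setting, where {\bf (M)} holds and every mono is small, both problems disappear and your descent construction is the standard one; here it breaks.

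The paper's proof avoids this by orienting everything along $q$ rather than against it: $\spower Z$ is constructed as a \emph{quotient} of ${\bf y}\spower X$ by the bounded equivalence relation $\alpha \sim \beta \Leftrightarrow q\alpha = q\beta$ (``same image under $q$''), not as a subobject of saturated elements, and $\in_Z$ is the \emph{image} of $\in_{{\bf y}X}$ along ${\bf y}X \times {\bf y}\spower X \rTo Z \times \spower Z$, not a descent of its restriction. Taking images along covers preserves smallness by {\bf (A6)}, which is why this direction works. For the universal property the missing ingredient in your argument is the Collection axiom {\bf (A7)}: given a displayed $U \subseteq Z \times Y$, one first covers $q^*U \rTo U$ by a covering square with a genuinely displayed $V \rTo J$ over a cover \func{p}{J}{Y}, classifies the image of $V$ in ${\bf y}X \times J$ by a map \func{\sigma}{J}{{\bf y}\spower X}, and only then descends the composite $J \rTo {\bf y}\spower X \rTo \spower Z$ along $p$. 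To salvage your outline you would have to insert this Collection step and replace the saturated-elements subobject by the quotient.
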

\begin{proof}
Let $Y$ be an arbitrary object in $\overline{\ct{E}}$. Since {\bf y} is covering, there is an $X \in \ct{E}$ together with a cover
\[ \func{q}{X}{Y}. \]
From \reflemm{redlemmforpowobj} we learn that $X$ has a powerobject $\spower X$ in $\overline{\ct{E}}$. On this object we can define the following equivalence relation:
\begin{eqnarray*}
\alpha \sim \beta & \Leftrightarrow & q\alpha = q\beta \\
& \Leftrightarrow & \forall a \in \alpha \, \exists b \in \beta : qa = qb \land \forall b \in \beta \, \exists a \in \alpha : qa = qb.
\end{eqnarray*}
We claim that the quotient of $\spower X$ with respect to this $\overline{\smallmap{S}}$-bounded equivalence relation, which we will write $\spower Y$, is indeed the power object of $Y$. 

We first need to construct an $\overline{\smallmap{S}}$-displayed $\spower Y$-indexed family of subobjects of $Y$: it is defined as the image of $\in_X$ along $X \times \spower X \rTo Y \times \spower Y$. Then, since the entire diagram in
\diag{ \in_X \ar@{ >->}[d] \ar@{->>}[r] & \in_Y \ar@{ >->}[d] \\
X \times \spower X \ar[d] \ar[r] & Y \times \spower Y \ar[d] \\
\spower X \ar@{->>}[r] & \spower Y}
is a covering square, $\in_Y \rTo \spower Y \in \overline{\smallmap{S}}$.

It remains to verify the universal property of $\in_Y$. So let $U \subseteq Y \times I$ be an $\overline{\smallmap{S}}$-displayed $I$-indexed family of subobjects of $Y$. We need to find a map \func{\rho}{I}{\spower Y} such that $(\id \times \rho)^* \in_Y = U$. Pulling back $U \subseteq Y \times I$ along $X \times I \rTo Y \times I$, we obtain a subobject $q^* U \subseteq X \times I$. Then we use the Collection axiom for $\overline{\smallmap{S}}$ to obtain a covering square of the form
\diag{V \ar[r] \ar[dd] & q^*U \ar@{ >->}[d] \ar@{->>}[r] & U \ar@{ >->}[d] \\
& X \times I \ar@{->>}[r] & Y \times I \ar[d] \\
J \ar@{->>}[rr]_p & & I,}
with $V \rTo J \in \overline{\smallmap{S}}$. By considering the diagram
\diag{V \ar@/_/[ddr] \ar@/^/[drr] \ar[dr] \\
& X \times J \ar[r] \ar[d] & X \times I \ar[d] \\
& J \ar[r] & I,}
we see that the image $V'$ of $V$ in $X \times J$ defines an $\overline{\smallmap{S}}$-displayed $J$-indexed family of subobjects of $X$, and therefore a morphism \func{\sigma}{J}{\spower X}. We now claim that the composite
\diag{ m: J \ar[r]^{\sigma} & \spower X \ar[r] & \spower Y }
coequalises the kernel pair of the cover \func{p}{J}{I}. This follows from the fact that $m(j)$ equals $m(j')$ in case the images of $V_j \rTo X \rTo Y$ and $V_{j'} \rTo X \rTo Y$ are the same. But as these images are precisely $U_{p(j)}$ and $U_{p(j')}$, this happens in particular whenever $p(j) = p(j')$.  Therefore we obtain a morphism \func{\rho}{I}{\spower Y} such that $\rho p = \sigma$. The proof that it has the desired property, and is the unique such, is left to the reader.
\end{proof}

\begin{prop}{PSstable}
Let \func{{\bf y}}{(\ct{E}, \smallmap{S})}{(\overline{\ct{E}}, \overline{\smallmap{S}})} be the exact completion of a category with display maps. When \smallmap{S} satisfies {\bf (PS)}, then so does $\overline{\smallmap{S}}$.
\end{prop}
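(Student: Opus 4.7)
The plan is to reduce to axiom \textbf{(PS)} for $\smallmap{S}$ in $\ct{E}$ by exploiting three earlier results: the preservation of power class objects by $\bf y$ (\refprop{presPE}), the stability of power class objects under pullback (\reflemm{PEstableunderslicing}), and the fact that $\spower$ sends covers to covers (\refprop{collandPE}). Given $f \in \overline{\smallmap{S}}$, the idea is to find a covering square over $f$ whose upper leg is of the form ${\bf y}f'$ with $f'$ small in $\ct{E}$, transfer the smallness of $\spower_{X'}(f')$ across $\bf y$ into the exact completion, assemble a covering square between the relative power objects, and then invoke the Covered Maps axiom for $\overline{\smallmap{S}}$.

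Concretely, for $f \colon Y \to X$ in $\overline{\smallmap{S}}$, I would first use \refcoro{existexcompldispl}(4) to exhibit a covering square
\diag{{\bf y}Y' \ar@{->>}[r] \ar[d]_{{\bf y}f'} & Y \ar[d]^f \\ {\bf y}X' \ar@{->>}[r]_p & X}
with $f' \in \smallmap{S}$. Applying \textbf{(PS)} in $\ct{E}$ gives $\spower_{X'}(f') \to X' \in \smallmap{S}$, so ${\bf y}\spower_{X'}(f') \to {\bf y}X'$ lies in $\overline{\smallmap{S}}$, and by \refprop{presPE} (read in the slice over ${\bf y}X'$, as justified by \refrema{excomplslicing} together with \reflemm{PEstableunderslicing}) this object coincides with the relative power object $\overline{\spower}_{{\bf y}X'}({\bf y}f')$. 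Writing $\tilde f \colon \tilde Y \to {\bf y}X'$ for $p^*f$, the quasi-pullback condition tells us that ${\bf y}f'$ covers $\tilde f$ in $\overline{\ct{E}}/{\bf y}X'$, and \refprop{collandPE} (applied in that slice, which inherits \textbf{(PE)}) supplies a cover $\overline{\spower}_{{\bf y}X'}({\bf y}f') \twoheadrightarrow \overline{\spower}_{{\bf y}X'}(\tilde f)$ over ${\bf y}X'$. Since $\overline{\spower}_{{\bf y}X'}(\tilde f)$ is the pullback of $\overline{\spower}_X(f)$ along $p$ (by \reflemm{PEstableunderslicing} applied in the exact completion), these pieces fit together into a covering square
\diag{{\bf y}\spower_{X'}(f') \ar@{->>}[r] \ar[d] & \overline{\spower}_X(f) \ar[d] \\ {\bf y}X' \ar@{->>}[r]_p & X}
whose left vertical leg lies in $\overline{\smallmap{S}}$. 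The Covered Maps axiom for $\overline{\smallmap{S}}$ then yields $\overline{\spower}_X(f) \to X \in \overline{\smallmap{S}}$, as desired.

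The main obstacle I anticipate is the careful bookkeeping of the several relative power objects: identifying ${\bf y}\spower_{X'}(f')$ with $\overline{\spower}_{{\bf y}X'}({\bf y}f')$, identifying $p^*\overline{\spower}_X(f)$ with $\overline{\spower}_{{\bf y}X'}(\tilde f)$, and verifying that the cover between these power objects combines with $p$ to form a genuine covering square over $\overline{\spower}_X(f) \to X$. Each of these identifications is underwritten by one of the earlier results, but they must be assembled consistently before the Covered Maps axiom delivers the final conclusion.
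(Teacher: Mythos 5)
Your proposal is correct and follows essentially the same route as the paper's proof: cover $f$ by a map ${\bf y}f'$ with $f'\in\smallmap{S}$, apply {\bf (PS)} in \ct{E}, transfer the resulting small power object along {\bf y} (\reflemm{redlemmforpowobj}), use \refprop{collandPE} to cover the relative power object of the pullback $p^*f$, and conclude by Descent/Covered maps via the identification $p^*\slspower{X}(f)\cong\slspower{{\bf y}X'}(p^*f)$. The only cosmetic difference is that the paper phrases the last step as two applications (Quotients over ${\bf y}X'$, then Descent along $p$) rather than one covering square, but the content is identical.
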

\begin{proof}
Consider a map $\func{f}{B}{A}$ in $\overline{\smallmap{S}}$. There is a $\func{g}{Y}{X}$ in $\smallmap{S}$ such that
\diag{ Y \ar[d]_g \ar@{->>}[r] & B \ar[d]^f \\
X \ar@{->>}[r]_p & A }
is a covering square. This we can decompose as follows:
\diag{ Y \ar[dr]_g \ar@{->>}[r]^e & p^* B \ar@{->>}[r] \ar[d]^{p^*f} & B \ar[d]^f \\
& X \ar@{->>}[r]_p & A. }
From the validity of {\bf (PS)} for \smallmap{S} it follows that $\slspower{X}(g) \rTo X$ is $\smallmap{S}$-small in \ct{E}, and also $\overline{\smallmap{S}}$-small in $\overline{\ct{E}}$ by \reflemm{redlemmforpowobj} (and \refrema{excomplslicing}). As \refprop{collandPE} implies that \func{\slspower{X}(e)}{\slspower{X} (g)}{\slspower{X}(p^*f)} is a cover, we see that $\slspower{X}(p^*f) \rTo X$ belongs to $\overline{\smallmap{S}}$. Hence the same holds for $\slspower{A}(f) \rTo A$ by Descent.
\end{proof}

The same argument shows:
\begin{prop}{presPScov}
Let $(\ct{E}, \smallmap{S})$ be a category with display maps \smallmap{S}. When \smallmap{S} satisfies {\bf (PS)}, then so does $\smallmap{S}^{\rm cov}$. 
\end{prop}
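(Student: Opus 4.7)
The plan is to imitate the proof of \refprop{PSstable} line by line, carrying out the entire argument inside $\ct{E}$ with $\smallmap{S}^{\rm cov}$ in place of $\overline{\smallmap{S}}$. Since {\bf (PS)} presupposes {\bf (PE)}, we already know from \refprop{presPEcov} that $\smallmap{S}^{\rm cov}$ satisfies {\bf (PE)} and that the power class objects $\spower X$ computed relative to \smallmap{S} and to $\smallmap{S}^{\rm cov}$ coincide. This means the relevant constructions are available without ambiguity.

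Given $\func{f}{B}{A}$ in $\smallmap{S}^{\rm cov}$, first pick a covering square with $\func{g}{Y}{X}$ in \smallmap{S}, say
\diag{ Y \ar[d]_g \ar@{->>}[r] & B \ar[d]^f \\ X \ar@{->>}[r]_p & A. }
As in the proof of \refprop{PSstable}, factor this as
\diag{ Y \ar[dr]_g \ar@{->>}[r]^e & p^*B \ar@{->>}[r] \ar[d]^{p^*f} & B \ar[d]^f \\ & X \ar@{->>}[r]_p & A, }
so that $e$ is a cover and the right-hand square is a pullback. By {\bf (PS)} applied to \smallmap{S}, the map $\slspower{X}(g) \rTo X$ belongs to \smallmap{S}.

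Next, since $\smallmap{S}^{\rm cov}$ is a class of small maps satisfying {\bf (PE)}, \refprop{collandPE} applies and tells us that the induced map
\[ \func{\slspower{X}(e)}{\slspower{X}(g)}{\slspower{X}(p^*f)} \]
is a cover in $\ct{E}/X$. Composition with the cover $\slspower{X}(g) \rTo X$ that lies in \smallmap{S} then exhibits $\slspower{X}(p^*f) \rTo X$ as being covered by a map in \smallmap{S} via the trivial covering square with identity on $X$, so $\slspower{X}(p^*f) \rTo X$ lies in $\smallmap{S}^{\rm cov}$. Finally, because $\spower$ is an indexed endofunctor (\reflemm{PEstableunderslicing}), $\slspower{X}(p^*f) \rTo X$ is the pullback of $\slspower{A}(f) \rTo A$ along $p$, and Descent {\bf (A2)} for $\smallmap{S}^{\rm cov}$ applied to the cover $p$ then yields $\slspower{A}(f) \rTo A \in \smallmap{S}^{\rm cov}$, as required.

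The only step that is not a direct transliteration of the exact-completion argument is the appeal to \refprop{collandPE}, so the one point worth double-checking is that all its hypotheses (being a class of small maps, {\bf (PE)}) really are in force for $\smallmap{S}^{\rm cov}$; but this is precisely the content of \refprop{displtosmallmaps} together with \refprop{presPEcov}, so there is no genuine obstacle.
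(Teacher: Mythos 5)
Your proof is correct and is essentially the paper's own argument: the paper proves \refprop{PSstable} and then simply remarks that ``the same argument shows'' \refprop{presPScov}, which is exactly the transliteration you carry out (and your explicit check that \refprop{collandPE} applies to $\smallmap{S}^{\rm cov}$ via \refprop{displtosmallmaps} and \refprop{presPEcov} is a welcome filling-in of that remark). The only cosmetic slip is calling $\slspower{X}(g) \rTo X$ a ``cover'' --- it is the small map forming the left edge of the covering square whose top edge is the cover $\slspower{X}(e)$ --- but the intended square, and hence the argument, is clear.
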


\subsection{Function types}

As we already announced in the introduction to this section, we will not be able to show stability of the axiom {\bf ($\Pi$S)}. The difficulty is that the morphism in $\overline{\ct{E}}$ are functional relations in \ct{E}. In fact, for this reason we actually conjecture that the axiom {\bf ($\Pi$S)} is not stable.

On the other hand, by generalising Theorem I.3.1 in \cite{joyalmoerdijk95}, we can show the stability of the axiom {\bf ($\Pi$E)} for representable classes of display maps.
\begin{prop}{presPiE}
Let \func{{\bf y}}{(\ct{E}, \smallmap{S})}{(\overline{\ct{E}}, \overline{\smallmap{S}})} be the exact completion of a category with display maps \smallmap{S}. If \smallmap{S} is representable and satisfies {\bf ($\Pi$E)}, then $\overline{\smallmap{S}}$ satisfies {\bf (PE)} as well as {\bf ($\Pi$E)}.
\end{prop}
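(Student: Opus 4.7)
The plan is to first construct the power class object $\spower ({\bf y}X)$ for $X \in \ct{E}$ directly using representability and exponentiability, then to extend the construction to an arbitrary $Y \in \overline{\ct{E}}$ by the quotient argument of \refprop{presPE}, and finally to derive $(\Pi E)$ for $\overline{\smallmap{S}}$ from $(PE)$ by \reflemm{PEimpliesPiE}.

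For the core construction, fix a representation $\func{\pi}{E}{U}$ for $\smallmap{S}$ and an object $X \in \ct{E}$. Since $\pi \in \smallmap{S}$ is exponentiable by $(\Pi E)$, we may form in $\ct{E}$ the object $V := \Pi_{\pi}(\pi^* X) \rTo U$, which comes with an evaluation map $\func{e}{V \times_U E}{X}$. Let $S \subseteq X \times V$ be the image of $\langle e, p \rangle$, where $p$ is the projection to $V$. Because the projection $V \times_U E \rTo V$ is a pullback of $\pi$, and because it factors as a cover $V \times_U E \twoheadrightarrow S$ followed by $S \rTo V$, the composite $S \rTo V$ belongs to $\smallmap{S}^{\rm cov}$, so $S$ is an $\overline{\smallmap{S}}$-displayed $V$-indexed family of subobjects of $X$. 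Define $R \subseteq V \times V$ by
\[ (v,v') \in R \;\Leftrightarrow\; \forall a \in E_{\pi(v)} \, \exists a' \in E_{\pi(v')} \, (e(v,a) = e(v',a')) \;\mbox{ and symmetrically}. \]
All quantifiers here are along the small map $\pi$, and equality on $X$ is bounded by {\bf (A9)}, so by \refrema{boundedsepforlocHeytcat} $R$ is a bounded subobject of $V \times V$. It is clearly an equivalence relation, so ${\bf y}R$ is an $\overline{\smallmap{S}}$-bounded equivalence relation on ${\bf y}V$ in $\overline{\ct{E}}$. Using exactness of $\overline{\ct{E}}$, set $\spower X := {\bf y}V / {\bf y}R$ and let $\in_X \subseteq X \times \spower X$ be the image of ${\bf y}S$ along $X \times {\bf y}V \twoheadrightarrow X \times \spower X$; by Descent for $\overline{\smallmap{S}}$, the composite $\in_X \rTo \spower X$ belongs to $\overline{\smallmap{S}}$.

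To verify the universal property, let $T \subseteq X \times I$ be an $\overline{\smallmap{S}}$-displayed family with $I \in \overline{\ct{E}}$. Since ${\bf y}$ is covering there is a cover ${\bf y}J \twoheadrightarrow I$, and by the definition of $\overline{\smallmap{S}}$ together with the Collection axiom we may, after replacing $J$ by a further cover, assume that the pullback of $T$ along ${\bf y}J \rTo I$ is the image of an $\smallmap{S}$-displayed family $T' \subseteq X \times J$ coming from $\ct{E}$. Applying representability of $\smallmap{S}$ fibrewise and Collection once more, we may assume that $T'$ is the image of a map $J \times_U E \rTo X$ defined over some $J \rTo U$, which amounts to a classifying map $\sigma \colon J \rTo V$. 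One checks that $\sigma$ respects $R$, and therefore descends through the cover ${\bf y}V \twoheadrightarrow \spower X$ and then along ${\bf y}J \twoheadrightarrow I$ to give the required $\rho \colon I \rTo \spower X$ with $(\id \times \rho)^* \in_X = T$; uniqueness follows because covers are regular epi in $\overline{\ct{E}}$.

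Having constructed $\spower({\bf y}X)$ for all $X \in \ct{E}$, we obtain the power class object of an arbitrary $Y \in \overline{\ct{E}}$ exactly as in the proof of \refprop{presPE}: cover $Y$ by some ${\bf y}X$ and take the quotient of $\spower({\bf y}X)$ by the bounded equivalence relation ``same image in $Y$''. This establishes $(PE)$ for $\overline{\smallmap{S}}$, and $(\Pi E)$ then follows at once by applying \reflemm{PEimpliesPiE} to $\overline{\smallmap{S}}$. I expect the main technical obstacle to be the universal property in the previous paragraph: one must carefully combine the covering property of ${\bf y}$, the Collection axiom, and the representability of $\smallmap{S}$ to classify every $\overline{\smallmap{S}}$-displayed family --- not just those already presented by fibres of $\pi$ --- and then verify uniqueness by a descent argument.
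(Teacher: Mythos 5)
Your proposal is correct and follows essentially the same route as the paper's proof: you form the object $\Sigma_U\Pi_\pi(E^*X)$ (the paper's $P_\pi(X)=\{u\in U,\ t\colon E_u\to X\}$), quotient by the bounded ``same image'' equivalence relation in $\overline{\ct{E}}$, take $\in_X$ to be the image of the tautological displayed family, verify the universal property by covering a given displayed family with an $\smallmap{S}$-family and classifying it via representability and Collection before descending along the covers, and finally reduce the general $Y\in\overline{\ct{E}}$ to the case $Y={\bf y}X$ as in \refprop{presPE} and deduce {\bf ($\Pi$E)} from \reflemm{PEimpliesPiE}. The only differences are cosmetic (e.g.\ the paper gets the classifying map as a relation that becomes functional after passing to the quotient, where you take one further cover to make an actual choice).
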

\begin{proof}
As the validity of {\bf (PE)} implies that of {\bf ($\Pi$E)} by \reflemm{PEimpliesPiE}, we only need to construct power class objects in $\overline{\ct{E}}$. And it suffices to do this for the objects $X \in \ct{E}$, for the general case will then follow as in the proof of \refprop{presPE}.

In \ct{E}, the class \smallmap{S} has a representation \func{\pi}{E}{U}, which is exponentiable in \ct{E}, since we are assuming {\bf ($\Pi$E)} for \smallmap{S}. Therefore we can build in \ct{E} the object
\[ P_{\pi}(X) = \{ u \in U, \func{t}{E_u}{X} \}, \]
together with the equivalence relation
\begin{eqnarray*}
(u, t) \sim (u, t') & \Leftrightarrow & {\rm Im}(t) = {\rm Im}(t') \\
& \Leftrightarrow & \forall e \in E_u \, \exists e' \in E_{u'} \, te = t'e' \land \forall e' \in E_{u'} \, \exists e \in E_{u} \, te = t'e'.
\end{eqnarray*}
This equivalence relation is \smallmap{S}-bounded, so also $\overline{\smallmap{S}}$-bounded in $\overline{\ct{E}}$. Therefore we can take its quotient in $\overline{\ct{E}}$, which we will write as $\spower X$. We claim it is the power class object of $X$ in $\overline{\ct{E}}$.

To show this, we first have to define an $\overline{\smallmap{S}}$-displayed family of subobjects of $X$ in $\overline{\ct{E}}$. Let $L \subseteq X \times P_{\pi}(X)$ be defined by
\[ (x, u, t) \in L \Leftrightarrow \exists e \in E_u \, te = x. \]
Then define $\in_X$ as the image of $L$ along $X \times P_{\pi}(X) \rTo X \times \spower(X)$:
\diag{ L \ar@{ >->}[d] \ar@{->>}[r] & \in_X \ar@{ >->}[d] \\
X \times P_{\pi}(X) \ar[d] \ar[r] & X \times \spower X \ar[d] \\
P_{\pi}(X) \ar[r] & \spower X.}
Since 
\[ (x, u, t) \in L \land (u, t) \sim (u', t') \Rightarrow (x, u', t') \in L, \]
the top square in the above diagram is a pullback, and therefore the entire diagram is a pullback. So the fact that $\in_X \rTo \spower X$ belongs to $\overline{\smallmap{S}}$ follows from the fact that $L \rTo P_{\pi} X$ belongs to $\smallmap{S}$.

To check the universal property of $\spower X$ with $\in_X$, let $U \subseteq X \times I$ be an $\overline{\smallmap{S}}$-displayed family of subobjects of $X$ in $\overline{\ct{E}}$. We need to find a map \func{\rho}{I}{\spower X} such that $(\id \times \rho)^* \in_X = U$. 

As $U \rTo I \in \overline{\smallmap{S}}$, it fits into a covering square with $V \rTo J \in \smallmap{S}$ as follows:
\diag{ V \ar[d] \ar@{->}[r] & U \ar@{ >->}[d] \\
X \times J \ar[d] \ar[r] & X \times I \ar[d] \\
J \ar@{->>}[r]_q &  I.}
The fact that $V \rTo J$ belongs to \smallmap{S} means that for every $j \in J$ there is a morphism \func{\phi_j}{V_j}{X}, where $V_j$ is \smallmap{S}-small. Then, since $\pi$ is a representation, the following statement holds in \ct{E}:
\[ \forall j \in J \, \exists u \in U, \func{p}{E_u}{V_j} \, ( p \mbox{ is a cover}), \]
and hence the following as well:
\[ \forall j \in J \, \exists u \in U, \func{t}{E_u}{X} \,  ({\rm Im}(t) = {\rm Im}(\phi_j)) \]
(for $t$ take the composite of $p$ and $\phi_j$). Defining $G \subseteq J \times P_{\pi}(X)$ by
\[ (j, u, t) \in G \Leftrightarrow {\rm Im}(t) = {\rm Im}(\phi_j), \]
we can write this as 
\[ \forall j \in J \, \exists (u, t) \in P_{\pi}(X) \, ((j, u, t) \in G). \]
Since clearly 
\[ (j, u, t), (j, u', t') \in G \Rightarrow (u, t) \sim (u', t'), \]
$G$ defines the graph of a function \func{\sigma}{J}{\spower X}. This $\sigma$ coequalises the kernel pair of the cover \func{q}{J}{I}, for the following reason. The righthand arrow in the above diagram defines for every $i \in I$ a morphism \func{\psi_i}{U_i}{X}, and the fact that the entire diagram is a quasi-pullback means that
\[ {\rm Im}(\phi_j) = {\rm Im}(\psi_{qj}). \]
Therefore ${\rm Im}(\phi_j) = {\rm Im}(\phi_k)$, whenever $qj = qk$, or:
\[ (j, u, t) \in G, qj = qk \Rightarrow (k, u, t) \in G. \]
So $\sigma$ coequalises the kernel pair of $q$, and we find a morphism \func{\rho}{I}{\spower X} such that $\rho q = \sigma$. We leave the proof that it has the required property, and is the unique such, to the reader.
\end{proof}

An immediate corollary of this proposition is the following result, which is essentially Theorem I.3.1 on page 16 of \cite{joyalmoerdijk95}, but derived here using bounded exactness only.
\begin{coro}{PiEimpliesPE}
Let $(\ct{E}, \smallmap{S})$ be an exact category with a representable class of display maps \smallmap{S} satisfying {\bf($\Pi$E)}. Then \smallmap{S} also satisfies {\bf (PE)}. Moreover, there exists a natural transformation
\[ \func{\tau_X}{P_{\pi} X = \sum_{u \in U} X^{E_u}}{\spower X} \]
which is componentwise a cover.
\end{coro}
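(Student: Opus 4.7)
The plan is to obtain the corollary as a direct application of \refprop{presPiE} in the special situation where the exact completion is trivial, and then to extract the natural transformation $\tau_X$ from the explicit construction supplied by that proof.

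First I would invoke \refrema{excfordisplaymaps}: since $(\ct{E}, \smallmap{S})$ is exact as a category with display maps, the exact completion $(\overline{\ct{E}}, \overline{\smallmap{S}})$ coincides with $(\ct{E}, \smallmap{S}^{\rm cov})$, and the canonical embedding ${\bf y}$ may be taken to be the identity. Applying \refprop{presPiE} to $(\ct{E}, \smallmap{S})$ then yields both ${\bf (\Pi E)}$ and ${\bf (PE)}$ for $\smallmap{S}^{\rm cov}$ inside $\ct{E}$, and by \refprop{presPEcov} the power class object produced in this way serves equally well as a power class object for \smallmap{S}. This establishes the first assertion of the corollary.

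For the \emph{moreover} clause I would simply read the construction off the proof of \refprop{presPiE}: there $\spower X$ is built as the quotient of
\[ P_\pi(X) = \sum_{u \in U} X^{E_u} \]
by the \smallmap{S}-bounded equivalence relation $(u,t) \sim (u',t') \Leftrightarrow {\rm Im}(t) = {\rm Im}(t')$. Bounded exactness of $\ct{E}$ is precisely what allows this quotient to be formed inside $\ct{E}$ itself, and the canonical quotient map \func{\tau_X}{P_\pi(X)}{\spower X} is then, by construction, a cover. To verify naturality in $X$, I would check that a morphism \func{f}{X}{Y} induces $P_\pi(f)\colon (u,t) \mapsto (u, ft)$ which respects $\sim$ (because ${\rm Im}(ft)$ depends only on ${\rm Im}(t)$), and identify the descended map on quotients with the functorial action $\spower(f)$ by invoking the uniqueness clause in the universal property of $\spower Y$.

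The main obstacle is not any real calculation but a matter of bookkeeping: one has to verify that the ingredients entering the construction in \refprop{presPiE}, particularly the $\smallmap{S}$-boundedness of the image-equivalence relation $\sim$ on $P_\pi(X)$ and the availability of its quotient in $\ct{E}$, remain usable under the weaker hypothesis of \emph{bounded} exactness rather than full exactness, which was the setting of the original result in \cite{joyalmoerdijk95}. Once this is checked the rest of the argument is structurally identical to that of \refprop{presPiE}.
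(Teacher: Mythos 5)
Your proposal matches the paper's own treatment: the corollary is presented there as an immediate consequence of \refprop{presPiE}, obtained exactly as you describe by noting that for an exact category with display maps the exact completion is $(\ct{E}, \smallmap{S}^{\rm cov})$ itself, with $\tau_X$ the quotient map of $P_{\pi}(X)$ by the image-equivalence relation, a cover by construction. The only small slip is your appeal to \refprop{presPEcov}, which runs in the opposite direction (from \smallmap{S} to $\smallmap{S}^{\rm cov}$); the paper glosses over this same point, and in the intended applications \smallmap{S} is a class of small maps, so $\smallmap{S} = \smallmap{S}^{\rm cov}$ and the issue disappears.
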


We will now briefly discuss the stability of {\bf ($\Pi$E)} and {\bf ($\Pi$S)} under covered map. Again, stability of {\bf ($\Pi$S)} seems problematic, while for {\bf ($\Pi$E)} we have the following result:
\begin{prop}{PiEstableundercov}
Let $(\ct{E}, \smallmap{S})$ be an exact category with a class of display maps \smallmap{S}. When \smallmap{S} satisfies {\bf($\Pi$E)}, then so does $\smallmap{S}^{\rm cov}$.
\end{prop}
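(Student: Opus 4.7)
The plan is to prove that every $f\colon Y\to X$ in $\smallmap{S}^{\rm cov}$ admits a right adjoint to $f^{*}\colon \ct{E}/X\to \ct{E}/Y$, by lifting the construction to an $\smallmap{S}$-cover of $f$ and then descending in two stages: once along the top cover of the covering square (fixing $X'$), and once along the base cover.

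Concretely, choose a covering square exhibiting $f$ as covered by some $f'\colon Y'\to X'$ in $\smallmap{S}$, with $q\colon Y'\twoheadrightarrow Y$ on top and $p\colon X'\twoheadrightarrow X$ on the bottom, and factor $f' = \bar f\circ\bar q$ through the pullback $\bar Y := Y\times_X X'$, where $\bar f:= p^{*}f$ and $\bar q\colon Y'\twoheadrightarrow\bar Y$ is a cover. Given $g\colon Z\to Y$, first form $P:=\Pi_{f'}(q^{*}g)\to X'$, which exists by {\bf ($\Pi$E)} applied to $f'\in\smallmap{S}$. An element of $P$ over $x'\in X'$ may be viewed as a partial section $s\colon Y'_{x'}\to Z$ of $g$ lying over the composite $Y'_{x'}\to Y_{p(x')}$. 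Let $P'\subseteq P$ be the subobject of those $(x',s)$ for which $s$ factors through $\bar q|_{Y'_{x'}}\colon Y'_{x'}\twoheadrightarrow Y_{p(x')}$, i.e., $s(y'_1)=s(y'_2)$ whenever $q(y'_1)=q(y'_2)$. On $P'$ put an equivalence relation $\sim$ by declaring $(x'_1,s_1)\sim(x'_2,s_2)$ iff $p(x'_1)=p(x'_2)$ and, for all $y'_1\in Y'_{x'_1}$ and $y'_2\in Y'_{x'_2}$ with $q(y'_1)=q(y'_2)$, one has $s_1(y'_1)=s_2(y'_2)$. I then propose to define $\Pi_f(g):= P'/{\sim}$, with its evident projection to $X$.

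The key technical point is that $P'\subseteq P$ and the relation $\sim\,\subseteq P'\times P'$ are both $\smallmap{S}^{\rm cov}$-bounded, so that the quotient exists in $\ct{E}$ by the bounded exactness of $(\ct{E},\smallmap{S})$ (see \refrema{excfordisplaymaps}). Both conditions are universal quantifications along pullbacks of small diagonals, and so small by axiom {\bf (A9)}: the kernel pair $Y'\times_{\bar Y}Y'$ sits as a bounded subobject of the small map $Y'\times_{X'}Y'\to X'$ via the relative diagonal $\Delta_{\bar Y/X'}$, and $X'\times_X X'$ sits as a bounded subobject of $X'\times X'$ via $\Delta_X$. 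Bounded separation (\refrema{boundedsepforlocHeytcat}) then delivers the required boundedness of $P'$ and $\sim$.

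The main obstacle I anticipate is the verification of the universal property $\mathrm{Hom}_X(A,\Pi_f(g))\cong\mathrm{Hom}_Y(f^{*}A,Z)$. Starting from a map $f^{*}A\to Z$ over $Y$, pulling back along $q$ produces a map $q^{*}f^{*}A = f'^{*}p^{*}A\to q^{*}Z$ over $Y'$, which transposes via $P=\Pi_{f'}(q^{*}g)$ to a map $p^{*}A\to P$ over $X'$ that will factor through $P'$ (since the datum lived over $Y$) and coequalize the kernel pair of $p$ (since the datum lived over $X$). The effectiveness of the covers $\bar q$ and $p$ in the regular/exact category $\ct{E}$ should then let one assemble from any map $A\to P'/{\sim}$ a well-defined $f^{*}A\to Z$, and check that the two assignments are mutually inverse and natural in $A$. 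Disentangling these two nested layers of descent is the technically involved part of the proof.
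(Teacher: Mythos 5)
The paper actually omits this proof, saying only that it ``could go along the lines of Lemma I.1.2 on page 9 of \cite{joyalmoerdijk95}, all the time making sure we use bounded exactness only''; your construction --- forming $\Pi_{f'}(q^*g)$, cutting out the subobject of sections factoring through $\bar q$, and quotienting by the bounded equivalence relation $\sim$ --- is precisely that argument carried out, and it is correct. The only points worth making fully explicit are that the boundedness of $P'$ and of $\sim$ reduces via \refrema{boundedsepforlocHeytcat} to the smallness of $\Delta_Y$, $\Delta_X$ and the relative diagonal of $\bar Y$ over $X'$ (all available from {\bf (A9)} together with \reflemm{composlemma}), and that the descent step recovering a map $f^*A \rTo Z$ from a map $A \rTo P'/{\sim}$ uses the \emph{stability} of the exact quotient, which \refdefi{exactness} builds into the notion of quotient.
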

\begin{proof} We omit a proof, but it could go along the lines of Lemma I.1.2 on page 9 of \cite{joyalmoerdijk95}, all the time making sure we use bounded exactness only.
\end{proof}

\subsection{Inductive types}

The situation for the axioms for W-types is the same as that for the $\Pi$-types. We conjecture that {\bf (WS)} is not a stable under exact completion, like {\bf ($\Pi$S)}, while the axiom {\bf (WE)} is stable under exact completion for representable classes of display maps. It is by no means easy to establish this, and the remainder of this subsection will be devoted to a proof. (The results that will now follow are, in fact, variations on results of the first author, published in \cite{berg05}.)

We first prove the following characterisation theorem:
\begin{theo}{charthmWtypes}
Let \ct{E} be a category with a class of small maps \smallmap{S} satisfying {\bf (PE)}. Assume that \func{f}{B}{A} is a small map. The following are equivalent for a $P_{f}$-algebra $(W, \func{{\rm sup}}{P_{f}(W)}{W})$:
\begin{enumerate}
\item $(W, {\rm sup})$ is a W-type for $f$.
\item The structure map ${\rm sup}$ is an isomorphism and $W$ has no proper $P_f$-subalgebras in $\ct{E}$.
\item The structure map ${\rm sup}$ is an isomorphism and $X^{*}W$ has no proper $P_{X^{*}f}$-subalgebras in $\ct{E}/X$, for every object $X$ in \ct{E}.
\item $(W, {\rm sup})$ is an indexed W-type for $f$.
\end{enumerate}
\end{theo}
\begin{proof} First we establish the equivalence of (1) and (2).

(1) $\Rightarrow$ (2): These properties are enjoyed by all initial algebras, so also by W-types.

(2) $\Rightarrow$ (1): Assume ${\rm sup}$ is an isomorphism and $W$ has no proper $P_f$-subalgebras. The latter means that we can prove properties of $W$ by induction. For if $L \subseteq W$ and $L$ is \emph{inductive} in the sense that
\[ \forall b \in B_a: tb \in L \Rightarrow {\rm sup}_a(t) \in L, \]
then $L$ defines a $P_f$-subalgebra of $W$ and therefore $L = W$.

Our first aim is to define a map
\[ \func{\rm tc}{W}{\spower W}, \]
that intuitively sends a tree to its transitive closure: the collection of all its subtrees, together with the tree itself. This we can do as follows. Call $A \in \spower W$ \emph{transitive}, when it is closed under subtrees. Formally: 
\[ {\rm sup}_a(t) \in A, b \in B_a \Rightarrow tb \in A. \]
Define TC($w$,$A$) to mean: $A$ is the least transitive subset of $W$ containing $a$. Formally:
\[ w \in A \land \forall B ( B \mbox{ is transitive} \land w \in B \Rightarrow A \subseteq B). \]
We can then define $L = \{ w \in W \, : \, \exists! A \in \spower W \, {\rm TC}(w, A) \}$. As $L$ is inductive, the object TC will be the graph of a function \func{\rm tc}{W}{\spower W}.

Now let $(X, \func{m}{P_{f}(X)}{X})$ be an arbitrary $P_{f}$-algebra. We need to construct a $P_f$-algebra morphism \func{k}{W}{X}. Intuitively, we do this by glueing together partial solutions to this problem, so-called attempts. An \emph{attempt} for an element $w \in W$ is a morphism \func{g}{{\rm tc(w)}}{X} with the property that for any tree ${\rm sup}_a(t) \in {\rm tc}(w)$ the following equality holds:
\[ g({\rm sup}_a t) = m(\lambda b \in B_a. g(tb)). \]
Intuitively, it is a $P_f$-algebra morphism \func{k}{W}{X} defined only on the transitive closure of $w$. Notice that there is an object of attempts in \ct{E}, because ${\rm tc}(w)$ is a small object for every $w \in W$, and the validity of {\bf (PE)} implies that of {\bf ($\Pi$E)}.

Our next aim is to show that for every $w \in W$ there is a unique attempt. Let $L$ be the collection of all those $w \in W$ such that for every $v \in {\rm tc}(w)$ there exists a unique attempt. We show that $L$ is inductive. So assume that for a fixed \func{t}{B_a}{W}, unique attempts $g_b$ have been defined for every $tb$ with $b \in B_a$. Now define an attempt for ${\rm sup}_a(t)$ by putting
\begin{displaymath}
\begin{array}{lcll}
g(v) & = & g_b(v) & \mbox{if } v \in {\rm tc}(tb), \\
g({\rm sup}_a(t)) & = & m(\lambda b \in B_a. g_b(tb)). &
\end{array}
\end{displaymath}
One readily sees that $g$ is the unique attempt for ${\rm sup}_a(t)$, so that ${\rm sup}_a(t)$ belongs to $L$. Therefore $L$ is inductive and unique attempts exist for every $w \in W$.

The desired map \func{k}{W}{X} can be defined by
\begin{eqnarray*}
k(w) = x & \Leftrightarrow & g(w) = x,
\end{eqnarray*}
where $g$ is the unique attempt for $w$. One uses the definition of an attempt to verify that this a $P_f$-algebra morphism. And it is the unique such, because restricting a $P_f$-algebra morphism $k$ to the transitive closure of a fixed tree $w$ gives an attempt for $w$.

(2) $\Rightarrow$ (3): If $T$ is a $P_{X^{*}f}$-subalgebra of $X^{*}W$ in $\ct{E}/X$, then
\[ L = \{ \, w \in W \, : \, \forall x \in X \, (x, w) \in T \, \} \]
defines a $P_f$-subalgebra of $W$ in \ct{E}. So if $W$ has no proper $P_f$-subalgebras, $X^*W$ has no proper $P_{X^*f}$-subalgebras.

(3) $\Rightarrow$ (4): This is the argument from (2) to (1) applied in all slices of \ct{E}.

(4) $\Rightarrow$ (1): By definition.
\end{proof}

We will need the notion of a collection span.
\begin{defi}{collspan}
A span $(g, q)$ in \ct{E}
\diag{ A & B \ar[r]^q \ar[l]_g & Y}
is called a \emph{collection span}, when, in the internal logic, it holds that for any map \func{f}{F}{B_a} covering some fibre of $g$, there is an element $a' \in A$ together with a cover \func{p}{B_{a'}}{B_a} over $Y$ which factors through $f$.

Diagrammatically, we can express this by asking that for any map $E \rTo A$ and any cover $F \rTo E \times _A B$ there is a diagram of the form
\diag{
& & Y & & \\
B \ar[d]_g \ar@/^/[urr]^q & E' \times_A B \ar[r] \ar[l] \ar[d] & F \ar@{->>}[r] & E \times_A B \ar[r] \ar[d] & B \ar[d]^g \ar@/_/[ull]_q \\
A & E' \ar@{->>}[rr] \ar[l] & & E \ar[r] & A,}
where the middle square is a covering square, involving the given map $F \rTo E \times _A B$, while the other two squares are pullbacks.
\end{defi}

\begin{lemm}{replbycollmap}
Assume \ct{E} is a category equipped with a representable class of display maps \smallmap{S} satisfying {\bf ($\Pi$E)}. Then every $\func{f}{Y}{X} \in \smallmap{S}$ fits into a covering square
\diag{ B \ar[d]_g \ar@{->>}[r]^q & Y \ar[d]^f \\
A \ar@{->>}[r]_p & X, }
where $g$ belongs to \smallmap{S} and $(g, q)$ is a collection span over $X$.
\end{lemm}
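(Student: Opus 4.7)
The plan is to let $A$ parameterise all triples $(x, u, t)$ consisting of $x \in X$, $u \in U$ and a cover $t: E_u \twoheadrightarrow f^{-1}(x)$, and to take $B = A \times_U E$ with $g: B \to A$ the pullback of the representation $\pi: E \to U$.

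To construct $A$, we first invoke {\bf ($\Pi$E)} to form $P = \Pi_\pi(\pi^* Y) \to U$, whose internal elements are pairs $(u, t: E_u \to Y)$, and then define $A \subseteq X \times P$ by the bounded formula
\[
\forall e \in E_u\,(ft(e) = x) \;\wedge\; \forall y \in f^{-1}(x)\,\exists e \in E_u\,(t(e) = y),
\]
which is bounded by \refrema{boundedsepforlocHeytcat}, since both quantifiers range over the small objects $E_u$ and $f^{-1}(x)$. Setting $B = A \times_U E$, the map $g: B \to A$ is small as a pullback of $\pi$, and $q: B \to Y$ is the evaluation $(x,u,t,e) \mapsto t(e)$; together with the first projection $p: A \to X$ these assemble into a span over $X$ whose square commutes by the first clause in the definition of $A$. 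We check this square is covering: $p$ is a cover because representability applied to $f$ produces a covering square $A_0 \twoheadrightarrow Y$, $B_0 \twoheadrightarrow X$ with $A_0 = B_0 \times_U E$, whose fibrewise analysis (via pullback-stability of covering squares, \reflemm{covsqpasting}) shows that each fibre map $E_{u(b_0)} \to f^{-1}(x(b_0))$ is a cover and so yields a factorisation $B_0 \to A$; and $B \to A \times_X Y$ is a cover by the second clause in the definition of $A$.

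For the collection span property, given $E^* \to A$ in $\ct{E}/X$ and a cover $F \twoheadrightarrow E^* \times_A B$, we note that $E^* \times_A B = E^* \times_U E \to E^*$ is small, and apply Collection~(A7) to obtain a map $h: Z \to E_1$ in $\smallmap{S}$ together with a cover $E_1 \twoheadrightarrow E^*$, fitting into a covering square with $Z \to F \twoheadrightarrow E^* \times_A B$ on top. Applying representability to $h$ then gives a cover $E^{**} \twoheadrightarrow E_1$ and a map $u: E^{**} \to U$ with $E^{**} \times_U E \twoheadrightarrow Z \times_{E_1} E^{**}$ a cover. We then define a new map $E^{**} \to A$ sending $e^{**}$ to $(x(e^{**}), u(e^{**}), t_{e^{**}})$, where $x(e^{**})$ is inherited from $E^{**} \twoheadrightarrow E^* \to A \to X$ and $t_{e^{**}}: E_{u(e^{**})} \to Y$ is induced fibrewise on $E^{**} \times_U E \to E^{**}$ by the composite $E^{**} \times_U E \to Z \to F \to E^* \times_A B \xrightarrow{q\circ \mathrm{pr}_B} Y$. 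Under this new map, $E^{**} \times_A B$ computes as $E^{**} \times_U E$, and $E^{**} \times_U E \to F$ provides the required factorising map.

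The main obstacle is the fibrewise bookkeeping: showing that $t_{e^{**}}$ really does cover $f^{-1}(x(e^{**}))$, and that the middle square of the collection span diagram is genuinely a covering square, both reduce to analysing the fibrewise cover $E_{u(e^{**})} \twoheadrightarrow E_{u(e^*)}$ arising as the composite of $E_{u(e^{**})} \twoheadrightarrow Z_{e_1}$ (from representability of $h$) with $Z_{e_1} \twoheadrightarrow E_{u(e^*)}$ (from the outer covering square of Collection, \reflemm{covsqpasting}) over each $e^* \in E^*$.
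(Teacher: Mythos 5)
Your proposal is correct and follows essentially the same route as the paper: $A$ is the object of triples $(x,u,t)$ with $t\colon E_u \twoheadrightarrow Y_x$ a cover, $B = A\times_U E$ with $g$ a pullback of $\pi$ and $q$ evaluation, $p$ is a cover by representability applied to $f$, and the collection-span property is obtained by one application of Collection {\bf (A7)} followed by one application of representability to re-index the resulting small cover by a fibre of $\pi$. The paper carries out this last verification in the internal logic over a single $a = (x,u,h)\in A$, whereas you do it externally with the objects $E_1, Z, E^{**}$; the content is the same.
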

\begin{proof} As usual, we denote the representation of \smallmap{S} by $\pi: E \to U$.

We define $A$ by
\[ A = \Sigma_{x \in X, u \in U} \{ h:E_u \rTo Y_x \, : \, h \mbox{ is a cover} \},\]
and $p$ is the obvious projection. The fibre of $g$ above an element $(x, u, h)$ is $E_u$, and $q$ sends a pair $(x, u, h, e)$ with $(x, u, h) \in A$ and $e \in E_u$ to $h(e)$. It follows that $p$ is a cover, because $\pi$ is a representation, and the square is covering, because we require $h$ to be a cover.

When a cover \func{s}{T}{B_a} has been given for some $a = (x, u, h)$, there is an element $v \in U$ and a cover \func{t}{E_{v}}{B_a} factoring through $s$. (This is by using the Collection axiom {\bf (A7)} and representability.) Consider the element $a' = (x, v, qt) \in A$. The map
\diag{ B_{a'} \ar[r]^{\cong} & E_v \ar@{->>}[r]^t & B_a }
is a cover over $Y$ which factors through $s$.
\end{proof}

\begin{prop}{redlemmaWtypes}
Assume \ct{E} is an exact category with a class of small maps \smallmap{S} satisfying {\bf (PE)}. Assume furthermore $f \in \smallmap{S}$ fits into a covering square
\diag{ B \ar[d]_g \ar@{->>}[r]^q & Y \ar[d]^f \\
A \ar@{->>}[r]_p & X, }
where $(g, q)$ is a collection span over $X$, and $g$ is a small map for which the W-type exists. Then the W-type for $f$ also exists.
\end{prop}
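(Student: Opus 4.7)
The plan is to construct $W_f$ as a sub-quotient of $W_g$ in \ct{E} and to verify its universal property via clause (2) of \reftheo{charthmWtypes}. The informal picture is that elements of $W_g$ are well-founded trees with nodes labelled in $A$ and edges enumerated by $B$; the covering square relabels such a tree as a tree over $f$, but only certain ``coherent'' $W_g$-trees give consistent $f$-trees, and different such trees may be bisimilar. The construction extracts the space of $f$-trees up to bisimulation.

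Concretely, I would define a subobject $W_g^c \subseteq W_g$ of \emph{coherent} trees together with a bisimulation relation $\sim$ on $W_g^c$ by simultaneous recursion: $\sup_a t \in W_g^c$ iff every $t(b) \in W_g^c$ and $t(b) \sim t(b')$ whenever $q(b) = q(b')$; and $\sup_a t \sim \sup_{a'} t'$ iff $p(a) = p(a')$ and for every $b \in B_a$ there is $b' \in B_{a'}$ with $q(b) = q(b')$ and $t(b) \sim t'(b')$, and symmetrically. The axiom \textbf{(PE)} is exactly what is needed to make the simultaneous recursion precise (as in the construction of the transitive closure function in the proof of \reftheo{charthmWtypes}). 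Tree-induction then shows $\sim$ is an equivalence relation and $\smallmap{S}$-bounded, so bounded exactness of \ct{E} yields a quotient $\func{\pi}{W_g^c}{W}$ with $W := W_g^c/{\sim}$.

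Next I would equip $W$ with a $P_f$-algebra structure $\func{\mathrm{sup}}{P_f(W)}{W}$. Given $(x, \func{s}{Y_x}{W}) \in P_f(W)$, one uses the Collection axiom \textbf{(A7)} together with the covers $p$ and $\pi$ to pick, on a cover of the parameter object, an $a \in A$ with $p(a) = x$ and a lift $\func{s_0}{Y_x}{W_g^c}$ of $s$ along $\pi$. Setting $\tilde s := s_0 \circ q|_{B_a}$, coherence is automatic (since $\tilde s$ is constant on the fibres of $q$), so $\sup_a \tilde s \in W_g^c$ and we put $\mathrm{sup}(x, s) := [\sup_a \tilde s]$. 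The main technical obstacle is well-definedness on $P_f(W)$: independence of the choice of $s_0$ is immediate from the definition of $\sim$, but independence of the choice of $a$ is precisely where the collection span property of $(g, q)$ is essential --- it allows any two choices $a, a' \in A$ over the same $x$ to be linked by a common refinement $a'' \in A$ whose fibre $B_{a''}$ covers $B_a$ and $B_{a'}$ compatibly with $q$, which then yields a bisimulation between the two candidate sup-trees.

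Finally, I would verify the two conditions of \reftheo{charthmWtypes}(2). Bijectivity of $\mathrm{sup}$ follows from the construction: on one hand, $\sim$ was rigged so that $[\sup_a \tilde s]$ depends only on the $P_f$-data $(p(a), s)$; on the other, every $[w] \in W$ with $w = \sup_a t \in W_g^c$ arises as $\mathrm{sup}(p(a), s)$, where $s : Y_{p(a)} \to W$ is the descent of $\pi t$ through $q|_{B_a}$ (which exists precisely because $w$ is coherent). For the absence of proper $P_f$-subalgebras, given $V \subseteq W$ closed under $\mathrm{sup}$, one shows that $\pi^{-1}(V) \subseteq W_g^c$ is closed under the coherent restriction of $\sup_g$ --- the collection span property being used once more to translate closure under $\mathrm{sup}$ into closure under $\sup_g$ on coherent inputs --- and this extends to a $P_g$-subalgebra of $W_g$, which by initiality of $W_g$ must be all of $W_g$; hence $\pi^{-1}(V) = W_g^c$ and $V = W$, completing the proof.
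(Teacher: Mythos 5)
Your overall strategy coincides with the paper's: realise $W_f$ as a subquotient of $W_g$, cut out the (hereditarily) coherent trees, define a bisimulation by recursion on trees using tests valued in $\spower 1$ (which is where {\bf (PE)} and the smallness of diagonals enter), take the quotient by bounded exactness, and verify the result via clause (2) of the characterisation theorem. The difference that matters is where the collection-span hypothesis is spent, and there your proposal has a genuine gap. In the construction of the structure map you claim that, after passing to a cover of the parameter object, Collection {\bf (A7)} yields a lift $s_0 \colon Y_x \to W_g^c$ of $s$ along the quotient cover $\pi \colon W_g^c \to W$. It does not: covers do not split, and Collection only produces a lift defined on some further cover $T \to Y_x$, not on $Y_x$ itself; covering the parameter object cannot repair this, because the obstruction lives in the fibre $Y_x$ rather than in the base. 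Consequently $\tilde{s} = s_0 \circ q|_{B_a}$ is not available as written. This is exactly the step for which the collection-span property of $(g,q)$ is needed: applying it to the cover $B_a \times_W W_g^c \to B_a$ produces a possibly different index $a'$ over the same $x$ together with a cover $B_{a'} \to B_a$ over $Y_x$ factoring through it, hence a genuine lift $t \colon B_{a'} \to W_g^c$ with $\pi t = s \circ q|_{B_{a'}}$, and one sets $\mathrm{sup}(x,s) = [\mathrm{sup}_{a'} t]$. This is how the paper proceeds.

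Conversely, the place where you do invoke the collection span, namely well-definedness via a ``common refinement'' of two indices over $x$, does not need it. If $t$ and $t'$ are any two lifts over indices $a'$ and $a''$ with $pa' = pa''$, then for all $b, b'$ with $qb = qb'$ one has $\pi(tb) = s(qb) = s(qb') = \pi(t'b')$, hence $tb \sim t'b'$, and the defining clause of the bisimulation gives $\mathrm{sup}_{a'} t \sim \mathrm{sup}_{a''} t'$ directly (with your ``exists $b'$'' formulation of $\sim$ the same conclusion holds on coherent trees). The remaining steps --- the inverse of $\mathrm{sup}$ obtained by descending $\pi t$ along the cover $q|_{B_a}$, and the absence of proper subalgebras obtained by pulling a $P_f$-subalgebra of the quotient back to a $P_g$-subalgebra of $W_g$ --- are sound and agree with the paper; neither requires a further appeal to the collection span once the structure map has been built correctly.
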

\begin{proof}
Write $W$ for the W-type for $g$ and sup for the structure map. The idea is to use the well-founded trees in $W$, whose branching type is determined by $g$, to represent well-founded trees whose branching type is determined by $f$. In fact, $W_f$ will be obtained as a subquotient of $W$.

We wish to construct a binary relation $\sim$ on $W$ with the following property:
\begin{labequation}{despropofsim}
\begin{array}{lcl}
{\rm sup}_a t \sim {\rm sup}_{a'} t' & \Leftrightarrow & pa=pa' \mbox{ and } \\
& &  \forall b \in B_a, b' \in B_{a'} \, qb=qb' \Rightarrow tb \sim t'b'.
\end{array}
\end{labequation}%
We will call a relation $\sim$ with this property a \emph{bisimulation}, and using the inductive properties of $W$ we can prove that bisimulations on $W$ are unique. To see that there exists a bisimulation on $W$ we employ the same techniques as in \reftheo{charthmWtypes}. Recall in particular from the proof of \reftheo{charthmWtypes} the construction of a transitive closure ${\rm tc}(w)$ of an element $w \in W$: it is really the small object of all its subtrees, together with $w$ itself. In the same way, we can also define ${\rm st}(w)$, the collection of all subtrees of $w$ (not including $w$).

Since all diagonals are assumed to be small and $\spower 1$ classifies bounded subobjects, there is, for every object $X$, a function $X \times X \rTo \spower 1$ which assigns to every pair $(x, y) \in X \times X$ the small truth-value of the statement ``$x = y$''. We will denote it by $[ - = - ]$.
For a pair $(w,w') \in W^2$, call a function \func{g}{{\rm tc}(w) \times {\rm tc}(w')}{\spower 1} a \emph{bisimulation test}, when for all ${\rm sup}_a t \in {\rm tc}(w), {\rm sup}_{a'} t' \in {\rm tc}(w')$ the equality
\[ g({\rm sup}_a t, {\rm sup}_{a'} t') = [pa = pa'] \land \bigwedge_{b \in B_a, b' \in B_{a'}} ([qb = qb'] \rightarrow g(tb,t'b') ) \]
holds. Intuitively, a bisimulation test measures the degree to which two elements are bisimular, by sending a pair $(w,w')$ to the truth-value of the statement ``$w$ and $w'$ are bisimilar''. 

Our first aim is to show that for every pair $(w, w')$ there is a \emph{unique} bisimulation test. For this purpose, it suffices to show that for
\begin{eqnarray*}
L & = & \{ \, w \in W \, : \, \mbox{there is a unique bisimulation test} \\
& & \mbox{ for every pair } (w, w') \mbox{ with } w' \in W \, \}
\end{eqnarray*}
the following property holds:
\[ {\rm st}(w) \subseteq L \Rightarrow w \in L. \]
Because this will imply that $M = \{ w \in W \, : \, {\rm tc}(w) \subseteq L \}$ is inductive (i.e., defines a $P_{g}$-subalgebra of $W$), and therefore equal to $W$. As $M \subseteq L \subseteq W$ also $L = W$, and it follows that for every pair there is a unique bisimulation test. 

So let $w , w' \in W$ be given such that ${\rm st}(w) \subseteq L$. We need to show that for $(w, w')$ there is a unique bisimulation test $g$. We define $g(v, v')$ for $v \in {\rm tc}(w), v' \in {\rm tc}(w')$ as follows:
\begin{itemize}
\item If $v \in {\rm st}(w)$, then $v \in L$ and the pair $(v, v')$ has a unique bisimulation test $h$. We set $g(v, v') = h(v, v')$.
\item If $v = w = {\rm sup}_a t$ and $v' = {\rm sup}_{a'} t'$, then for every $b \in B_a$ and $b' \in B_{a'}$ we know that $tb \in {\rm st}(w) \subseteq L$ by induction hypothesis, and therefore there exists a unique bisimulation test $h_{b, b'}$ for $(tb, t'b')$. We set 
\[ g(v, v') := [pa = pa'] \land \bigwedge_{b \in B_a, b' \in B_{a'}} ([qb = qb'] \rightarrow h_{b, b'}(tb, t'b') ). \]
\end{itemize}
We leave to the reader the verification that this defines the unique bisimulation test $g$ for $(w, w')$.

Now we have established that for every pair there exists a unique bisimulation test, we can define a binary relation $\sim$ on $W$ by
\begin{eqnarray*}
w \sim w' & \Leftrightarrow & g(w,w') = \top,
\end{eqnarray*}
where $g$ is the unique bisimulation test for $(w, w')$. By construction, the relation $\sim$ is a \emph{bounded} bisimulation on $W$.

We can now show, using that $\sim$ is the unique bismimulation, that the relation is both symmetric and transitive. Since $\sim$ is bounded, it defines a bounded equivalence relation on the object $R = \{ w \in W \, : \, w \sim w \}$ of reflexive elements. Using bounded exactness, we can take its quotient $V$, writing $[-]$ for the quotient map $R \rTo V$. 

We claim $V$ is the W-type associated to $f$. To show that $V$ has the structure of a $P_f$-algebra, we need to define a map \func{s}{P_fV}{V}. So start with an $x \in X$ and a map \func{k}{Y_x}{V}. Choosing $a \in A$ to be such that $pa = x$, we have
\[ \forall b \in B_a \, \exists r \in R \, kqb = [r]. \]
Since $(g, q)$ is a collection span over $X$, there is a (potentially) different $a' \in A$ with $pa' = x$, and a map \func{t}{B_{a'}}{R} such that for all $b' \in B_{a'}$:
\[ kqb' = [tb']. \]
We set $s(x, k) = [{\rm sup}_{a'} t]$. The equivalence in \refeq{despropofsim} ensures that this value is independent of the choices we have made.

Finally, we use \reftheo{charthmWtypes} to prove that $(V, s)$ is the W-type for $f$. For showing that $s$ is an isomorphism, we need to construct an inverse $i$ for $s$. Now, every $v \in V$ is of the form $[w]$ for a reflexive element $w = {\rm sup}_a t$. Since $w$ is reflexive the equation
\[ k([b]) = [t(b)] \mbox{ for all } b \in B_a \]
defines a function \func{k}{Y_{pa}}{V}. So one may set $iv = (pa, k)$, which is, again by \refeq{despropofsim}, independent of the choice of $a$.

It remains to be shown that $V$ has no proper $P_f$-subalgebras. For this one proves that if $L$ is $P_f$-subalgebra of $V$, then
\[ T = \{ w \in W \, : \, w \sim w \Rightarrow [w] \in L \} \]
defines a $P_q$-subalgebra of $W$.
\end{proof}

\begin{lemm}{ypresexpons}
Let \func{{\bf y}}{(\ct{E}, \smallmap{S})}{(\overline{\ct{E}}, \overline{\smallmap{S}})} be the exact completion of a category with a class of display maps. Then $\bf y$ preserves the exponentials that exist in \ct{E}.
\end{lemm}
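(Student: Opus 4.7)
Let $f \colon B \to A$ be an exponentiable map in $\mathcal{E}$, with exponential $X^f \to A$ and evaluation counit $\mathrm{ev} \colon X^f \times_A B \to X$ (over $B$). The plan is to verify directly that $\mathbf{y}(X^f) \to \mathbf{y}A$, equipped with $\mathbf{y}(\mathrm{ev}) \colon \mathbf{y}(X^f) \times_{\mathbf{y}A} \mathbf{y}B \to \mathbf{y}X$ (using that $\mathbf{y}$ preserves pullbacks), satisfies the universal property of the exponential of $\mathbf{y}X$ along $\mathbf{y}f$ in $\overline{\mathcal{E}}$. The three tools I will lean on are: (i) $\mathbf{y}$ is full, faithful and preserves finite limits, so $\mathrm{Hom}_{\overline{\mathcal{E}}}(\mathbf{y}Z,\mathbf{y}W) = \mathrm{Hom}_{\mathcal{E}}(Z,W)$ and pullbacks of the form $\mathbf{y}Z \times_{\mathbf{y}A} \mathbf{y}W$ are $\mathbf{y}(Z\times_A W)$; (ii) $\mathbf{y}$ is covering, so any object (in particular any slice object) is a quotient of some $\mathbf{y}Z$; (iii) $\overline{\mathcal{E}}$ is exact, so every cover is the coequaliser of its kernel pair, and this coequaliser is stable under pullback.

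So fix an object $T \to \mathbf{y}A$ in $\overline{\mathcal{E}}/\mathbf{y}A$ and a morphism $\phi \colon T \times_{\mathbf{y}A} \mathbf{y}B \to \mathbf{y}X$ over $\mathbf{y}B$. Choose a cover $q \colon \mathbf{y}T_0 \twoheadrightarrow T$; by fullness/faithfulness the structure map $\mathbf{y}T_0 \to \mathbf{y}A$ comes from a unique $T_0 \to A$ in $\mathcal{E}$. Pulling back along $\mathbf{y}f$ gives a cover $\mathbf{y}(T_0 \times_A B) \twoheadrightarrow T \times_{\mathbf{y}A} \mathbf{y}B$, and composing with $\phi$ produces a map $\mathbf{y}(T_0 \times_A B) \to \mathbf{y}X$, which by fullness comes from a unique $\psi \colon T_0 \times_A B \to X$ over $B$. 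Transposing $\psi$ across the exponential adjunction in $\mathcal{E}$ yields $\hat{\psi}\colon T_0 \to X^f$ over $A$, and applying $\mathbf{y}$ gives $\mathbf{y}\hat\psi \colon \mathbf{y}T_0 \to \mathbf{y}(X^f)$ over $\mathbf{y}A$.

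The crux of the argument is to show that $\mathbf{y}\hat\psi$ descends along $q$ to a map $\tilde\phi\colon T \to \mathbf{y}(X^f)$. Form the kernel pair $K \rightrightarrows \mathbf{y}T_0$ of $q$ in $\overline{\mathcal{E}}/\mathbf{y}A$, and, using covering, choose a cover $\mathbf{y}K_0 \twoheadrightarrow K$. By fullness and faithfulness the two composites $\mathbf{y}K_0 \rightrightarrows \mathbf{y}T_0$ come from a pair $K_0 \rightrightarrows T_0$ in $\mathcal{E}$. Because $\mathbf{y}$ preserves pullbacks and covers are pullback-stable, pulling this whole situation back along $\mathbf{y}f$ exhibits $\mathbf{y}(K_0 \times_A B) \twoheadrightarrow K \times_{\mathbf{y}A}\mathbf{y}B$ as a cover of the kernel pair of the pullback cover $\mathbf{y}(T_0 \times_A B) \twoheadrightarrow T\times_{\mathbf{y}A}\mathbf{y}B$. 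Since $\phi$ defines a morphism out of the coequaliser $T\times_{\mathbf{y}A}\mathbf{y}B$, its precomposition with $\mathbf{y}(T_0 \times_A B)$ coequalises the two maps $\mathbf{y}(K_0\times_A B) \rightrightarrows \mathbf{y}(T_0 \times_A B)$; by faithfulness, the two maps $K_0 \times_A B \rightrightarrows T_0 \times_A B \xrightarrow{\psi} X$ agree in $\mathcal{E}$, and transposing this equality through the adjunction in $\mathcal{E}$ gives that the two composites $K_0 \rightrightarrows T_0 \xrightarrow{\hat\psi} X^f$ agree. Applying $\mathbf{y}$ and using that $\mathbf{y}K_0 \twoheadrightarrow K$ is epi, we conclude that $\mathbf{y}\hat\psi$ coequalises $K \rightrightarrows \mathbf{y}T_0$, hence factors uniquely through the coequaliser as the required $\tilde\phi \colon T \to \mathbf{y}(X^f)$.

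What remains is to check that $\mathbf{y}(\mathrm{ev})\circ(\tilde\phi\times_{\mathbf{y}A}\mathbf{y}B) = \phi$ and that $\tilde\phi$ is the only such map; both follow from the same pattern (precompose with the cover $\mathbf{y}T_0 \twoheadrightarrow T$, reduce to the corresponding equation of maps out of $\mathbf{y}(T_0 \times_A B)$, lift to $\mathcal{E}$ by fullness, and use the universal property of $X^f$ in $\mathcal{E}$ together with faithfulness). The main obstacle in this argument is precisely the descent step just described: one has to be careful that the equation testifying to descent, which naturally lives in $\overline{\mathcal{E}}$, can be pushed down to the corresponding equation in $\mathcal{E}$, where the universal property of $X^f$ is available. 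The device that makes this work is to cover both $T$ \emph{and} its kernel pair by objects from $\mathcal{E}$, so that fullness and faithfulness of $\mathbf{y}$ can translate every assertion in and out of $\mathcal{E}$.
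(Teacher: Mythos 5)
Your proposal is correct and is precisely the diagram chase the paper alludes to: the paper's entire proof is the remark that every object of $\overline{\ct{E}}$ is a quotient of (i.e.\ is covered by) an object coming from \ct{E}, and your argument carries this out in detail, using that {\bf y} is full, faithful, covering and pullback-preserving to transport the universal property of $X^f$ across the embedding. The descent step you flag as the crux is handled correctly (covering both $T$ and the kernel pair of $\mathbf{y}T_0 \twoheadrightarrow T$ by objects of \ct{E} and using naturality of the transposition), so this matches the intended proof, just written out in full.
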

\begin{proof}
A trivial diagram chase: the key fact is that any object in $\overline{\ct{E}}$ arises as a quotient of an equivalence relation in \ct{E}.
\end{proof}

\begin{theo}{transfWE}
Let \func{{\bf y}}{(\ct{E}, \smallmap{S})}{(\overline{\ct{E}}, \overline{\smallmap{S}})} be the exact completion of a category with a representable class of display maps \smallmap{S} satisfying {\bf ($\Pi$E)} and {\bf (WE)}. Then $\overline{\smallmap{S}}$ satisfies {\bf (WE)} as well.
\end{theo}
\begin{proof} In this proof it might be confusing to drop the occurences of {\bf y}, so for once we insert them.

We first want to show that every map of the form ${\bf y}f$ with $f \in \smallmap{S}$ has a W-type in $\overline{\ct{E}}$. From \reflemm{ypresexpons} we learn that the functor ${\bf y}$ commutes with $P_f$. This means that ${\bf y}$ does also commute with $W$: using \reftheo{charthmWtypes}, we see that we only need to show that ${\bf y}W_f$ has no proper $P_{{\bf y}f}$-subalgebras. But this is immediate, since {\bf y} induces a bijective correspondence between Sub($W_f$) in \ct{E} and Sub(${\bf y}W_f$) in $\overline{\ct{E}}$.

Now the general case: by definition, any map $f \in \overline{\smallmap{S}}$ fits into a covering square as follows:
\diag{ {\bf y}X \ar[d]_{{\bf y}f'} \ar[r]^p &  A \ar[d]^f \\
{\bf y}Y \ar[r] & B,}
with $f' \in \smallmap{S}$. By \reflemm{replbycollmap}, $f'$ fits into a covering square in \ct{E}
\diag{ M \ar[d]_{g} \ar[r]^q &  X \ar[d]^{f'} \\
N \ar[r] & Y,}
where $g \in \smallmap{S}$ and $(g, q)$ is a collection span over $Y$. All of this is preserved by {\bf y}. Moreover, $({\bf y}g, p{\bf y}q)$ is a collection span over $B$. This means that we can apply \refprop{redlemmaWtypes} to deduce that a W-type for $f$ exists.
\end{proof}

\begin{coro}{WEundercov}
Let $(\ct{E}, \smallmap{S})$ be an exact category with a representable class of display maps \smallmap{S} satisfying {\bf($\Pi$E)}. When \smallmap{S} satisfies {\bf(WE)}, then so does $\smallmap{S}^{\rm cov}$.
\end{coro}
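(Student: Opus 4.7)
The plan is to obtain this corollary as a direct instance of \reftheo{transfWE}. By \refrema{excfordisplaymaps}, when $(\ct{E}, \smallmap{S})$ is exact as a category with display maps, its exact completion $(\overline{\ct{E}}, \overline{\smallmap{S}})$ is equivalent to $(\ct{E}, \smallmap{S}^{\mathrm{cov}})$ itself, and the universal embedding $\bf y$ may, up to equivalence, be taken to be the identity on \ct{E}. The hypotheses of the corollary---representability together with {\bf ($\Pi$E)} and {\bf (WE)} for \smallmap{S}---are precisely the hypotheses of \reftheo{transfWE}, so that theorem yields {\bf (WE)} for $\overline{\smallmap{S}}$, which under the identification above is {\bf (WE)} for $\smallmap{S}^{\mathrm{cov}}$.

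If one prefers a self-contained argument that avoids the detour through the exact completion, essentially the same proof can be run in place. Given $f \in \smallmap{S}^{\mathrm{cov}}$, first choose a map $f' \in \smallmap{S}$ covering $f$, and then apply \reflemm{replbycollmap} to $f'$ to find $g \in \smallmap{S}$ fitting in a covering square with $f'$ such that $(g,q)$ is a collection span over the codomain of $f'$. Pasting the two covering squares via \reflemm{covsqpasting} shows that $g$ covers $f$ as well, and the resulting span remains a collection span when read over the codomain of $f$ (being a collection span over a larger base is a strictly weaker condition). Finally, invoke \refprop{redlemmaWtypes} in $(\ct{E}, \smallmap{S}^{\mathrm{cov}})$ to produce the W-type for $f$ from the W-type of $g$, which exists since $g \in \smallmap{S}$ and \smallmap{S} satisfies {\bf (WE)}.

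The one point to watch is that \refprop{redlemmaWtypes} is stated with {\bf (PE)} as a hypothesis, whereas the corollary only assumes {\bf ($\Pi$E)}. This is not a genuine obstacle: since $(\ct{E}, \smallmap{S})$ is exact and \smallmap{S} is representable with {\bf ($\Pi$E)}, \refcoro{PiEimpliesPE} upgrades \smallmap{S} to satisfy {\bf (PE)}, and \refprop{presPEcov} then transfers {\bf (PE)} to $\smallmap{S}^{\mathrm{cov}}$, so that \refprop{redlemmaWtypes} does apply in the required setting. Apart from this bookkeeping, the corollary carries no independent technical content; all the real work has already been done in \reflemm{replbycollmap}, \refprop{redlemmaWtypes}, and \reftheo{transfWE}.
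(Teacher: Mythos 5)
Your first paragraph is exactly the paper's (implicit) proof: the corollary is stated without argument immediately after \reftheo{transfWE} precisely because, by \refrema{excfordisplaymaps}, an exact category with display maps satisfies $(\overline{\ct{E}}, \overline{\smallmap{S}}) = (\ct{E}, \smallmap{S}^{\rm cov})$, so the theorem specialises to the claim. Your self-contained unwinding and the bookkeeping via \refcoro{PiEimpliesPE} and \refprop{presPEcov} are also correct, but they add nothing beyond what the intended one-line instantiation already gives.
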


Again, we doubt whether a similar result for {\bf (WS)} holds.

\subsection{Infinity}

The following proposition is a triviality:
\begin{prop}{presnnocov}
Let $(\ct{E}, \smallmap{S})$ be a category with display maps \smallmap{S}. When \smallmap{S} satisfies {\bf (NE)} or {\bf (NS)}, then so does $\smallmap{S}^{\rm cov}$. 
\end{prop}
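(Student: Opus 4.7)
The plan is to observe that both parts reduce to facts already established, with no further work needed.

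For \textbf{(NE)}, note that the axiom concerns only the ambient category \ct{E}: it asserts the existence of a natural numbers object in \ct{E}, without any reference to the class of display maps. Since passing from \smallmap{S} to $\smallmap{S}^{\rm cov}$ does not modify \ct{E}, validity of \textbf{(NE)} for \smallmap{S} is automatically validity of \textbf{(NE)} for $\smallmap{S}^{\rm cov}$.

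For \textbf{(NS)}, I would invoke the inclusion $\smallmap{S} \subseteq \smallmap{S}^{\rm cov}$, which is immediate from \refprop{displtosmallmaps}: every $f \in \smallmap{S}$ is covered by itself via the identity covering square, hence lies in $\smallmap{S}^{\rm cov}$. Therefore $\NN \rTo 1 \in \smallmap{S}$ implies $\NN \rTo 1 \in \smallmap{S}^{\rm cov}$.

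There is no real obstacle here; the only thing to notice is that both axioms are monotone with respect to enlarging the class of small maps, while the underlying category and its natural numbers object (if it exists) are untouched by the passage to $\smallmap{S}^{\rm cov}$.
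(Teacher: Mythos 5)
Your proposal is correct and matches the paper's treatment: the paper labels this proposition a triviality and offers no proof, and your two observations (that \textbf{(NE)} concerns only the underlying category \ct{E}, and that \textbf{(NS)} follows from the inclusion $\smallmap{S} \subseteq \smallmap{S}^{\rm cov}$ guaranteed by \refprop{displtosmallmaps}) are exactly the intended justification.
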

The following, however, less so:
\begin{prop}{presnno}
Let \func{{\bf y}}{(\ct{E}, \smallmap{S})}{(\overline{\ct{E}}, \overline{\smallmap{S}})} be the exact completion of a category \ct{E} with a representable class of display maps \smallmap{S} satisfying {\bf ($\Pi$E)}. When \smallmap{S} satisfies {\bf (NE)} or {\bf (NS)}, then so does $\overline{\smallmap{S}}$.
\end{prop}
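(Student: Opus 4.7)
The plan is to reduce this to the W-type argument already used in \reftheo{transfWE}, exploiting the observation (noted in the text immediately before the two axioms are stated) that the natural numbers object is literally the W-type associated to the left coprojection $i \colon 1 \to 1+1$. Concretely, I will show that ${\bf y}\NN$ carries out the universal property of an NNO in $\overline{\ct{E}}$ by applying the characterisation of W-types from \reftheo{charthmWtypes} to ${\bf y}i$ in $\overline{\ct{E}}$. To invoke that theorem we need $\overline{\smallmap{S}}$ to satisfy {\bf (PE)}, which is provided by \refprop{presPiE} from the standing assumptions of representability and {\bf ($\Pi$E)} on $\smallmap{S}$.

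First I would check that $i \colon 1 \to 1+1$ is itself in $\smallmap{S}$: the codiagonal $\nabla \colon 1+1 \to 1$ lies in $\smallmap{S}$ by the Finiteness axiom {\bf (A4)}, and since $\nabla \circ i = \id_1 \in \smallmap{S}$, the Local Fullness axiom {\bf (L3)} forces $i \in \smallmap{S}$. Under {\bf (NE)}, the NNO $\NN$ in \ct{E} is the W-type $W_i$ for this $i$; its polynomial endofunctor is simply $P_i(X) = X + 1$. Since {\bf y} preserves finite coproducts and the terminal object, ${\bf y}$ commutes with $P_i$, so ${\bf y}\NN$ inherits a $P_{{\bf y}i}$-algebra structure whose structure map $1 + {\bf y}\NN \cong {\bf y}(1+\NN) \to {\bf y}\NN$ is still an isomorphism. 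Moreover, by item (3) of \reftheo{existexcompl}, {\bf y} induces a bijection between subobjects of $\NN$ and of ${\bf y}\NN$, and this bijection preserves the property of being a $P_i$-subalgebra because {\bf y} commutes with $P_i$ and with the algebra structure; hence ${\bf y}\NN$ has no proper $P_{{\bf y}i}$-subalgebras. Invoking the implication $(2) \Rightarrow (1)$ of \reftheo{charthmWtypes} in $\overline{\ct{E}}$ then shows ${\bf y}\NN$ is the W-type $W_{{\bf y}i}$, i.e.\ the NNO of $\overline{\ct{E}}$, establishing {\bf (NE)} for $\overline{\smallmap{S}}$.

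For the smallness clause {\bf (NS)}, the additional hypothesis is that $\NN \to 1$ belongs to $\smallmap{S}$; since {\bf y} is a morphism of categories with display maps it sends this arrow to ${\bf y}\NN \to 1$ in $\overline{\smallmap{S}}$, which is what is required. No genuine obstacle appears here; the only point that requires care is the need for {\bf (PE)} in $\overline{\ct{E}}$ to legitimise the use of \reftheo{charthmWtypes}, and that is exactly what \refprop{presPiE} supplies from the hypotheses of representability and {\bf ($\Pi$E)}.
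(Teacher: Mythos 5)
Your proof is correct and follows essentially the same route as the paper: reduce {\bf (NE)} to the fact that $\NN$ is the W-type for $i\colon 1 \rTo 1+1$, then argue as in \reftheo{transfWE} that ${\bf y}\NN$ has no proper $P_{{\bf y}i}$-subalgebras because {\bf y} is bijective on subobjects, and invoke \reftheo{charthmWtypes}. Your extra details (checking $i \in \smallmap{S}$ via {\bf (A4)} and {\bf (L3)}, and the explicit treatment of {\bf (NS)}) are correct and merely make explicit what the paper leaves implicit.
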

\begin{proof}
The statement follows immediately from the fact that {\bf y} preserves the natural numbers object $\NN$, whenever it exists in \ct{E}. But as $\NN$ is the W-type associated to the left sum inclusion \func{i}{1}{1 + 1}, this can be shown as in \reftheo{transfWE}: we only need to show that ${\bf y}\NN$ has no proper $P_{i}$-subalgebras (by \reftheo{charthmWtypes}), which follows from the fact that {\bf y} is bijective on subobjects.
\end{proof}

\subsection{Fullness}

In this subsection we discuss the stability properties of the Fullness axiom, which are rather good. To show this, we first prove two lemmas, the second of which is also useful in other contexts.

\begin{lemm}{usefulforfull}
Let $(\ct{E}, \smallmap{S})$ be a category with display maps. Any composable pair of arrows in $\ct{E}$ of the form
\diag{ C \ar@{ >->}[r]^m & B \ar[r]^f & A}
with $m \in \smallmap{S}^{\rm cov}$ a mono and $f \in \smallmap{S}$, fits into a diagram of the form
\diag{ Z \ar@{ >->}[d]_n \ar@{->>}[r] & C \ar@{ >->}[d]^m \\
Y \ar[d]_g \ar@{->>}[r] & B \ar[d]^f \\
X \ar@{->>}[r]_p & A,}
where both squares are pullbacks, the horizontal arrows are covers (as indicated), and both $n$ and $g$ belong to \smallmap{S}.
\end{lemm}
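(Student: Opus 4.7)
The approach is to unfold $m \in \smallmap{S}^{\rm cov}$ to obtain a covering witness in $\smallmap{S}$, normalise it to be a mono using axiom \textbf{(A10)} for display maps, then apply Collection to $f$ in order to build the lower pullback square, and finally to verify smallness of the upper mono $n$ using Local Fullness \textbf{(L3)} together with \textbf{(A10)} again.

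Concretely, I would first pick, by definition of $\smallmap{S}^{\rm cov}$, a covering square over $m$ consisting of a map $m'\colon E\to D$ in $\smallmap{S}$, a cover $E\twoheadrightarrow C$ and a cover $p_0\colon D\twoheadrightarrow B$. Factoring $m'$ as a cover $E\twoheadrightarrow E'$ followed by a mono $E'\hookrightarrow D$, the image $E'$ coincides with the pullback $D\times_B C$ (the square being covering), and $E'\hookrightarrow D$ lies in $\smallmap{S}$ by \textbf{(A10)}. So after replacing $E$ by $E'$, I may assume that $m'$ is a mono in $\smallmap{S}$ and that the upper square is a genuine pullback.

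Next, I apply Collection \textbf{(A7)} to the cover $p_0\colon D\twoheadrightarrow B$ and the small map $f$. This yields a cover $p\colon X\twoheadrightarrow A$, a map $g_0\colon D'\to X$ in $\smallmap{S}$, and a map $D'\to D$ such that the outer square with sides $g_0$, $f$, $p$, $D'\to B$ is covering. Set $Y:=B\times_A X$ with $g\colon Y\to X$ the projection (so $g\in\smallmap{S}$ by pullback stability, and $Y\twoheadrightarrow B$ is a cover), and $Z:=C\times_A X\cong C\times_B Y$ with $n\colon Z\to Y$ the pullback of $m$ (so $n$ is mono and $Z\twoheadrightarrow C$ is a cover). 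Both squares on the right are pullbacks by construction, and the three horizontal maps are covers as required.

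The crux is to show that $n\in\smallmap{S}$. Put $F:=D'\times_D E=D'\times_B C$; then $F\hookrightarrow D'$ is the pullback of the small mono $m'$, hence itself a mono in $\smallmap{S}$. The canonical comparison map $D'\twoheadrightarrow Y$ is a cover (the square produced by Collection being covering), and it lies in $\smallmap{S}$ by \textbf{(L3)} applied to the triangle over $X$ with slant edge $g_0$ and base $g$ (both in $\smallmap{S}$). Hence the composite $F\to D'\to Y$ is in $\smallmap{S}$. On the other hand $F\to Y$ factors as $F\twoheadrightarrow Z\hookrightarrow Y$, where $F\twoheadrightarrow Z$ is the pullback of $D'\twoheadrightarrow Y$ along $n$ and therefore itself a cover. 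Applying \textbf{(A10)} to this factorisation of the small map $F\to Y$, I conclude that $n\in\smallmap{S}$, as required. The main obstacle is the bookkeeping: \textbf{(A10)} is used twice (first to normalise the covering witness $m'$ to a mono, and then to extract smallness of $n$), and \textbf{(L3)} must be invoked to certify that the comparison $D'\to Y$ coming out of Collection is itself small---once $X$ is fixed by Collection, everything else is forced by universal properties of pullbacks.
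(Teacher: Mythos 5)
Your proof is correct and takes essentially the same route as the paper's: normalise the covering witness of $m$ to a small mono using \textbf{(A10)}, apply Collection to $f$ and the resulting cover of $B$ to produce $p$, pull everything back along $p$, certify the comparison map into $Y$ as small via \textbf{(L3)}, and extract $n \in \smallmap{S}$ from the cover--mono factorisation of the small composite $F \to Y$ by a second use of \textbf{(A10)}. There are no gaps; the only differences from the paper are notational.
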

\begin{proof}
Using the definition of $\smallmap{S}^{\rm cov}$, we know that $m$ is covered by a map in \smallmap{S}. Using axiom {\bf (A10)}, we may actually assume that $m$ is covered via a pullback square by a mono $m' \in \smallmap{S}$. Then using Collection for \smallmap{S}, we obtain a diagram of the form
\diag{ Z' \ar@{ >->}[d]_{n'} \ar[r] & C' \ar@{ >->}[d]^{m'} \ar@{->>}[r] & C \ar@{ >->}[d]^m \\
Y' \ar[d]_{g'} \ar[r] & B' \ar@{->>}[r] & B \ar[d]^f \\
X \ar@{->>}[rr]_p & & A,}
where the top squares are pullbacks and the rectangle below is covering, and both $n'$ and $g'$ belong to \smallmap{S}. By pulling back $m$ and $f$ along $p$, we obtain a diagram as follows:
\diag{ Z' \ar@{ >->}[d]_{n'} \ar@{->>}[r] & Z \ar@{ >->}[d]^{p^*m=n} \ar@{->>}[r] & C \ar@{ >->}[d]^m \\
Y' \ar[dr]_{g'} \ar@{->>}[r]^q & Y \ar@{->>}[r] \ar[d]^{p^*f = g} & B \ar[d]^f \\
& X \ar@{->>}[r]_p & A,}
where the squares are all pullbacks. Then $g \in \smallmap{S}$ by pullback stability, and $q \in \smallmap{S}$ by local fullness (or \reflemm{composlemma}). Since $n' \in \smallmap{S}$, also $qn' \in \smallmap{S}$ by closure under composition. Then $n \in \smallmap{S}$ by axiom {\bf (A10)}.
\end{proof}

\begin{lemm}{redlemmforfull}
Let $(\ct{E}, \smallmap{S})$ be a category with a class of display maps. Suppose we are given in \ct{E} a diagram of the form
\diag{ B_0 \ar@{->>}[r] \ar[d]_{\psi} & B \ar[d]^{\phi} \\
A_0 \ar@{->>}[r] \ar[d]_i & A \ar[d]^j \\
X_0 \ar@{->>}[r]_p & X, }
in which both squares are covering and $\psi$ and $i$ belong to \smallmap{S} and $\phi$ and $j$ belong to $\smallmap{S}^{\rm cov}$. If a generic $\smallmap{S}$-displayed \emph{mvs} for $\psi$ exists, then also a generic $\smallmap{S}^{\rm cov}$-displayed \emph{mvs} for $\phi$ exists.
\end{lemm}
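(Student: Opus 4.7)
The plan is to transfer the generic $\smallmap{S}$-displayed mvs from $\psi$ to $\phi$ by pushing forward along the top cover $B_0 \twoheadrightarrow B$, and to establish the generic property by \emph{lifting} any given $\smallmap{S}^{\rm cov}$-displayed mvs of $\phi$ back to an $\smallmap{S}$-displayed mvs of $\psi$ over a suitable cover of its base, where the genericity of the mvs for $\psi$ can be invoked.

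Let $(q\colon X_0' \twoheadrightarrow X_0,\ y\colon Y \to X_0' \in \smallmap{S},\ P)$ be the generic data supplied by the hypothesis. I propose to take $X' := X_0'$ with the composite cover $pq\colon X' \twoheadrightarrow X$, $y' := y$ (which lies in $\smallmap{S} \subseteq \smallmap{S}^{\rm cov}$), and $P'$ to be the image of $P$ in $B_Y$ under the cover $(B_0)_Y \twoheadrightarrow B_Y$ obtained by pulling back the top covering square along $Y \to X$. Then $P' \to A_Y$ is a cover (as a factor of the composite cover $P \to (A_0)_Y \to A_Y$) and $P' \to Y \in \smallmap{S}^{\rm cov}$ (being covered by $P \to Y \in \smallmap{S}$), so $P'$ is an $\smallmap{S}^{\rm cov}$-displayed mvs of $\phi$ over $Y$.

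For the generic property, let $z\colon Z \to X'$ be arbitrary and let $Q \subseteq B_Z$ be any $\smallmap{S}^{\rm cov}$-displayed mvs of $\phi$ over $Z$. The key step is to build, over a cover $B^* \twoheadrightarrow Z$, an $\smallmap{S}$-displayed mvs $S$ of $\psi$ whose image in $B$ is contained in the pullback of $Q$. Set $R := Q \times_{A_Z} (A_0)_Z$, so that both projections are covers; then $R \subseteq B_Z \times_{A_Z} (A_0)_Z$, and pulling back the cover $(B_0)_Z \twoheadrightarrow B_Z \times_{A_Z} (A_0)_Z$ (from the top covering square) yields $\bar R \subseteq (B_0)_Z$ with $\bar R \twoheadrightarrow R$. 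Consequently, $\bar R \to (A_0)_Z$ (via $\psi$) is a cover and the image of $\bar R$ in $B_Z$ is contained in $Q$. Since $\bar R \to Z$ is only in $\smallmap{S}^{\rm cov}$, I apply Collection (A7) for $\smallmap{S}$ to the cover $\bar R \twoheadrightarrow (A_0)_Z$ and the $\smallmap{S}$-small map $(A_0)_Z \to Z$ (a pullback of $i$), obtaining a cover $B^* \twoheadrightarrow Z$ together with $R^* \to B^*$ in $\smallmap{S}$ and a cover $R^* \twoheadrightarrow B^* \times_Z (A_0)_Z$ compatible with $R^* \to \bar R$. Letting $S$ be the image of the induced map $R^* \to (B_0)_{B^*}$, one verifies using the Quotients axiom (A6) that $S \to B^* \in \smallmap{S}$, that $S \to (A_0)_{B^*}$ is a cover, and that the image of $S$ in $B_{B^*}$ sits inside $Q|_{B^*}$.

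Finally, invoking the generic property of $P$ for $\psi$ with the $\smallmap{S}$-displayed mvs $S$ over $B^*$ and the composite $B^* \to Z \to X'$ produces $k\colon U \to Y$ and a cover $l'\colon U \twoheadrightarrow B^*$ with $yk = z\circ(B^* \to Z)\circ l'$ and $k^*P \leq l'^*S$ in the poset of mvs's of $\psi$ over $U$. Setting $l := (B^* \to Z)\circ l'\colon U \twoheadrightarrow Z$, so $yk = zl$, and pushing both sides of $k^*P \leq l'^*S$ forward along $(B_0)_U \to B_U$ (images commute with the pullback functors $k^*$ and $l'^*$ by regularity) gives $k^*P' \leq l'^*(\text{im}(S \to B_{B^*})) \leq l'^*(Q|_{B^*}) = l^*Q$, as required. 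The main technical obstacle is the construction of $S$: securing $\smallmap{S}$-smallness over $B^*$, the mvs-of-$\psi$ property, and the image containment simultaneously requires combining the covering square pulled back from the top of the diagram with a careful application of Collection for $\smallmap{S}$.
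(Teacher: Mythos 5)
Your proof is correct and takes essentially the same route as the paper's: push $P$ forward along the top cover to get $P'$, pull a given $Q$ back along that cover to get an mvs of $\psi$, repair its displayedness over a further cover of $Z$ by an application of Collection for \smallmap{S} (the paper isolates this repair step as \reflemm{usefulforfull}, which yields a pullback of the lifted mvs where you instead take an image yielding a sub-mvs of it --- an immaterial difference, since the final computation only uses containment of the images in $B$), invoke the genericity of $P$, and push the resulting inequality back down along $B_0 \rTo B$. The one slip is your appeal to the Quotients axiom {\bf (A6)}, which a class of \emph{display} maps need not satisfy; the step survives because $S$ is by construction a cover of $R^*$ followed by a mono into $(B_0)_{B^*}$, and $R^* \rTo (B_0)_{B^*}$ lies in \smallmap{S} by \reflemm{composlemma}, so the Images axiom {\bf (A10)} together with closure under composition gives $S \rTo B^* \in \smallmap{S}$.
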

\begin{proof}
By pulling back $\phi$ along $p$, we obtain over $X_0$ the following covering square:
\diag{ B_0 \ar@{->>}[r]^{\delta} \ar[d]_{\psi} & p^* B \ar[d]^{p^*\phi} \\
A_0 \ar@{->>}[r] &  p^* A. }
By \reflemm{composlemma}, all arrows in this square belong to $\smallmap{S}^{\rm cov}$.

Using Fullness for $\psi$, we find a cover \func{e}{X'}{X_0} and a map $\func{s}{Y}{X'} \in \smallmap{S}$, together with a generic \smallmap{S}-displayed \emph{mvs} $P$ for $\psi$ over $Y$. Writing $\kappa$ for the composite $\func{es}{Y}{X_0}$ and $\alpha$ for \func{p \kappa}{Y}{X}, we obtain the following covering square over $Y$:
\diag{ \kappa^* B_0 \ar@{->>}[r]^{\kappa^*\delta} \ar[d]_{ \kappa^* \psi} & \alpha^* B \ar[d]^{\alpha^* \phi} \\
\kappa^* A_0 \ar@{->>}[r] & \alpha^* A. }
All the arrows in this square belong to $\smallmap{S}^{\rm cov}$, so the \smallmap{S}-displayed \emph{mvs} $P$ of $\psi$ over $Y$ induces a $\smallmap{S}^{\rm cov}$-displayed \emph{mvs} $\overline{P}$ of $\phi$ over $Y$ by $\overline{P} = (\kappa^* \delta)_* P$. We claim it is generic.

So let $\func{t}{Z}{X'}$ be any map and $\overline{Q}$ be an $\smallmap{S}^{\rm cov}$-displayed \emph{mvs} of $\phi$ over $Z$. Writing $\lambda = et$ and $\beta = p \lambda$, we obtain a diagram over $Z$ as follows:
\diag{ Q' \ar@{ >->}[d] \ar[r] & \overline{Q} \ar@{>->}[d] \\
\lambda^* B_0 \ar[r]^{\lambda^*\delta} \ar[d]_{\lambda^*\psi} & \beta^* B \ar[d]^{\beta^* \phi} \\
\lambda^* A_0 \ar[r]  & \beta^* A, }
with $Q' = (\lambda^* \delta)^* \overline{Q}$. Because all arrows in this diagram belong to $\smallmap{S}^{\rm cov}$, and $\overline{Q}$ is an $\smallmap{S}^{\rm cov}$-displayed \emph{mvs} for $\phi$ over $Z$, the subobject $Q'$ is an $\smallmap{S}^{\rm cov}$-displayed \emph{mvs} for $\psi$ over $Z$.

Notice that we have obtained a diagram of the form
\diag{ Q' \ar@{ >->}[r] & \lambda^* B \ar[rr]^{\lambda^*(i\psi)} &  & Z, }
where the first map belongs to $\smallmap{S}^{\rm cov}$ and the second belongs to \smallmap{S}. So we can use the previous lemma to obtain a cover \func{v}{Z'}{Z} such that $Q = v^* Q'$ is an \smallmap{S}-displayed \emph{mvs} of $\psi$ over $Z'$.

By genericity of $P$, this means that we find a map \func{y}{U}{Y} and a cover \func{q}{U}{Z'} with $sy = tvq$ such that $y^* P \leq q^* Q$ as displayed \emph{mvs}s of $\psi$ over $U$. Now 
\[ \kappa y = esy = etvq = \lambda v q, \]
and therefore also 
\[ ((\kappa y)^* \delta)_* y^*P \leq ((\lambda v q)^* \delta)_* q^* Q = ((\lambda v q)^* \delta)_* (vq)^* Q' \]
as displayed \emph{mvs}s of $\phi$ over $U$. But 
\[ ((\kappa y)^* \delta)_* y^*P = y^* (\kappa^* \delta)_* P = y^* \overline{P}, \]
and 
\[ ((\lambda v q)^* \delta)_* (vq)^* Q' = (vq)^* (\lambda^* \delta)_* Q' = (vq)^* (\lambda^* \delta)_* (\lambda^* \delta)^* \overline{Q} \leq (vq)^* \overline{Q}. \]
This completes the proof.
\end{proof}
\begin{prop}{presFcov}
Let $(\ct{E}, \smallmap{S})$ be a category with display maps \smallmap{S}. When \smallmap{S} satisfies {\bf (F)}, then so does $\smallmap{S}^{\rm cov}$. 
\end{prop}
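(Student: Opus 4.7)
The plan is to reduce Fullness for $\smallmap{S}^{\rm cov}$ to Fullness for \smallmap{S} by way of \reflemm{redlemmforfull}, which was stated in exactly the form needed. So the argument is essentially a three-step pipeline: build the required covering diagram with \reflemm{1stuseofcoll}, invoke {\bf (F)} for \smallmap{S} on the top layer, then push the generic \emph{mvs} down to $\phi$ via \reflemm{redlemmforfull}.

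In detail: suppose we are given $\func{\phi}{B}{A} \in \smallmap{S}^{\rm cov}$ over $X$ with $\func{j}{A}{X} \in \smallmap{S}^{\rm cov}$. Applying \reflemm{1stuseofcoll} to the composable pair $(j, \phi)$, we obtain a diagram
\diag{ B_0 \ar@{->>}[r] \ar[d]_{\psi} & B \ar[d]^{\phi} \\
A_0 \ar@{->>}[r] \ar[d]_i & A \ar[d]^j \\
X_0 \ar@{->>}[r]_p & X, }
in which both squares are covering and $\psi, i \in \smallmap{S}$. This is precisely the hypothesis of \reflemm{redlemmforfull}.

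Since $\psi \in \smallmap{S}$ sits above the small map $\func{i}{A_0}{X_0}$, the axiom {\bf (F)} for \smallmap{S} produces a cover $\func{q}{X_0'}{X_0}$, a map $\func{y}{Y}{X_0'}$ in \smallmap{S}, and a generic \smallmap{S}-displayed \emph{mvs} $P$ for $\psi$ over $Y$. Plugging this data into \reflemm{redlemmforfull} yields a generic $\smallmap{S}^{\rm cov}$-displayed \emph{mvs} for $\phi$, the accompanying cover of $X$ being the composite $X_0' \rTo X_0 \rTo X$. This verifies {\bf (F)} for $\smallmap{S}^{\rm cov}$.

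No real obstacle is anticipated, since all the genuine work (the construction of the pushed-down \emph{mvs} $\overline{P}$ and the verification of its universal property against arbitrary $\smallmap{S}^{\rm cov}$-displayed test data) was already carried out in \reflemm{redlemmforfull}; this is the reason that lemma was isolated in the form stated.
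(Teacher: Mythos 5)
Your argument is correct and is exactly the paper's proof, which reads in full ``Immediate from the previous lemma using \reflemm{1stuseofcoll}'': you apply \reflemm{1stuseofcoll} to the composable pair in $\smallmap{S}^{\rm cov}$ to produce the covering diagram, note that {\bf (F)} for \smallmap{S} applies to $\psi$ over $i \in \smallmap{S}$, and then hand everything to \reflemm{redlemmforfull}. You have merely made explicit the steps the paper leaves implicit.
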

\begin{proof}
Immediate from the previous lemma using \reflemm{1stuseofcoll}.
\end{proof}

Now the proof of the main result of this subsection should be straightforward:
\begin{prop}{presofFullness}
Let \func{{\bf y}}{(\ct{E}, \smallmap{S})}{(\overline{\ct{E}}, \overline{\smallmap{S}})} be the exact completion of a category with a class of display maps \smallmap{S}. When \smallmap{S} satisfies {\bf (F)}, then so does $\overline{\smallmap{S}}$.
\end{prop}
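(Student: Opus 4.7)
The plan is to mirror the proof of \refprop{presFcov}, with \reflemm{2nduseofcoll} taking over the role played there by \reflemm{1stuseofcoll}, and proving along the way an analogue of \reflemm{redlemmforfull} that transfers Fullness across the exact completion rather than across the covered-map completion. By \refprop{presFcov} I may first assume that $\smallmap{S}$ is already a class of small maps, so that $(\overline{\ct{E}},\overline{\smallmap{S}})$ is the exact completion in the sense of \reftheo{existexcompl}.

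Let $\func{\phi}{B}{A}$ be in $\overline{\smallmap{S}}$ over some $X\in\overline{\ct{E}}$ with $A\rTo X$ also in $\overline{\smallmap{S}}$. Applying \reflemm{2nduseofcoll} to the composable pair $B\rTo A\rTo X$ produces a diagram
\diag{ {\bf y}B_0 \ar@{->>}[r] \ar[d]_{{\bf y}\psi} & B \ar[d]^{\phi} \\
{\bf y}A_0 \ar@{->>}[r] \ar[d]_{{\bf y}i} & A \ar[d] \\
{\bf y}X_0 \ar@{->>}[r]_p & X }
with $\psi,i\in\smallmap{S}$ and both squares covering. Applying {\bf (F)} for $\smallmap{S}$ inside \ct{E} to $\psi$ over $X_0$ yields a cover $\func{q}{X_0'}{X_0}$, a map $\func{y}{Y_0}{X_0'}$ in $\smallmap{S}$ and a generic $\smallmap{S}$-displayed \emph{mvs} $P$ of $\psi$ over $Y_0$. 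The candidate generic $\overline{\smallmap{S}}$-displayed \emph{mvs} $\overline{P}$ of $\phi$ over ${\bf y}Y_0$ is defined as the image of ${\bf y}P$ along the cover obtained by pulling the upper covering square back to ${\bf y}Y_0$ via ${\bf y}(qy)$.

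The main obstacle is the verification of genericity. Given any test $\overline{\smallmap{S}}$-displayed \emph{mvs} $\overline{Q}$ of $\phi$ over some $\func{z}{Z}{X}$, I need to produce a cover $U\rTo Z$ factoring through ${\bf y}Y_0$ along which the pullback of $\overline{P}$ is contained in that of $\overline{Q}$. The plan is: first cover $Z$ by an object of the form $\func{r}{{\bf y}Z_0}{Z}$ (using that ${\bf y}$ is covering); pull $\overline{Q}$ back to ${\bf y}Z_0$ and then further along the cover coming from ${\bf y}B_0$, producing an $\overline{\smallmap{S}}$-displayed \emph{mvs} of a pullback of ${\bf y}\psi$; using the bijection on subobjects from \reflemm{exreglemma} together with an analogue of \reflemm{usefulforfull} for $\overline{\smallmap{S}}$-bounded monos composed with ${\bf y}$-images of $\smallmap{S}$-maps --- whose proof goes through verbatim, since it invokes only Collection, the image axiom {\bf (A10)} and pullback stability, all available in $\overline{\ct{E}}$ --- this converts, after a further cover, into a genuine $\smallmap{S}$-displayed \emph{mvs} $Q$ of $\psi$ inside \ct{E}. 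Applying the genericity of $P$ in \ct{E} and transporting the resulting inequality back up through ${\bf y}$, which by \reftheo{existexcompl} preserves pullbacks, covers and images, yields the desired comparison.
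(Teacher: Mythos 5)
Your proposal is correct and follows essentially the same route as the paper: reduce via \reflemm{2nduseofcoll} (and the argument of \reflemm{redlemmforfull}) to maps coming from \ct{E}, take the generic \emph{mvs} supplied by {\bf (F)} in \ct{E}, and verify genericity by covering an arbitrary test object $Z$ by some ${\bf y}Z_0$, descending the test \emph{mvs} to \ct{E}, and invoking genericity there. The only presentational difference is that the paper does not need an ``analogue'' of \reflemm{usefulforfull} in $\overline{\ct{E}}$: it observes that, by fullness and bijectivity on subobjects, the composable pair $p^*Q \rightarrowtail \lambda^*B \rTo Z_0$ already lives in \ct{E} with the mono in $\smallmap{S}^{\rm cov}$, so \reflemm{usefulforfull} applies there verbatim --- which is what your appeal to \reflemm{exreglemma} amounts to (and your preliminary reduction via \refprop{presFcov} makes this step even easier, since then $\smallmap{S}^{\rm cov}=\smallmap{S}$).
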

\begin{proof}
Once again, we systemically suppress occurences of ${\bf y}$.

In view of \reflemm{2nduseofcoll} and \reflemm{redlemmforfull}, it suffices to show that a generic $\overline{\smallmap{S}}$-displayed \emph{mvs} exists in $\overline{\ct{E}}$ for those $\func{\phi}{B}{A} \in \smallmap{S}$ with $A \rTo X \in \smallmap{S}$. Of course, because Fullness holds for $\phi$ in \ct{E}, there is a cover \func{e}{X'}{X} and a map $\func{s}{Y}{X'} \in \smallmap{S}$, together with an \smallmap{S}-displayed \emph{mvs} $P$ for $\phi$ which is generic in \ct{E}. We claim it is also a generic $\overline{\smallmap{S}}$-displayed \emph{mvs} for $\phi$ in $\overline{\ct{E}}$.

So let $\func{t}{Z}{X'}$ be any map and $Q$ be an $\overline{\smallmap{S}}$-displayed \emph{mvs} of $\phi$ over $Z$. As ${\bf y}$ is covering, we obtain a cover \func{p}{Z_0}{Z} with $Z_0 \in \ct{E}$. Writing $\lambda = etq$, we obtain the following diagram in \ct{E} (!):
\diag{ p^*Q \ar@{ >->}[r] & \lambda^* B \ar[rr]^{\lambda^*(i\phi)} &  & Z_0, }
where the first arrow belongs to $\smallmap{S}^{\rm cov}$ and the second arrow to \smallmap{S}. Then, using \reflemm{usefulforfull}, we find a cover \func{q}{Z_1}{Z_0} in \ct{E} such that $(pq)^* Q$ is an \smallmap{S}-displayed \emph{mvs} for $\phi$ over $Z_1$.

Using the genericity of $P$, this means there exist a map \func{y}{U}{Y} and a cover \func{r}{U}{Z_1} with $sy = tpqr$ such that $y^* P \leq r^* (pq)^* Q = (pqr)^* Q$ as \smallmap{S}-displayed, and therefore also $\overline{\smallmap{S}}$-displayed, \emph{mvs}s of $\phi$ over $U$. This completes the proof.
\end{proof}

\part*{A categorical semantics for set theory}

In this final part of the paper we will explain how categories with small maps provide a semantics for set theory. In Section 7, we establish its soundness, and in Section 8 its completeness.

\section{Soundness}

Throughout this section $(\ct{E}, \smallmap{S})$ will be a bounded exact category with a representable class of small maps \smallmap{S} satisfying {\bf ($\Pi$E)} and {\bf (WE)}. We will refer to this as a \emph{predicative category with small maps}.\footnote{Compare the notion of a $\Pi W$-pretopos or a ``predicative topos'' in \cite{moerdijkpalmgren00} and \cite{berg05}.} %Some of the results contained in this section could also be obtained in a slightly weaker setting, but as far as the authors can see this is only of marginal interest.

It follows from \refcoro{PiEimpliesPE} that {\bf (PE)} holds in \ct{E} as well, so it makes sense to consider (indexed) \spower-algebras in \ct{E} (see \refdefi{initalg} and \reflemm{PEstableunderslicing}). In particular, it makes sense to ask whether the indexed initial \spower-algebra exists in \ct{E}. For the moment we will simply assume that it does and denote it by $V$.

Since $V$, as an initial algebra, is a fixed point of \spower, it comes equipped with with two mutually inverse maps:
\diag{ \spower V \ar@/^/[rr]^{\rm Int} & & V \ar@/^/[ll]^{\rm Ext}. }
In the internal logic of \ct{E}, we can therefore define a binary relation $\epsilon$ on $V$, as follows:
\[ x \epsilon y \Leftrightarrow x \in {\rm Ext} (y). \]
In this way, we obtain a structure $(V, \epsilon)$ in the language of set theory, and the next result shows that it models a rudimentary set theory {\bf RST} (see Appendix A for its axioms).
\begin{prop}{regcompl}
Assume the indexed initial \spower-algebra $V$ exists, and $\epsilon$ is the binary predicate defined on it as above. Then all axioms of {\bf RST} are satisfied in the structure $(V, \epsilon)$. 
\end{prop}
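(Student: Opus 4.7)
The plan is to verify each axiom of {\bf RST} in turn, working in the internal logic of \ct{E} and repeatedly exploiting the fact that ${\rm Int}$ and ${\rm Ext}$ are mutually inverse isomorphisms between $\spower V$ and $V$. The key ingredient is the universal property of $\spower V$ as the classifier for bounded (i.e.\ $\smallmap{S}$-displayed) subobjects of $V$, combined with the bounded separation principle of \refrema{boundedsepforlocHeytcat}: any subobject of $V$ defined from bounded predicates using only quantifications along small maps is again bounded, hence classified by an element of $\spower V$, and so yields an element of $V$ via ${\rm Int}$. Under this perspective, each axiom of {\bf RST} reduces to exhibiting a suitable bounded subobject of $V$ and verifying its $\epsilon$-extension through ${\rm Ext}\circ{\rm Int} = \id$.

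Extensionality is immediate: if $\forall z\,(z \epsilon x \leftrightarrow z \epsilon y)$, then ${\rm Ext}(x) = {\rm Ext}(y)$ as subobjects of $V$, so $x = {\rm Int}\,{\rm Ext}(x) = {\rm Int}\,{\rm Ext}(y) = y$. The empty set is ${\rm Int}(\emptyset)$. For Pairing, the subobject $\{z \in V \mid z = a \lor z = b\} \subseteq V$ is bounded because all diagonals are small, so ${\rm Int}$ of it gives the required pair. For Union, given $a \in V$, the subobject
\[ \{ x \in V \mid \exists y\, (x \epsilon y \land y \epsilon a) \} \subseteq V \]
rewrites as $\{ x \in V \mid \exists y \in {\rm Ext}(a)\,(x \in {\rm Ext}(y)) \}$; the existential is along the small map ${\rm Ext}(a) \rTo 1$ applied to the bounded predicate $x \in {\rm Ext}(y)$, so the subobject is bounded, and ${\rm Int}$ of it is $\bigcup a$. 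Bounded Separation is similar: for $a \in V$ and a bounded formula $\phi(x)$, the subobject $\{x \in V \mid x \epsilon a \land \phi(x)\}$ is the intersection of the bounded subobject ${\rm Ext}(a)$ with a bounded subobject of $V$, hence bounded. For any $\epsilon$-induction scheme in {\bf RST}, one shows that the subobject of $V$ on which the inductive hypothesis holds carries a $\spower$-subalgebra structure, and then invokes the fact (noted in the discussion preceding \refdefi{initalg}) that the initial algebra $V$ has no proper subalgebras.

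The main obstacle is, for each individual axiom, to certify that the subobject of $V$ constructed is genuinely $\smallmap{S}$-bounded; this is what licenses the application of ${\rm Int}$, and it is where the axioms for a class of small maps (in particular {\bf (A8)} and the smallness of all diagonals, used via \refrema{boundedsepforlocHeytcat}) do the real work. Once boundedness is in hand, the verification that the resulting element of $V$ has the correct $\epsilon$-extension is a routine unfolding of definitions, since ${\rm Ext}$ simply reads off the classifying subobject and ${\rm Ext}\circ{\rm Int}$ is the identity on $\spower V$.
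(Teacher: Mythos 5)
Your proposal omits Strong collection, and this is not an oversight that your general strategy can absorb: Strong collection is precisely the one axiom of {\bf RST} that does \emph{not} reduce to ``exhibit a bounded subobject of $V$ and apply ${\rm Int}$''. Given $\forall x \epsilon a\, \exists y\, \phi(x,y)$ for an \emph{arbitrary} formula $\phi$, the class $\{ y \in V : \exists x \epsilon a\, \phi(x,y) \}$ has no reason to be bounded, let alone small, so no amount of bounded separation produces the witnessing set $b$. The paper handles this by forming the cover $p_1 : E = \{(x,y) \in V^2 : \phi(x,y) \land x \epsilon a\} \rTo {\rm Ext}(a)$ and invoking the Collection axiom {\bf (A7)} for \smallmap{S} to obtain a small object $S$ with a cover \func{q}{S}{{\rm Ext}(a)} factoring through $p_1$; the image of the second projection $S \rTo V$ is then a small subobject of $V$ whose name is the required $b$. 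Without an appeal to {\bf (A7)} the proof is incomplete.

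A second, more local problem is the conflation of \emph{bounded} with \emph{small} subobjects of $V$. The elements of $\spower V$ correspond to subobjects $A \subseteq V$ for which $A \rTo 1$ is small, not to those for which the mono $A \rTo V$ is small, and \refrema{boundedsepforlocHeytcat} only upgrades boundedness to smallness when the ambient object is itself small. This is harmless for Bounded separation, where you separate inside the small object ${\rm Ext}(a)$, but for Pairing and Union your subobjects are carved out of $V$ itself, which is not small, so boundedness alone does not license applying ${\rm Int}$. The repair is easy: the pair is the image of $\func{[a,b]}{1+1}{V}$ and the union is the image of $\sum_{y \in {\rm Ext}(a)} {\rm Ext}(y) \rTo V$, both small by {\bf (A4)}, closure under composition and Quotients; this is essentially how the paper argues (for Union it instead applies the monad multiplication $\mu_V$ to $(\spower {\rm Ext})({\rm Ext}(a))$, which amounts to the same thing). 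The remainder of your verification --- Extensionality, Empty set, Bounded separation, and Set induction via the absence of proper $\spower$-subalgebras of the (indexed) initial algebra --- agrees with the paper's proof.
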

\begin{proof} By the universal property of power objects, there is a correspondence between small subobjects $A \subseteq V$ (i.e., subobjects $A$ of $V$ such that $A \rTo 1$ is small) and elements of $\spower (V)$. Therefore we can call $y \in V$ the \emph{name} of the small subobject $A \subseteq V$, in case ${\rm Ext}(y)$ is the corresponding element in $\spower(V)$. 

We verify the validity of the axioms of {\bf RST} by making extensive use of the internal language of the positive Heyting category \ct{E}. 

Extensionality holds because two small subobjects ${\rm Ext}(x)$ and ${\rm Ext}(y)$ of $V$ are equal if and only if, in the internal language of \ct{E},   $z\in {\rm }{\rm Ext}(x)\leftrightarrow z\in {\rm Ext}(y)$. The least subobject $0\subseteq V$ is small, and its name \func{\emptyset}{1}{V} models the empty set. The pairing of two elements $x$ and $y$ represented by two arrows $1\rTo V$, is given by ${\rm Int}(l)$, where $l$ is the name of the (small) image of their copairing \func{[x,y]}{1+1}{V}. The union of the sets contained in a set $x$ is interpreted by applying the multiplication of the monad $\spower$ to $(\spower {\rm Ext})({\rm Ext}(x))$:
\diag{ {\rm Ext}(x) \in \spower V \ar[rr]^{\spower {\rm Ext}} & & \spower \spower V \ar[r]^{\mu_V} & \spower V \ar[r]^{\rm Int} & V.} 

To show the validity of Bounded separation, we need to observe that $=$ and $\epsilon$ are bounded relations on $V$. So for any bounded formula $\phi$ in the language of set theory and $a \in V$, the subobject $S$ of ${\rm Ext}(a)$ defined by
\[ S = \{ y \in {\rm Ext}(a) \, : \, V \models \phi(y) \} \]
is bounded, and hence small. The name $x$ of $S$ now satisfies $\ \forall y \, ( \, y \epsilon x \leftrightarrow y \epsilon a \land \phi(y) \, ) $.

To show the validity of Strong collection, assume $\forall x \epsilon a \exists y \phi(x, y)$ holds. Then we have a cover $p_1$
\diag{ E = \{ (x, y) \in V^2 \, : \, \, V \models \phi(x, y) \land x \epsilon a \} \ar@{ ->>}[r] & {\rm Ext}(a), }
given by the first projection. Since ${\rm Ext}(a)$ is small, there is a small object $S$ together with a cover \func{q}{S}{{\rm Ext}(a)} factoring through $p_1$. So there is a map \func{f}{S}{E} with $p_1f = q$. Consider the image of \func{p_2f}{S}{V}, where $p_2$ is the second projection: its name $b$ provides the right bounding set to witness the desired instance of the Strong collection scheme.

So far we have only used that $V$ is a fixed point, but to verify Set induction we use that it is indexed initial as well. If $\forall y  \epsilon x \, \phi(y) \rightarrow \phi(x)$ holds in $V$, then $L = \{ x \in V \, : \, V \models \phi(x) \}$ is a \spower-subalgebra of $V$. But the initial \spower-algebra has no proper \spower-subalgebras, so then $L \cong V$ and $\forall x \, \phi(x)$ holds in $V$.
\end{proof}

Several questions arise: is it possible to extend this result to cover the set theories {\bf IZF} and {\bf CZF}? The next proposition shows the answer to this question is \emph{yes}. Another question would be: does the indexed initial \spower-algebra always exist? As it turns out, the answer to this question is affirmative as well.

\begin{prop}{moreaxioms}
Assume the indexed initial \spower-algebra $V$ exists, and $\epsilon$ is the binary predicate defined on it as above. 
\begin{enumerate}
\item When \smallmap{S} satisfies {\bf (M)}, then $(V, \epsilon)$ validates the Full separation scheme.
\item When \smallmap{S} satisfies {\bf (PS)}, then $(V, \epsilon)$ validates the Power set axiom.
\item When \smallmap{S} satisfies {\bf (NS)}, then $(V, \epsilon)$ validates the Infinity axiom.
\item When \smallmap{S} satisfies {\bf (F)}, then $V$ validates the Fullness  axiom.
\end{enumerate}
\end{prop}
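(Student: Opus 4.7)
The plan is to handle each of the four items by translating the relevant categorical axiom into the internal set-theoretic statement about $V$, using throughout the technique of the proof of \refprop{regcompl}: exhibit a small subobject of $V$ and apply ${\rm Int}$ to obtain its name. Write $A = {\rm Ext}(a)$ and $B = {\rm Ext}(b)$ for elements $a, b \in V$.

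Items (1)--(3) should be essentially direct. For \textbf{(1)}, I would rerun the Bounded separation argument of \refprop{regcompl} without any boundedness restriction on $\phi$: the subobject
\[ S = \{ y \in {\rm Ext}(a) \, : \, V \models \phi(y) \} \subseteq {\rm Ext}(a) \]
is a mono into the small object ${\rm Ext}(a)$, and under \textbf{(M)} every mono lies in \smallmap{S}, so $S$ is small and has a name in $V$ witnessing Full separation. For \textbf{(2)}, the inclusion ${\rm Ext}(a) \hookrightarrow V$ induces a composite $\spower({\rm Ext}(a)) \to \spower V \to V$ (the last arrow being ${\rm Int}$), which is monic by Extensionality and has image $\{ y \in V : {\rm Ext}(y) \subseteq {\rm Ext}(a) \}$; by \textbf{(PS)} together with \reflemm{PEstableunderslicing}, $\spower({\rm Ext}(a))$ is small, so this image is a small subobject of $V$ whose name is the power set of $a$ in $V$. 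For \textbf{(3)}, using that $\NN$ is the W-type of $1 \to 1+1$, one defines by recursion a map $f \colon \NN \to V$ by $f(0) = {\rm Int}(\emptyset)$ and $f(n+1) = {\rm Int}({\rm Ext}(f(n)) \cup \{ f(n) \})$ (the von Neumann successor); under \textbf{(NS)} the object $\NN$ is small, so the image of $f$ is a small subobject of $V$ whose name is the required inductive set.

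The main work is in \textbf{(4)}. I apply \textbf{(F)} to the first projection $\phi \colon A \times B \to A$ over $X = 1$, noting that both $\phi$ and $A \to 1$ belong to \smallmap{S}. This yields a cover $q \colon X' \to 1$, a small map $y \colon Y \to X'$, and a displayed \emph{mvs} $P \subseteq A \times B$ of $\phi$ over $Y$ with the generic property. For each $u \in Y$ the fibre $P_u$ is a small multi-valued function from $A$ to $B$; I convert $u \mapsto P_u$ into a morphism $Y \to V$ by composing $P \to A \times B \to V \times V \to V$, where the last arrow computes the Kuratowski pair $(x, y) \mapsto \{ \{x\}, \{x, y\} \}$ inside $V$ via ${\rm Int}$, and then applying fibrewise over $Y$ the naming operation to the image, yielding $r_u \in V$ for each $u$. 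Working in $\ct{E}/X'$, where $Y \to X'$ is small, the image of $Y \to V$ is small over $X'$ and has a name $z \colon X' \to V$; since $q$ is a cover, this supplies the existential witness of Fullness internally. For the universal clause, interpreting everything in $\ct{E}/X'$, any generalised element of ${\bf mvf}(a, b)$ over some $\zeta \colon Z \to X'$ corresponds to a displayed \emph{mvs} $Q$ of $\phi$ over $Z$; the generic property of $P$ then produces $k \colon U \to Y$ and a cover $l \colon U \to Z$ with $yk = \zeta l$ and $k^* P \leq l^* Q$, which translates via the naming construction into the $V$-internal inclusion $r_{k(u)} \subseteq x$, giving the required $c \mathrel{\epsilon} z$ with $c \subseteq x$.

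The main obstacle I anticipate is the bookkeeping in \textbf{(4)}: one must construct the morphism $u \mapsto r_u$ cleanly using indexed Kuratowski pairing and the naming operation for small subobjects of $V$, handle the slice over the cover $X' \to 1$ correctly when interpreting the internal existential of Fullness, and verify that the external inequality $k^* P \leq l^* Q$ produced by genericity translates faithfully into the $V$-internal inclusion $c \subseteq x$. The remaining three items reduce to routine applications of the corresponding categorical axiom together with the small-subobject-to-name construction.
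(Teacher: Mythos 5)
Your proposal is correct, and for items (1)--(3) it coincides with the paper's proof: rerun the Bounded separation argument using {\bf (M)}, name the image of $\spower({\rm Ext}(a)) \rightarrow \spower V \rightarrow V$ using {\bf (PS)}, and name the image of the recursively defined $\NN \rightarrow V$ using {\bf (NS)}. The only divergence is in item (4): the paper applies {\bf (F)} to ${\rm Ext}(f)$ for a function $f$ in $V$ and verifies the equivalent \emph{mvs} formulation of Fullness, so the witnessing multi-valued sections are already small subobjects of ${\rm Ext}(b) \subseteq V$ and can be named directly, whereas you apply {\bf (F)} to the projection ${\rm Ext}(a) \times {\rm Ext}(b) \rightarrow {\rm Ext}(a)$ and verify the \emph{mvf} form, which costs you the internal Kuratowski-pairing step but in return makes explicit the handling of the cover $X' \rightarrow 1$ and of the genericity clause, both of which the paper's one-paragraph sketch leaves implicit.
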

\begin{proof} We again make extensive use of the internal language of \ct{E}.
\begin{enumerate}
\item The argument for Full separation is identical to the one for bounded Separation.
\item When \smallmap{S} satisfies {\bf (PS)}, then $\spower({\rm Ext}(x))$ is small for any $x \in V$. The same applies to the image of
\diag{ \spower({\rm Ext}(x)) \ar@{ >->}[r] & \spower V \ar[r]^{\rm Int} & V, }
whose name $y$ will be the small power set of $x$.
\item The morphism \func{\emptyset}{1}{V}, together with the map \func{s}{V}{V} which takes an element $x$ to $x\cup\{x\}$, yields a morphism \func{\alpha}{\NN}{V}. When $\NN$ is small, so is the image of $\alpha$, as a subobject of $V$. Applying ${\rm Int}$ to its name we get an infinite set in $V$.
\item Assuming that \smallmap{S} satisfies {\bf (F)}, there is for any function $\func{f}{b}{A} \in V$ a small subobject $Z \in \spower {\rm Ext}(b)$ of multi-valued sections of \func{{\rm Ext}(f)}{{\rm Ext}(b)}{{\rm Ext}(a)} that is full (in the sense that any \emph{mvs} contains one in this set). The value $z$ of $Z$ under the map
\diag{ \spower({\rm Ext}(b)) \ar@{ >->}[r] & \spower V \ar[r]^{\rm Int} & V, }
is then a full set of \emph{mvs}s of $f$ in $V$.
\end{enumerate}
\end{proof}

We will now prove the existence of an initial \spower-algebra in \ct{E}. The proof makes essential use of the exactness of \ct{E}, and, as mentioned before, it is one of our reasons for insisting on exactness for predicative categories with small maps. The idea behind this result, which shows how initial \spower-algebras can be constructed in the presence of W-types, is essentially due to Aczel in \cite{aczel78}. The first application of this idea in a categorical context was in \cite{moerdijkpalmgren02}. But before we go into the proof of this result, we first borrow from \cite{kouwenhovenvanoosten05} the following characterisation theorem for initial \spower-algebras (compare \reftheo{charthmWtypes}).
\begin{theo}{charinitpsalg}
Let \ct{E} be a category with a class of small maps \smallmap{S} satisfying {\bf (PE)}. The following are equivalent for a $\spower$-algebra $(V, \func{\rm Int}{\spower(V)}{V})$:
\begin{enumerate}
\item $(V, {\rm Int})$ is the initial $\spower$-algebra.
\item The structure map ${\rm Int}$ is an isomorphism and $V$ has no proper $\spower$-subalgebras in $\ct{E}$.
\item The structure map ${\rm Int}$ is an isomorphism and $X^{*}V$ has no proper $\slspower{X}$-subalgebras in $\ct{E}/X$, for every object $X$ in \ct{E}.
\item $(V, {\rm Int})$ is the indexed initial $\spower$-algebra.
\end{enumerate}
\end{theo}
\begin{proof}
See \cite{kouwenhovenvanoosten05} and \reftheo{charthmWtypes}.
\end{proof}

Note that the characterisation theorem also shows that initial \spower-algebras are always indexed.

\begin{theo}{existinitpsalg}
If $(\ct{E}, \smallmap{S})$ is a predicative category with small maps, then the initial \spower-algebra exists in \ct{E}.
\end{theo}
\begin{proof}
The proof is very similar to that of \refprop{redlemmaWtypes}, so we will frequently refer to that proof for more details. In particular, we again construct a bisimulation on a W-type, which can be done by glueing together local solutions given by bisimulation tests.

Consider $W = W_{\pi}$, the W-type associated to the representation \func{\pi}{E}{U} for \smallmap{S}. To obtain the initial \spower-algebra, we want to quotient $W$ by \emph{bisimulation}, by which we now mean a binary relation $\sim$ on $W$ such that
\begin{eqnarray*}
{\rm sup}_u(t) \sim {\rm sup}_{u'}(t') & \Leftrightarrow & \forall e \in E_u \, \exists e' \in E_{u'} \, te \sim t'e' \\
& & \mbox{ and } \forall e' \in E_{u'} \, \exists e \in E_{u} \, te \sim t'e'.
\end{eqnarray*}
It can again be shown by induction that bisimulations are unique, but the difficulty is to show that they exist.

Using the notion of a transitive closure from the proof of \reftheo{charthmWtypes}, we define the appropriate notion of a \emph{bisimulation test}. For a pair $(w,w') \in W^2$, call a function \func{g}{{\rm tc}(w) \times {\rm tc}(w')}{\spower 1} a bisimulation test, when for all ${\rm sup}_u t \in {\rm tc}(w), {\rm sup}_{u'} t' \in {\rm tc}(w')$ the equality
\[ g({\rm sup}_u t, {\rm sup}_{u'} t') = \bigwedge_{e \in E_u} \bigvee_{e' \in E_{u'}} g(te, t'e') \wedge \bigwedge_{e' \in E_{u'}} \bigvee_{e \in E_{u}} g(te, t'e') \]
holds. In the manner of \refprop{redlemmaWtypes} it can be shown that there is a unique bisimulation test for every pair $(w,w')$. 

Using now that for every pair there is a unique bisimulation test, we can define the desired bisimulation $\sim$ by putting
\begin{eqnarray*}
w \sim w' & \Leftrightarrow & g(w,w') = \top,
\end{eqnarray*}
if $g$ is the unique bisimulation test for $(w, w')$. By construction it is a bisimulation, which is also bounded.

Using the inductive properties of $W$ again, we can see that any bisimulation on $W$ is an equivalence relation. So $\sim$ is a bounded equivalence relation for which we can take the quotient $V = W/ \sim$, with quotient map $q$. 

We claim $V$ is the initial \spower-algebra. We first need to see that it is a fixed point for the \spower-functor. To this end, we consider the solid arrows in the following diagram
\diag{ P_{\pi} W \ar@{->>}[r]^{\tau_W} \ar[d]_{\rm sup} & \spower W \ar@{->>}[r]^{\spower q} & \spower V \ar@/^/@{.>}[d]^{\rm Int} \\
W \ar@{->>}[rr]_q & & V, \ar@/^/@{.>}[u]^{\rm Ext} }
where $\tau_W$ is the component on $W$ of the natural transformation in \refcoro{PiEimpliesPE}, and $\spower q$ is a cover by \refprop{collandPE}.
One quickly sees that the notion of a bisimulation is precisely such that maps Int and Ext making the above diagram commute have to exist. To see that it is the initial \spower-algebra, we use the criterion in \reftheo{charinitpsalg}. Simply note that if $L$ is a (proper) \spower-subalgebra of $V$, then
\[ q^{-1}L = \{ w \in W \, : \, q(w) \in L \} \]
is a (proper) $P_{\pi}$-subalgebra of $W$.
\end{proof}

The next result summarises the results we have obtained in this section:
\begin{coro}{soundness}
Let $(\ct{E}, \smallmap{S})$ be a predicative category with small maps. Then $(\ct{E}, \smallmap{S})$ contains a model $(V, \epsilon)$ of the set theory {\bf RST} given by the initial \spower-algebra. Moreover, if \smallmap{S} satisfies the axioms {\bf (NS), (M)} and {\bf (PS)}, the structure $(V, \epsilon)$ models {\bf IZF}; and if the class of small maps satisfies {\bf (NS)} and {\bf (F)}, it is a model of {\bf CZF}.
\end{coro}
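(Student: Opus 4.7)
The plan is essentially to assemble the pieces already established in this section. First, I would invoke \reftheo{existinitpsalg} to guarantee that the indexed initial $\spower$-algebra $V$ exists in $\ct{E}$: this uses precisely the fact that $(\ct{E}, \smallmap{S})$ is bounded exact, representable, and satisfies $(\Pi\mathbf{E})$ and $(\mathbf{WE})$, since the construction of $V$ proceeds as a subquotient of the W-type $W_\pi$ associated to the representation $\pi$, with the quotient taken along a bounded bisimulation (which requires bounded exactness). Equipping $V$ with the membership predicate $x \,\epsilon\, y :\Leftrightarrow x \in \mathrm{Ext}(y)$ defined through the inverse $\mathrm{Ext}$ of $\mathrm{Int}$, I obtain a structure $(V, \epsilon)$ in the language of set theory.

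Next, by \refprop{regcompl}, this structure already satisfies Extensionality, Emptyset, Pairing, Union, Bounded Separation, Strong Collection, and Set Induction -- i.e.\ the full list of axioms constituting $\mathbf{RST}$ as listed in Appendix A. This gives the first assertion of the corollary without further work.

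For the extension to $\mathbf{IZF}$, I would observe that $\mathbf{IZF}$ is obtained from $\mathbf{RST}$ by adjoining Full Separation, the Power set axiom and the axiom of Infinity. The hypothesis that $\smallmap{S}$ satisfies $(\mathbf{M})$, $(\mathbf{PS})$ and $(\mathbf{NS})$ then lets me invoke items (1), (2) and (3) of \refprop{moreaxioms} in turn, which provide exactly these three schemes/axioms in $(V, \epsilon)$. Since $\mathbf{RST}$ was already verified, this gives a model of $\mathbf{IZF}$. For $\mathbf{CZF}$, I similarly recall that $\mathbf{CZF}$ is $\mathbf{RST}$ together with Infinity and Subset Collection, and that over the remaining axioms Subset Collection is equivalent to the Fullness axiom (as noted in Section 3.7, following \cite{aczelrathjen01}). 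Hence from $(\mathbf{NS})$ and $(\mathbf{F})$ and items (3) and (4) of \refprop{moreaxioms} I obtain Infinity and Fullness in $V$, and the model of $\mathbf{CZF}$ follows.

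Since the corollary is essentially a summary of previously proved results, there is no real obstacle: the only point requiring minor care is confirming that the precise axiomatisations of $\mathbf{RST}$, $\mathbf{IZF}$ and $\mathbf{CZF}$ given in Appendix A really do decompose as $\mathbf{RST} + (\mathbf{Sep}) + (\mathbf{Pow}) + (\mathbf{Inf})$ and $\mathbf{RST} + (\mathbf{Inf}) + (\mathbf{Fullness})$ respectively, so that the four items of \refprop{moreaxioms} suffice to cover all additional axioms beyond $\mathbf{RST}$.
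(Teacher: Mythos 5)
Your proposal is correct and follows exactly the route the paper intends: the corollary is a summary of \reftheo{existinitpsalg}, \refprop{regcompl} and \refprop{moreaxioms}, together with the decompositions of {\bf IZF} and {\bf CZF} over {\bf RST} from Appendix A and the equivalence of Subset collection with Fullness noted in Section 3.7. Nothing is missing.
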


Completeness of this semantics for all three set theories will be proved in the next section.

\begin{rema}{compwithothsettings} This might be the right time to compare our approach and concepts to some of the other ones available in the literature, and our notion of a predicative category with small maps in particular.

In the book ``Algebraic Set Theory'' \cite{joyalmoerdijk95}, the basic notion of a category with small maps on pages 7--9 is given by a Heyting pretopos \ct{E} with an nno equipped with a representable class of maps satisfying {\bf (A1-7)} and {\bf ($\Pi$E)}. Our notion of a predicative category with small maps is both stronger and weaker: it is stronger, because we have added the axioms {\bf (A8)} and {\bf (A9)}, as well as {\bf (WE)}; it is weaker, because we have bounded exactness only. To be absolutely precise: in \cite{joyalmoerdijk95} the authors work with a different notion of representability, which is equivalent to ours in the bounded exact context (as was shown in \refprop{reprinJMsense}), but probably stronger in the context of Heyting pretoposes.

For showing the existence of the initial \spower-algebra in Chapter III of \cite{joyalmoerdijk95}, the authors make the additional assumption of the presence of a subobject classifier in \ct{E}. This assumption was too impredicative for our purposes, so therefore we have assumed the existence of W-types in the form of {\bf (WE)} instead. (Note that the existence of a subobject classifier, as well as the axioms {\bf (A8)} and {\bf (A9)}, all follow from the impredicative axiom {\bf (M)}.)\footnote{We suspect that {\bf (WE)} follows from the existence of a subobject classifier, but we haven't checked this.}

In \cite{awodeywarren05}, Awodey and Warren call a positive Heyting category with a class of maps satisfying {\bf (A1-9)} and {\bf (PE)}, with the possible exception of the Collection axiom {\bf (A7)}, a \emph{basic class structure}. To this, our notion of a predicative category with small maps adds the Collection axiom {\bf (A7)}, bounded exactness, representability and {\bf (WE)}. But note that all these axioms are valid in the ideal models that they study.
\end{rema}

\section{Completeness}

In this section we will show that the semantics for {\bf IZF} and {\bf CZF} we have developed in the previous section is complete. In order to show this, we need to make our ``informal example'' (cf.~\refrema{infexample}) more concrete. This we can do in two ways: either we can consider the classes and sets of {\bf IZF} and {\bf CZF} as being given by formulas from the language, or we work relative to a model. To be more precise:
\begin{rema}{syntcateg}
For any set theory {\bf T} extending {\bf RST} we can build the \emph{syntactic category} $\ct{E}[{\bf T}]$. Objects of this category are the ``definable classes'', meaning expressions of the form $\{ x : \phi(x) \}$, while identifying syntactic variants. Morphisms are ``definable class morphisms'': a morphism from the object $\{ x : \phi(x) \}$ to $\{ y : \psi(y) \}$, where we can assume that $x$ and $y$ are different, is an equivalence class of formulas $\alpha(x, y)$ such that the following is derivable in {\bf T}:
\[ \forall x \, ( \, \phi(x) \rightarrow \exists ! y \, ( \, \psi(y) \land \alpha(x, y) \, ) \, ). \]
Two such formulas $\alpha(x,y)$ and $\beta(x, y)$ are identified when {\bf T} proves
\[ \forall x \, \forall y \, ( \, \phi(x) \land \psi(y) \rightarrow ( \, \alpha(x, y) \leftrightarrow \beta(x, y) \, ) \, ). \]
One readily shows that this syntactic category is a positive Heyting category. It is actually a category with small maps, when, following the intuition, we declare those class morphisms whose fibres are sets to be small. So a morphism represented by $\alpha(x,y)$ from the object $\{ x : \phi(x) \}$ to $\{ y: \psi(y) \}$ is a small map, when {\bf T} proves
\[ \forall y \, (\, \psi(y) \rightarrow \exists a \, \forall x \, ( \, x \epsilon a \leftrightarrow \alpha(x, y) \land \phi(x) \, ) \, ). \]
The category with small maps obtained in this way will be denoted by $(\ct{E}[{\bf T}], \smallmap{S}[{\bf T}])$.
\end{rema}

\begin{rema}{catofsmmfrommodel}
Let $(M, \epsilon)$ be a structure (in the ordinary, set-theoretic sense) having the signature of the language of set theory, modelling the set-theoretic axioms of {\bf RST}. By the same construction as in the previous example, but replacing everywhere derivability in {\bf T} by validity in $M$, we obtain a category with small maps $(\ct{E}[M], \smallmap{S}[M])$ from $M$.
\end{rema}

The main results about these two examples are the following:
\begin{prop}{validinsyntcat}
For a set theory {\bf T} extending {\bf RST}, the class of small maps $\smallmap{S}[{\bf T}]$ in the syntactic category $\ct{E}[{\bf T}]$ is representable and satisfies {\bf (PE)} and {\bf (WE)}. Moreover, when 
\[ V[{\bf T}] = \{ x \, : \, x = x \} \]
is the class of all sets in $(\ct{E}[{\bf T}], \smallmap{S}[{\bf T}])$, then $V[{\bf T}]$ is the initial \spower-algebra, and for any set-theoretic sentence $\phi$:
\[ V[{\bf T}] \models \phi \Leftrightarrow {\bf T} \vdash \phi. \]
\end{prop}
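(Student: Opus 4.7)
The plan is to verify the clauses of the proposition in sequence, exploiting throughout the principle that the internal logic of $\ct{E}[{\bf T}]$ is by construction identical to derivability in ${\bf T}$. For representability, I would take $\func{\pi}{E}{V[{\bf T}]}$ with $E = \{(x,a) : x \epsilon a\}$ and $\pi$ the second projection. The fibre over a set $a$ is $a$ itself, so $\pi$ lies in $\smallmap{S}[{\bf T}]$; and any small map of the form $\func{f}{\{(x,y) : \alpha(x,y)\land\phi(x)\}}{\{x : \phi(x)\}}$ is literally the pullback of $\pi$ along the class function $x \mapsto a_x$, where $a_x$ is the unique (by Extensionality) set with $\forall y(y \epsilon a_x \leftrightarrow \alpha(x,y))$ guaranteed by smallness of $f$. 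For {\bf (PE)}, set $\spower\{x : \phi(x)\} := \{a : \forall y(y \epsilon a \to \phi(y))\}$ with universal membership $\in_X := \{(x,a) : \phi(x) \land x \epsilon a\}$; the projection $\in_X \to \spower X$ has fibre $a$ (a set) over $a$, hence is small, and the universal property is routine, since a displayed family $R \subseteq X \times Y$ yields the classifying function $y \mapsto \{x : (x,y) \in R\}$, well-defined and unique by smallness of $R \to Y$ combined with Extensionality.

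For {\bf (WE)}, given a small $\func{f}{B}{A}$ I would construct $W_f$ as the definable class of well-founded labelled trees whose branching type is $f$, coded as sets equipped with a root, edge relation and label function satisfying a bounded coherence condition relative to $f$. Strong Collection handles the sup-operations (for $a \in A$ and a class function $\func{t}{B_a}{W_f}$ one replaces $t$ by a set of codes via Strong Collection applied to $B_a$), Bounded Separation cuts out the coherence and well-foundedness conditions, and Set Induction yields the absence of proper $P_f$-subalgebras of $W_f$, so initiality follows from \reftheo{charthmWtypes}. I expect this step to be the main obstacle: since ${\bf T}$ need not validate Exponentiation, one cannot internalise the function space $W_f^{B_a}$ as a set, so Aczel's construction must be adapted with care, using the fact that $W_f$ is permitted to be a proper class in $\ct{E}[{\bf T}]$.

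For the initial $\spower$-algebra claim, observe that the construction above gives $\spower V[{\bf T}] = \{a : \forall x(x \epsilon a \to x = x)\} = V[{\bf T}]$, with both $\mathrm{Int}$ and $\mathrm{Ext}$ equal to $\id_{V[{\bf T}]}$. Thus $V[{\bf T}]$ is a $\spower$-fixed point, and a $\spower$-subalgebra $L \subseteq V[{\bf T}]$ is precisely an $\epsilon$-inductive subclass, hence equal to $V[{\bf T}]$ by the Set Induction scheme of ${\bf T}$; initiality then follows from \reftheo{charinitpsalg}. Finally, since $\mathrm{Ext} = \id$ the binary predicate $\epsilon$ on $V[{\bf T}]$ defined in Section~7 coincides verbatim with the membership symbol of ${\bf T}$, so a straightforward induction on the structure of set-theoretic formulas shows that $V[{\bf T}] \models \phi$ is equivalent, in the internal language of $\ct{E}[{\bf T}]$, to the corresponding internal formula $\phi$; by construction of the syntactic category, this last statement holds iff ${\bf T} \vdash \phi$, completing the proposition.
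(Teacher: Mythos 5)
Your proposal follows the paper's proof essentially step for step: the same representation $E=\{(x,a)\,:\,x\epsilon a\}\rightarrow V[{\bf T}]$ via the second projection, the same power class $\spower\{x:\phi(x)\}=\{a:\forall y\epsilon a\,\phi(y)\}$, the same identification of $V[{\bf T}]$ as a $\spower$-fixed point with no proper subalgebras via Set Induction (hence initial by \reftheo{charinitpsalg}), and the same induction on the complexity of formulas for the final equivalence between validity in $V[{\bf T}]$ and derivability in ${\bf T}$. The only divergence is at {\bf (WE)}, which the paper disposes of by citing Aczel--Rathjen; your sketch is in the right spirit and correctly identifies Strong Collection and the class-status of $W_f$ as the key points, but note that well-foundedness and the induction principle for the class $W_f$ come from Set Induction (by coding trees as sets so that the subtree relation refines membership, or via set-sized approximations as in the cited theory of inductive definitions), not from Bounded Separation, which plays no role since $W_f$ is a class rather than a set.
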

\begin{proof} We first describe a representation \func{\pi}{E}{U} of $\smallmap{S}[{\bf T}]$. $U$ is the class of all sets $\{ x \, :\,  x=x \}$, while 
\begin{labequation}{membership}
E = \{ x \, : \, \exists y \, \exists z \, ( \, x = (y, z) \land y \epsilon z \, ) \}.
\end{labequation}%
The map $\pi$ is the projection on the second coordinate (here and below we are implicitly using some coding of $n$-tuples in set theory). 
%This is even a representation in the strong sense that every small map can be %obtained as a pullback of it, but for our purposes this is unimportant.

The description of the $\spower$-functor on $(\ct{E}[{\bf T}], \smallmap{S}[{\bf T}])$ is what one would think it is. For an object $X = \{ x : \phi(x) \}$ of $\ct{E}[{\bf T}]$, the power class object $\spower (X)$ is given by
\[ \{ y \, : \, \forall x \epsilon y \, \phi(x) \}, \]
showing that {\bf (PE)} holds in the syntactic category. That $\smallmap{S}[{\bf T}]$ satisfies {\bf (WE)} as well follows from Example 3 on page 5--4 of \cite{aczelrathjen01}.

In view of the description of the $\spower$-functor above, it is clear that $V[{\bf T}]$ one of its fixed points. Since {\bf T} includes the Set induction scheme, $V[{\bf T}]$ is actually a fixed point for $\spower$ having no proper \spower-subalgebras. So it is the initial \spower-algebra by \reftheo{charinitpsalg}. 

It is also easy to see that the membership relation induced on $V[{\bf T}]$ is given by $E$ in \refeq{membership}. In general, one can prove by induction on its complexity that any set-theoretic formula $\phi(x_1, \ldots, x_n)$ is interpreted by the subobject of $V[{\bf T}]^n$ given by:
\[ \{ x \, : \, \exists x_1, \ldots, x_n \, ( \, x = (x_1, \ldots, x_n) \land \phi(x_1, \ldots, x_n)  \, ) \}. \]
From this and the definition of morphisms in $\ct{E}[{\bf T}]$ it follows that derivability in the set theory {\bf T} coincides with validity in the model $V[{\bf T}]$.
\end{proof}

\begin{prop}{validinmodcat}
Let $(M, \epsilon)$ be a structure (in the sense of model theory) modelling {\bf RST}. Then the class of small maps $\smallmap{S}[M]$ in the category $(\ct{E}[M], \smallmap{S}[M])$ is representable and satisfies {\bf (PE)} and {\bf (WE)}. Moreover, when 
\[ V[M] = \{ x \, : \, x = x \} \]
is the class of all sets in $(\ct{E}[{\bf T}], \smallmap{S}[{\bf T}])$, then $V[M]$ is the initial \spower-algebra, and for any set-theoretic sentence $\phi$:
\[ V[M] \models \phi \Leftrightarrow M \models \phi. \]
\end{prop}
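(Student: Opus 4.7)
The plan is to mimic the proof of \refprop{validinsyntcat} essentially verbatim, using the model $M$ in place of the syntactic theory $\mathbf{T}$: everywhere the previous proof reads ``${\bf T} \vdash \ldots$'' we substitute ``$M \models \ldots$''. All the set-theoretic manipulations invoked in the earlier proof use only axioms of ${\bf RST}$, which $M$ satisfies by hypothesis, so the constructions and verifications transport without change.

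First I would exhibit the representation \func{\pi}{E}{U} of $\smallmap{S}[M]$, taking $U=\{x:x=x\}$ and $E=\{x:\exists y\,\exists z\,(x=(y,z)\wedge y\epsilon z)\}$, with $\pi$ the second projection. The argument that any small map in $\ct{E}[M]$ is covered by a pullback of $\pi$ is exactly as in \refprop{validinsyntcat}, using Strong collection in $M$. Next, for an object $X=\{x:\phi(x)\}$, I would set $\spower(X)=\{y:\forall x\epsilon y\,\phi(x)\}$, and verify its universal property using the fact that $M$ satisfies Bounded separation; this gives {\bf (PE)}. The axiom {\bf (WE)} is obtained by invoking the same construction of W-types in {\bf CZF}/{\bf RST} as in Example~3 on page~5--4 of \cite{aczelrathjen01}, whose verification relies only on axioms valid in $M$.

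Then I would identify the initial $\spower$-algebra. The object $V[M]=\{x:x=x\}$ is visibly a fixed point of $\spower$ by the description above, with structure map ${\rm Int}$ given by $y\mapsto y$. To apply the criterion of \reftheo{charinitpsalg}, it suffices to show $V[M]$ has no proper $\spower$-subalgebras. If $L=\{x:\lambda(x)\}\subseteq V[M]$ is a $\spower$-subalgebra, then $M$ satisfies $\forall x\,(\forall y\epsilon x\,\lambda(y)\rightarrow\lambda(x))$; since $M$ models the Set induction scheme of ${\bf RST}$, we get $M\models\forall x\,\lambda(x)$, whence $L=V[M]$.

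Finally, I would verify the equivalence $V[M]\models\phi\Leftrightarrow M\models\phi$ by induction on the complexity of $\phi$. The base case requires checking that the interpretation of $\epsilon$ on $V[M]$ is given by the subobject $E$ of \refeq{membership}, which is immediate from the definition of ${\rm Ext}$ via $\spower$. The inductive step shows that each formula $\phi(x_1,\ldots,x_n)$ is interpreted by the subobject $\{x:\exists x_1,\ldots,x_n\,(x=(x_1,\ldots,x_n)\wedge\phi(x_1,\ldots,x_n))\}$ of $V[M]^n$, using that the positive Heyting structure and small quantifications in $\ct{E}[M]$ were defined to mirror the corresponding set-theoretic operations in $M$. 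I do not expect any genuine obstacle here: the only mild point to be careful about is that ``validity in $M$'' respects the logical connectives in exactly the way needed to make the inductive clauses go through, which is automatic since $M$ is a first-order structure for the language of set theory.
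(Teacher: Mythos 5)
Your proposal is correct and is exactly the argument the paper intends: the paper's own proof of this proposition consists of the single line ``As in \refprop{validinsyntcat}'', i.e.\ repeat the syntactic-category argument with derivability in ${\bf T}$ replaced by validity in $M$, which is precisely what you carry out (representation via $E \rTo U$, power classes via Bounded separation, {\bf (WE)} via the {\bf CZF} construction of W-types, initiality via Set induction, and the truth-preservation by induction on formula complexity). No gaps.
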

\begin{proof} As in \refprop{validinsyntcat}.
\end{proof}

The last proposition makes clear how our categorical semantics extends the usual set-theoretic one.

We can now use the syntactic category to obtain a strong completeness result for {\bf RST}.
\begin{theo}{complforRST} For any set theory {\bf T} extending {\bf RST} there is a predicative category with small maps $(\ct{E}, \smallmap{S})$ such that for the initial \spower-algebra $V$ in $(\ct{E}, \smallmap{S})$ we have
\[ V \models \phi \Leftrightarrow {\bf T} \vdash \phi \]
for every set-theoretic sentence $\phi$. Therefore a set-theoretic sentence valid in every initial \spower-algebra in a predicative category with small maps $(\ct{E}, \smallmap{S})$ is a consequence of the axioms of {\bf RST}.
\end{theo}
\begin{proof}
For the predicative category with small maps $(\ct{E}, \smallmap{S})$ we take the exact completion of the syntactic category $(\ct{E}[{\bf T}], \smallmap{S}[{\bf T}])$ associated to ${\bf T}$.

We claim that the image ${\bf y}V[{\bf T}]$ of the initial \spower-algebra $V[{\bf T}]$ in the syntactic category is the initial \spower-algebra $V$ in \ct{E}. Since the embedding {\bf y} commutes with $\spower$ (by \refprop{presPE}), the object ${\bf y}V[{\bf T}]$ is still a fixed point for $\spower$ in \ct{E}. It does not have any proper \spower-subalgebras, because {\bf y} commutes with $\spower$ and is bijective on subobjects. Therefore it is the initial \spower-algebra $V$ by \reftheo{charinitpsalg}. 

Finally, the embedding {\bf y} is a Heyting functor which is bijective on subobjects, so we have for any set-theoretic sentence $\phi$:
\[ V \models \phi \Leftrightarrow V[{\bf T}] \models \phi \Leftrightarrow {\bf T} \vdash \phi. \]
\end{proof}

To extend this strong completeness result to {\bf IZF} and {\bf CZF} we need to prove the following proposition:
\begin{prop}{convforsyntcat}
Let $(\ct{E}[{\bf T}], \smallmap{S}[{\bf T}])$ be the syntactic category associated to a set theory ${\bf T}$ extending {\bf RST}. Then
\begin{enumerate}
\item ${\bf T} \vdash \mbox{\rm Full separation} \Rightarrow \smallmap{S}[{\bf T}]$ satisfies {\bf (M)}.
\item ${\bf T} \vdash \mbox{\rm Power set} \Rightarrow \smallmap{S}[{\bf T}]$ satisfies {\bf (PS)}.
\item ${\bf T} \vdash \mbox{\rm Infinity} \Rightarrow \smallmap{S}[{\bf T}]$ satisfies {\bf (NS)}.
\item ${\bf T} \vdash \mbox{\rm Fullness} \Rightarrow \smallmap{S}[{\bf T}]$ satisfies {\bf (F)}.
\end{enumerate}
The same statements hold for the category of small maps $(\ct{E}[M], \smallmap{S}[M])$ induced by a set-theoretic model $(M, \epsilon)$ of {\bf RST}.
\end{prop}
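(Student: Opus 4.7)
The plan is to verify each of the four implications directly, by unfolding the categorical axiom into a statement about fibres (using the description of morphisms in $\ct{E}[{\bf T}]$) and then applying the corresponding set-theoretic axiom in ${\bf T}$. Items (1)--(3) are essentially syntactic translations, while (4) requires a more delicate assembly using Strong Collection from {\bf RST}.

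For (1), replace a mono $m$ by its image factorisation, so that it is (up to iso) a subclass inclusion $\{b : \psi(b) \land \chi(b)\} \hookrightarrow \{b : \psi(b)\}$; the fibre over $b_0$ is then the subsingleton $\{b \in \{b_0\} : \chi(b)\}$, which is a set by Full Separation applied to the singleton $\{b_0\}$ (a set by pairing). For (2), the definition of a small map in $\ct{E}[{\bf T}]$ says precisely that each fibre $Y_x$ of a small $f \colon Y \rTo X$ is a set; the fibre of $\slspower_X(f) \rTo X$ over $x$ is then in bijection with the set of subsets of $Y_x$, which exists by Power Set. For (3), the natural numbers object $\NN$ in $\ct{E}[{\bf T}]$ is represented by the class $\{n : n \in \omega\}$ --- one verifies by set-induction that $\omega$ is the initial $P_i$-algebra for $i \colon 1 \rTo 1 + 1$ in $\ct{E}[{\bf T}]$ --- and Infinity says precisely that $\omega$ is a set, so $\NN \rTo 1$ is small.

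For (4), let $\phi \colon B \rTo A$ be a small map over $X$ with $A \rTo X$ small. Then for every $x \in X$ the restricted map $B|_{A_x} \rTo A_x$ is a map between sets of ${\bf T}$. Applying the set-theoretic Fullness axiom fibrewise yields, for each $x \in X$, a set $z_x$ of small \emph{mvs}s such that every \emph{mvs} of $B|_{A_x} \rTo A_x$ contains a member of $z_x$. Using Strong Collection (part of {\bf RST}), we pass to a cover $q \colon X' \rTo X$, where $X' = \{(x, z) : z \mbox{ witnesses Fullness for } x\}$, and set $Y = \{(x, z, p) : (x, z) \in X',\ p \in z\}$ with projection $y \colon Y \rTo X'$, which is small by construction. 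The tautological displayed \emph{mvs} $P = \{((x, z, p), b) : b \in p\}$ of $\phi$ pulled back along $y$ is generic: given any other displayed \emph{mvs} $Q$ of $\phi$ over some $z \colon Z \rTo X'$, the Fullness property of the second component at each point of $Z$ produces, after covering $Z$ by the graph of a choice of member $p \subseteq Q$, a map $k \colon U \rTo Y$ witnessing $k^* P \leq l^* Q$ as required.

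The main obstacle is the bookkeeping in (4): turning the existential witnesses produced by set-theoretic Fullness into a genuine morphism $y \colon Y \rTo X'$ in the syntactic category, rather than merely an external family. This is exactly why the cover $X' \rTo X$ appears in the formulation of {\bf (F)} --- a plain map $X \rTo W$ would require a choice principle --- and it is precisely Strong Collection that supplies the bounding set $W$ from which $X'$ is carved. The second half of the proposition, concerning $(\ct{E}[M], \smallmap{S}[M])$, is entirely analogous: all the arguments above carry over verbatim with ``provability in ${\bf T}$'' replaced by ``validity in $M$'', using the assumption that $M$ satisfies the relevant set-theoretic axioms.
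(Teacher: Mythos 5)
Your verification is correct, and since the paper's own proof consists only of the words ``Routine verification,'' yours is in effect the intended argument written out: items (1)--(3) are direct fibrewise translations, and for (4) the cover $X' \rTo X$ by the class of pairs $(x,z)$ with $z$ a Fullness witness, the small map $Y \rTo X'$ with fibre $z$, and the tautological displayed \emph{mvs} are exactly the right construction. One small correction: Strong Collection is not what makes $X' \rTo X$ a cover --- $X'$ is a definable class, not a set carved from a bounding set, and the projection is a cover simply because ${\bf T}$ proves $\forall x\, \exists z$ via Fullness; where Strong Collection genuinely enters is in seeing that a displayed \emph{mvs} $Q$ over a point of $Z$ is itself a set (a set-indexed union of sets), so that set-theoretic Fullness can be applied to it.
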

\begin{proof}
Routine verification. [We plan to write out at least one of the items in more detail.]
\end{proof}

We derive the promised completeness theorems:
\begin{coro}{complforIZF}
There is a predicative category with class maps $(\ct{E}, \smallmap{S})$ with \smallmap{S} satisfying {\bf (NS), (M)} and {\bf (PS)} such that for the initial \spower-algebra $V$ in $(\ct{E}, \smallmap{S})$ we have
\[ V \models \phi \Leftrightarrow {\bf IZF} \vdash \phi \]
for any set-theoretic sentence $\phi$. Therefore a set-theoretic sentence valid in every initial \spower-algebra in a predicative category with small maps $(\ct{E}, \smallmap{S})$ with \smallmap{S} satisfying {\bf (NS), (M)} and {\bf (PS)} is a consequence of the axioms of {\bf IZF}.
\end{coro}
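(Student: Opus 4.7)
The plan is to mimic the proof of \reftheo{complforRST} with $\mathbf{T} = \mathbf{IZF}$, and then use the stability results of Section 6 to ensure that the additional axioms survive passage to the exact completion. Concretely, I would first form the syntactic category with small maps $(\ct{E}[\mathbf{IZF}], \smallmap{S}[\mathbf{IZF}])$ of \refrema{syntcateg}. By \refprop{validinsyntcat}, this is a positive Heyting category with a representable class of small maps satisfying {\bf (PE)} and {\bf (WE)}, and $V[\mathbf{IZF}]$ is the initial $\spower$-algebra, validating exactly the $\mathbf{IZF}$-provable sentences. By \refprop{convforsyntcat}, since $\mathbf{IZF}$ proves Full Separation, Power set and Infinity, the class $\smallmap{S}[\mathbf{IZF}]$ moreover satisfies {\bf (M)}, {\bf (PS)} and {\bf (NS)}.

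Next, I would let $(\ct{E}, \smallmap{S}) := (\overline{\ct{E}[\mathbf{IZF}]}, \overline{\smallmap{S}[\mathbf{IZF}]})$ be the exact completion and verify that this is a predicative category with small maps satisfying {\bf (M), (PS), (NS)}. Bounded exactness comes from \reflemm{propofebar}; the axioms {\bf ($\Pi$E)} and {\bf (WE)} and representability are preserved by \refprop{presPiE}, \reftheo{transfWE} and \refprop{presrepr}; the three additional axioms {\bf (M)}, {\bf (PS)} and {\bf (NS)} are preserved by \refprop{presM}, \refprop{PSstable} and \refprop{presnno} respectively. Thus $(\ct{E}, \smallmap{S})$ is of the required type.

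It then remains to identify the initial $\spower$-algebra $V$ in $(\ct{E}, \smallmap{S})$ with the image ${\bf y}V[\mathbf{IZF}]$ of the initial $\spower$-algebra in the syntactic category, and to compare validity. For the first point, I would argue exactly as in \reftheo{complforRST}: since by \refprop{presPE} the embedding ${\bf y}$ commutes with $\spower$, the object ${\bf y}V[\mathbf{IZF}]$ is a fixed point of $\spower$ in $\ct{E}$; since ${\bf y}$ is bijective on subobjects (\reftheo{existexcompl}(3)) and commutes with $\spower$, ${\bf y}V[\mathbf{IZF}]$ has no proper $\spower$-subalgebras, and so is the initial $\spower$-algebra by \reftheo{charinitpsalg}. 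For the second point, since ${\bf y}$ is a morphism of positive Heyting categories which is bijective on subobjects, it preserves and reflects the internal interpretation of set-theoretic formulas, so
\[ V \models \phi \iff {\bf y}V[\mathbf{IZF}] \models \phi \iff V[\mathbf{IZF}] \models \phi \iff \mathbf{IZF} \vdash \phi, \]
the last equivalence being \refprop{validinsyntcat}. Soundness of the semantics for the additional axioms is given by \refprop{moreaxioms}, which together with \refcoro{soundness} yields the converse ``valid everywhere $\Rightarrow$ provable'' direction.

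The main obstacle in this plan is really already discharged by the work of Section 6: one must check that the triple $\{\mathbf{(M)},\mathbf{(PS)},\mathbf{(NS)}\}$ survives exact completion. Of these, {\bf (M)} and {\bf (NS)} are essentially formal (via \refprop{presM} and \refprop{presnno}), while {\bf (PS)} requires the more substantial argument of \refprop{PSstable}, which in turn relies on the description of power class objects in $\overline{\ct{E}}$ given by \refprop{presPE} and on \refprop{collandPE}. Once these preservation theorems are in hand, the completeness statement reduces to a purely formal comparison between the syntactic model and its image under ${\bf y}$, which is guaranteed by the bijection on subobjects of \reftheo{existexcompl}(3).
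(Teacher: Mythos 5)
Your proposal is correct and follows exactly the route the paper intends: the paper derives \refcoro{complforIZF} from \reftheo{complforRST} (exact completion of the syntactic category, identification of ${\bf y}V[{\bf T}]$ as the initial $\spower$-algebra via \refprop{presPE} and \reftheo{charinitpsalg}, and comparison of validity through the bijection on subobjects) together with \refprop{convforsyntcat} and the Section 6 preservation results for {\bf (M)}, {\bf (PS)} and {\bf (NS)}. Your citations of the specific preservation propositions are the right ones, and no step is missing.
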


\begin{coro}{complforCZF}
There is a predicative category with small maps $(\ct{E}, \smallmap{S})$ with \smallmap{S} satisfying {\bf (NS)} and {\bf (F)} such that for the initial \spower-algebra $V$ in $(\ct{E}, \smallmap{S})$ we have 
\[ V \models \phi \Leftrightarrow {\bf CZF} \vdash \phi \]
for any set-theoretic sentence $\phi$. Therefore a set-theoretic sentence valid in every initial \spower-algebra in a predicative category with small maps $(\ct{E}, \smallmap{S})$ with \smallmap{S} satisfying {\bf (NS)} and {\bf (F)} is a consequence of the axioms of {\bf CZF}.
\end{coro}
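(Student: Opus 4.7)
The plan is to follow the template already used for \refcoro{complforIZF} (and based on \reftheo{complforRST}), taking the exact completion of the syntactic category attached to \textbf{CZF} and then invoking the various preservation results established in Section 6. Concretely, I would begin with the category with small maps $(\ct{E}[\mathbf{CZF}], \smallmap{S}[\mathbf{CZF}])$ constructed in \refrema{syntcateg}. By \refprop{validinsyntcat}, this class of small maps is representable and satisfies \textbf{(PE)} and \textbf{(WE)}, and moreover $V[\mathbf{CZF}] = \{x : x = x\}$ is the initial $\spower$-algebra with $V[\mathbf{CZF}] \models \phi \Leftrightarrow \mathbf{CZF} \vdash \phi$. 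Since \textbf{CZF} proves Infinity and Fullness, \refprop{convforsyntcat} gives that $\smallmap{S}[\mathbf{CZF}]$ also satisfies \textbf{(NS)} and \textbf{(F)}.

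Next I would pass to the exact completion
\[
(\ct{E}, \smallmap{S}) := (\overline{\ct{E}[\mathbf{CZF}]}, \overline{\smallmap{S}[\mathbf{CZF}]})
\]
via \reftheo{existexcompl}. To verify that this is a predicative category with small maps satisfying \textbf{(NS)} and \textbf{(F)}, I would collect the relevant stability results from Section 6: representability is preserved by \refprop{presrepr}; \textbf{($\Pi$E)} (together with \textbf{(PE)}) and \textbf{(WE)} are preserved under exact completion of a representable class satisfying \textbf{($\Pi$E)} by \refprop{presPiE} and \reftheo{transfWE}; \textbf{(NS)} by \refprop{presnno}; and \textbf{(F)} by \refprop{presofFullness}. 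Bounded exactness is automatic for the completion. Hence $(\ct{E}, \smallmap{S})$ is a predicative category with small maps in which \smallmap{S} satisfies \textbf{(NS)} and \textbf{(F)}.

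Then I would identify the initial \spower-algebra in $(\ct{E}, \smallmap{S})$ with ${\bf y} V[\mathbf{CZF}]$, arguing exactly as in the proof of \reftheo{complforRST}: the embedding ${\bf y}$ commutes with $\spower$ (by \refprop{presPE}) and is bijective on subobjects (\reftheo{existexcompl}(3)), so ${\bf y} V[\mathbf{CZF}]$ is still a fixed point for $\spower$ and still has no proper \spower-subalgebras; by the characterisation \reftheo{charinitpsalg} it is therefore the initial \spower-algebra $V$ in \ct{E}. Since ${\bf y}$ is a Heyting functor that is bijective on subobjects, it preserves and reflects the interpretation of every set-theoretic formula, so for every sentence $\phi$,
\[
V \models \phi \;\Longleftrightarrow\; V[\mathbf{CZF}] \models \phi \;\Longleftrightarrow\; \mathbf{CZF} \vdash \phi,
\]
which is the completeness half. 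The ``therefore'' part then follows immediately: soundness (\refcoro{soundness}, applied with \textbf{(NS)} and \textbf{(F)}) gives the converse direction uniformly in any predicative category with small maps whose class of small maps satisfies \textbf{(NS)} and \textbf{(F)}.

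No step should present a serious obstacle, since all the heavy lifting has been done in Sections 5--7. The main point to be careful about is simply bookkeeping: one must check that each axiom needed to call $(\ct{E}, \smallmap{S})$ a \emph{predicative} category with small maps (bounded exactness, representability, \textbf{($\Pi$E)}, \textbf{(WE)}) does indeed survive the exact completion under the hypotheses we start with, and that the additional axioms \textbf{(NS)} and \textbf{(F)} do so as well. The mildest subtlety is that the preservation of \textbf{(WE)} in \reftheo{transfWE} (and correspondingly of \textbf{(NE)}/\textbf{(NS)} via \refprop{presnno}) requires the class of display maps to be representable and to satisfy \textbf{($\Pi$E)}---both conditions which $\smallmap{S}[\mathbf{CZF}]$ indeed satisfies by \refprop{validinsyntcat}.
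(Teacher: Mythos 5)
Your proposal is correct and matches the paper's intended argument: the paper derives this corollary exactly by combining \reftheo{complforRST} (exact completion of the syntactic category, with ${\bf y}V[{\bf T}]$ identified as the initial $\spower$-algebra) with \refprop{convforsyntcat} for {\bf (NS)} and {\bf (F)}, the Section 6 preservation results, and \refcoro{soundness} for the converse direction. Nothing is missing.
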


\begin{rema}{othercomplths}
Completeness theorems of this kind have been proved by various authors, starting with Simpson in \cite{simpson99} (for {\bf IZF}) and Awodey \emph{et al.}~in \cite{awodeyetal04}. Subsequently, predicative versions were proved in \cite{awodeywarren05} and \cite{gambino05}.

Our results improve on these in two respects: firstly, we obtain a completeness theorem for the set theory {\bf CZF}; secondly, we show completeness for both {\bf IZF} and {\bf CZF} with respect to \emph{exact} categories with small maps.
\end{rema}

\appendix

\section{Set-theoretic axioms}

Set theory is a first-order theory with one non-logical binary relation symbol $\epsilon$. Since we are concerned constructive set theories in this paper, the underlying logic will be intuitionistic.

As is customary also in classical set theories like {\bf ZF}, we will use the abbreviations $ \exists x \epsilon a \, (\ldots)$ for $ \exists x \, (x \epsilon a \land \ldots)$, and $\forall x \epsilon a \, (\ldots)$ for $ \forall x \, (x \epsilon a \rightarrow \ldots)$. Recall also that a formula is called \emph{bounded}, when all the quantifiers it contains are of one of these two forms. Finally, a formula of the form $\forall x \epsilon a \, \exists y \epsilon b \, \phi \land \forall y \epsilon b \, \exists x \epsilon a \, \phi$ will be abbreviated as:
\[ \mbox{B}(x \epsilon a, y \epsilon b) \, \phi. \]

Both {\bf IZF} and {\bf CZF} are extensions of the following basic set of axioms, which for convenience we have given a name: {\bf RST}.
\begin{description}
\item[Extensionality:] $\forall x \, ( \, x \epsilon a \leftrightarrow x \epsilon b \, ) \rightarrow a = b$.
\item[Empty set:] $\exists x  \, \forall y  \, \lnot y \epsilon x $.
\item[Pairing:] $\exists x \, \forall y \, (\,  y \epsilon x \leftrightarrow y = a \lor y = b \, )$.
\item[Union:] $\exists x  \, \forall y \, ( \, y \epsilon x \leftrightarrow \exists z \epsilon a \, y \epsilon z  \, )$.
\item[Set induction:] $\forall x \, (\forall y  \epsilon x \, \phi(y) \rightarrow \phi(x)) \rightarrow \forall x \, \phi(x)$.
\item[Bounded separation:] $\exists x \,  \forall y \, ( \, y \epsilon x \leftrightarrow y \epsilon a \land \phi(y) \, ) $, for any bounded formula $\phi$ in which $a$ does not occur.
\item[Strong collection:] $\forall x \epsilon a \, \exists y \, \phi(x,y) \rightarrow \exists b \, \mbox{B}(x \epsilon a, y \epsilon b) \, \phi$.
\end{description}

The intuitionistic set theory {\bf IZF} is obtained by adding to the axioms of {\bf RST} the following:
\begin{description}
\item[Infinity:] $\exists a \, ( \, \exists  x \, x \epsilon a \, ) \land ( \, \forall x  \epsilon a \, \exists y \epsilon a \, x \epsilon y \, )$.
\item[Full separation:] $\exists x \,  \forall y \, ( \, y \epsilon x \leftrightarrow y \epsilon a \land \phi(y) \, ) $, for any formula $\phi$ in which $a$ does not occur.
\item[Power set:] $\exists x \, \forall y \, ( \, y \epsilon x \leftrightarrow y \subseteq a \, )$, where $y \subseteq a$ abbreviates $\forall z \, ( z \epsilon y \rightarrow z \epsilon a)$.
\end{description}

The set theory {\bf CZF}, introduced by Aczel in \cite{aczel78}, is obtained by adding to {\bf RST} the Infinity axiom, as well as a weakening of the Power set axiom called Subset collection:
\begin{description}
\item[Subset collection:] $\exists c \, \forall z \, ( \forall x \epsilon a \, \exists y \epsilon b \, \phi(x,y,z) \rightarrow \exists d \epsilon c \, \mbox{B}(x \epsilon a, y \epsilon d) \, \phi(x, y, z)) $.
\end{description}

\section{Positive Heyting categories}

\begin{defi}{cartesian}
A category \ct{C} is called \emph{cartesian}, when it possesses all finite limits. A functor is \emph{cartesian}, when it preserves finite limits.
\end{defi}

\begin{defi}{regular}
A map \func{f}{B}{A} in a category \ct{C} is called a \emph{cover}, if for any factorisation $f = mg$ in which $m$ is a monomorphism, $m$ is in fact an isomorphism. A cartesian category \ct{C} is called \emph{regular}, when every map factors as a cover followed by a monomorphism, and covers are stable under pullback. A functor is \emph{regular}, when it is cartesian and preserves covers.
\end{defi}

The following lemma about regular categories does not seem to be as well known as it should be:

\begin{lemm}{pspastinregcat}
Consider following commutative diagram
\diag{ A \ar[d] \ar@{->>}[r] &  B \ar[d] \ar[r] & C \ar[d] \\
X \ar@{->>}[r]_{p} & Y \ar[r] & Z}
in a regular category \ct{C}, where $p$ is a cover, as indicated. When the entire diagram is a pullback, and the left-hand square as well, then so is the right-hand square.
\end{lemm}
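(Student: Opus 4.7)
The plan is to form the pullback $P = Y \times_Z C$ of the right square together with the canonical comparison map $e \colon B \to P$ induced by its commutativity, and to show that $e$ is an isomorphism. First I would pull everything back along $p \colon X \to Y$: the left square being a pullback gives $p^*B \cong A$, and pullback pasting combined with the outer rectangle being a pullback gives $p^*P = X \times_Y (Y \times_Z C) \cong X \times_Z C \cong A$. Under these identifications the pulled-back comparison $p^*e \colon A \to A$ is the identity, hence in particular an isomorphism.

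Next I would show that $e$ is monic. Given $a, b \colon T \to B$ with $ea = eb$, both share the common composite $T \to B \to Y$. Pulling $p$ back along this composite yields a cover $\pi \colon T' \to T$, and the universal property of the left pullback lifts $a$ and $b$ to maps $a', b' \colon T' \to A$. Applying $p^*$ to $ea = eb$ gives $p^*e \circ a' = p^*e \circ b'$, whence $a' = b'$ since $p^*e$ is an iso. Post-composing with $A \to B$ yields $a\pi = b\pi$, and since $\pi$ is epic, $a = b$.

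To show that $e$ is also a cover, I would factor $e = m \circ f$ with $f$ a cover and $m$ a monomorphism. Pullback along $p$ preserves covers and monos in a regular category, so $p^*e = p^*m \circ p^*f$ is a cover-then-mono factorization of an isomorphism, forcing both $p^*f$ and $p^*m$ to be isomorphisms. Viewing $m$ as a subobject of $P$ and pulling back along the cover $\tilde p \colon X \times_Y P \to P$ obtained from $p$ by stability of covers, the pullback $\tilde p^* m$ coincides with $p^*m$ and so equals the maximal subobject of $X \times_Y P$. Since pullback along a cover is injective on subobjects in a regular category, $m$ itself is the maximal subobject of $P$, so $m$ is an iso and $e$ is a cover. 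Being both monic and a cover in a regular category, $e$ is an isomorphism.

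The main obstacle is the subobject-reflection step, which rests on the standard regular-category fact that any factorization of a cover through a mono forces the mono to be iso; this implies that pullback along a cover is order-reflecting (indeed injective) on subobjects, and it is exactly this principle that lets one descend ``$p^*m$ is iso'' back to ``$m$ is iso''. Once this tool is in hand, everything else is a routine diagram chase driven by the two pullback hypotheses.
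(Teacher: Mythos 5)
Your proof is correct. The paper itself gives no argument here---it simply cites Menni's thesis---so there is nothing internal to compare against; what you have written is the standard descent argument: form the comparison map $e\colon B \to Y\times_Z C$, observe that its base change along the cover $p$ is an isomorphism (by pasting of pullbacks and the two pullback hypotheses), and then descend ``iso'' along $p$ by separately reflecting ``mono'' (using that covers are epic, after lifting test maps along a pulled-back cover) and ``cover'' (using that pullback along a cover reflects the maximal subobject, which is immediate from the definition of cover: if $\tilde p^*m$ is invertible then $\tilde p$ factors through the mono $m$, forcing $m$ to be invertible). All the regular-category facts you invoke---stability of covers under pullback, covers being epimorphisms in the presence of equalizers, and the uniqueness of image factorizations---are available in the paper's setting, so the argument goes through as written.
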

\begin{proof} See \cite[p.~40]{menni00}.
\end{proof}

\begin{defi}{coherent}
A regular category \ct{C} is called \emph{coherent}, when for each object $X$ in \ct{C} the subobject lattice ${\rm Sub}(X)$ has finite joins, which are, moreover, stable under pulling back along morphisms \func{f}{Y}{X}.
\end{defi}

\begin{defi}{Heyting}
A coherent category \ct{C} is called \emph{Heyting}, when for each morphism \func{f}{Y}{X} the functor
\[ \func{f^*}{{\rm Sub}(X)}{{\rm Sub}(Y)} \]
induced by pullback, has a right adjoint $\forall_f$.
\end{defi}

Heyting categories are rich enough to admit a sound interpretation of  first-order intuitionistic logic. This interpretation of first-order logic is called the \emph{internal logic} of Heyting categories. In this paper, we assume the reader is familiar with this internal logic (if not, see \cite{makkaireyes77}) and frequently exploit it.

\begin{defi}{lextensive}
A cartesian category \ct{C} is called \emph{lextensive} or \emph{positive}, when it has finite sums, which are disjoint and stable .
\end{defi}

Observe that a category that is positive and regular is automatically coherent. Therefore we can axiomatise our basic notion of a positive Heyting category as follows: \ct{E} is a positive Heyting category, when
\begin{enumerate}
\item it is cartesian, i.e., it has finite limits.
\item it is regular, i.e., morphisms factor in a stable fashion as a cover followed by a monomorphism.
\item it is positive, i.e., it has finite sums, which are disjoint and stable.
\item it is Heyting, i.e., for any morphism \func{f}{Y}{X} the induced pullback functor \func{f^*}{{\rm Sub}(X)}{{\rm Sub}(Y)} has a right adjoint $\forall_f$.
\end{enumerate}

\bibliographystyle{plain} \bibliography{ast}

\end{document}